\newcommand{\tnorm}{\@ifstar\@tnorms\@tnorm}
\newcommand{\@tnorms}[1]{%
  \left|\mkern-1.5mu\left|\mkern-1.5mu\left|
   #1
  \right|\mkern-1.5mu\right|\mkern-1.5mu\right|
}
\newcommand{\@tnorm}[2][]{%
  \mathopen{#1|\mkern-1.5mu#1|\mkern-1.5mu#1|}
  #2
  \mathclose{#1|\mkern-1.5mu#1|\mkern-1.5mu#1|}
}
\newcommand{\jump}[1]{\llbracket #1 \rrbracket}
\newcommand{\av}[1]{\{\!\!\{#1\}\!\!\}}
\newtheorem{theorem}{Theorem}
\newtheorem{lemma}{Lemma}
\newtheorem{remark}{Remark}
\title{A hybridizable discontinuous Galerkin method for the coupled
  Navier--Stokes and Darcy problem}
\author{A. Cesmelioglu\thanks{Department of Mathematics and
    Statistics, Oakland University, MI, USA
    (\url{cesmelio@oakland.edu}),
    \url{https://orcid.org/0000-0001-8057-6349}} \and
  S. Rhebergen\thanks{Department of Applied Mathematics, University of
    Waterloo, ON, Canada (\url{srheberg@uwaterloo.ca}),
    \url{http://orcid.org/0000-0001-6036-0356}}}
\begin{document}
\maketitle
\begin{abstract}
  We present and analyze a strongly conservative hybridizable
  discontinuous Galerkin finite element method for the coupled
  incompressible Navier--Stokes and Darcy problem with
  Beavers--Joseph--Saffman interface condition. An \emph{a priori}
  error analysis shows that the velocity error does not depend on the
  pressure, and that velocity and pressure converge with optimal
  rates. These results are confirmed by numerical examples.
\end{abstract}
\section{Introduction}
\label{sec:introduction}

We consider the solution of the coupled incompressible Navier--Stokes
and Darcy problem with Beavers--Joseph--Saffman interface
condition. The Navier--Stokes equations describe the motion of a
freely flowing incompressible fluid in one sub-region of the
domain. These equations are coupled by the Beavers--Joseph--Saffman
interface condition to the Darcy equations that describe the flow of a
fluid in porous media, the second sub-region of our domain. For the
analysis and applications of these equations we refer to
\cite{Discacciati:2009}.

A conforming finite element method for the coupled
Navier--Stokes/Darcy problem was proposed and analyzed by Badea et
al. \cite{Badea:2010}, where they consider a Taylor--Hood
discretization of the Navier--Stokes equations and use quadratic
Lagrangian elements for the primal formulation of the Darcy
equation. Girault and Rivi\`ere \cite{Girault:2009} consider a primal
form of the Darcy equations coupled to the velocity-pressure
formulation of the Navier--Stokes equations. They propose a
discontinuous Galerkin (DG) method based on an upwind Lesaint--Raviart
DG discretization of the convective terms in the Navier--Stokes
equations and non-symmetric, symmetric, and incomplete interior
penalty Galerkin methods for the diffusion terms in the Navier--Stokes
equations and for the primal form of the Darcy equation. Later, using
the dual-mixed formulation of the Darcy equation, a conforming mixed
finite element method was proposed by Discacciati and Oyarz\'ua
\cite{Discacciati:2017}. They use Bernardi--Raugel and Raviart--Thomas
elements for the velocities, piecewise constants for the pressures,
and continuous piecewise linear elements for the Lagrange-multiplier
used to couple the Navier--Stokes and Darcy equations. Extensions of
this work include a conforming mixed finite element method for the
dual mixed formulations of both the Navier--Stokes (with non-linear
viscosity) and Darcy equations \cite{Caucao:2017}, and a conforming
mixed finite element method for the Navier--Stokes/Darcy--Forchheimer
problem \cite{Caucao:2020}. We further mention that a DG
discretization of the Navier--Stokes and dual-mixed formulation of the
Darcy equation was proposed (but not analyzed) in \cite{Girault:2013}
while analysis and finite element formulations of the transient
Navier--Stokes/Darcy problem is addressed, for example, in
\cite{Cesmelioglu:2008,Cesmelioglu:2009,Chaabane:2017,Chidyagwai:2009,Chidyagwai:2010}.

In this paper, we are particularly interested in strongly conservative
discretizations as they can be shown to be \emph{pressure-robust}
\cite{Linke:2012,Linke:2014}, i.e., the velocity error can be shown to
be independent of the best approximation error of pressure scaled by
the inverse of the viscosity. One approach to obtain a pressure-robust
discretization is by using divergence-conforming velocity spaces
\cite{John:2017}. Divergence-conforming DG methods have been
introduced for the Stokes and Navier--Stokes equations in
\cite{Cockburn:2007a,Wang:2007}. However, DG methods are known to be
expensive. As a remedy, hybridizable discontinuous Galerkin (HDG)
methods were introduced in \cite{Cockburn:2009a} to improve the
computational efficiency of traditional DG methods through
hybridization. Recently, divergence-conforming HDG methods have been
introduced for the Stokes and Navier--Stokes equations, e.g.,
\cite{Cockburn:2014b,Fu:2019,Lehrenfeld:2016,Rhebergen:2018a}.

For the Stokes--Darcy problem a strongly conservative DG
discretization was proposed by Girault, Kanschat and Rivi\`ere
\cite{Girault:2014,Kanschat:2010} using divergence-conforming velocity
spaces. A strongly conservative HDG method, using similar spaces, was
later proposed by Fu and Lehrenfeld \cite{Fu:2018} for the same
problem. We, however, are not aware of divergence-conforming DG or HDG
methods for the coupled Navier--Stokes/Darcy problem.

In \cite{Cesmelioglu:2020}, we presented a strongly conservative
discretization of the velocity-pressure formulation of the Stokes
equations coupled to the dual-mixed formulation of the Darcy
equations. In this paper we extend this approach to the coupled
Navier--Stokes/Darcy problem. The problem is formulated in
\cref{sec:navierstokesdarcy} while existence and uniqueness of a
solution to this problem is discussed in \cref{s:existuniq_wf}. The
HDG method is proposed in \cref{sec:hdgnavierstokesdarcy} where we
also discuss well-posedness of this discretization. We present an a
priori error analysis of the method in \cref{s:errorAnalysis} where we
prove optimal (pressure-robust) rates of convergence in the energy
norm. Numerical examples in \cref{sec:numerical_examples} serve to
verify our theoretical results and conclusions are drawn in
\cref{ss:conclusion}.

\section{The coupled Navier--Stokes and Darcy problem}
\label{sec:navierstokesdarcy}

We consider the coupled Navier--Stokes and Darcy problem on a bounded
two ($\dim=2$) or three ($\dim=3$) dimensional domain
$\Omega \subset \mathbb{R}^{\dim}$ which is decomposed into two
disjoint domains $\Omega^s$ and $\Omega^d$. Fluid flow in the free
fluid region $\Omega^s$ is modeled by the Navier--Stokes equations
while in the porous region $\Omega^d$ fluid flow is modeled by the
Darcy equations. This coupled system of equations is given by
\begin{subequations}
  \label{eq:system}
  \begin{align}
    \label{eq:momentum}
    \nabla \cdot(u \otimes u) + \nabla \cdot \sigma &= f^s && \text{in } \Omega^s,
    \\
    \label{eq:d_velocity}
    \mu\kappa^{-1}u + \nabla p &= 0 & & \text{in } \Omega^d,
    \\
    \label{eq:mass}
    -\nabla\cdot u &= \chi^df^d & & \text{in } \Omega,
  \end{align}
\end{subequations}
where $u$ is the fluid velocity, $p$ denotes the kinematic pressure in
$\Omega^s$ and the piezometric head in $\Omega^d$,
$\sigma := p\mathbb{I} - 2\mu\varepsilon(u)$ is the diffusive part of
the fluid momentum flux in $\Omega^s$,
$\varepsilon(u) := \del[0]{\nabla u + (\nabla u)^T}/2$ is the strain
rate tensor, $\mu > 0$ is the constant kinematic viscosity, $f^s$ a
body force, $f^d$ is a source/sink term, and $\kappa$ is a positive
definite symmetric matrix corresponding to the permeability of
$\Omega^d$. We will assume that there exist positive constants
$\kappa_{\min}$ and $\kappa_{\max}$ such that
\begin{equation*}
  \forall z \in \mathbb{R}^{\dim},\
  \kappa_{\min}|z|^2 \le (\kappa(x) z)\cdot z \le \kappa_{\max}|z|^2, \quad x \in \Omega^d.
\end{equation*}

The boundary of the domain $\Omega$, $\partial \Omega$, is assumed to
be a polyhedral Lipschitz boundary. The boundaries of $\Omega^j$ are
denoted by $\partial\Omega^j$, where $j=s,d$. The interface
$\Gamma^I := \partial \Omega^s \cap \partial \Omega^d$ is assumed to
be Lipschitz polyhedral. We furthermore define the exterior boundaries
$\Gamma^j := \partial\Omega^j \backslash \Gamma^I$ for $j=s,d$. On
$\Gamma^j$, we denote by $n$ the outward unit normal to $\Omega^j$
while on $\Gamma^I$, $n$ denotes the unit normal vector pointing
outward from $\Omega^s$. On $\Gamma^I$ we further define the
orthonormal tangential vectors $\tau^k$ for $1 \le k \le \dim-1$.

Let $\chi^d$ be the characteristic function that has the value 1 in
$\Omega^d$ and 0 in $\Omega^s$ and let $\chi^s = 1-\chi^d$. We define
$u^j = \chi^ju$ and $p^j=\chi^jp$ for $j=s,d$. The Navier--Stokes and
Darcy equations are coupled at the interface by assuming continuity of
the normal component of the velocity, the Beavers--Joseph--Saffman law
\cite{Beavers:1967, Saffman:1971}, and a balance of forces. These
assumptions result in the following transmission conditions on
$\Gamma^I$:
\begin{subequations}
  \label{eq:interface}
  \begin{align}
    \label{eq:bc_I_u}
    u^s\cdot n &= u^d \cdot n & & \text{on } \Gamma^I,
    \\
    \label{eq:bc_I_slip}
    -2\mu\del[0]{\varepsilon(u^s)n} \cdot \tau^i &= \frac{\alpha \mu}{\sqrt{\kappa_i}} u^s \cdot \tau^i
                                                   \quad,\ 1\le i \le \dim -1
                              & & \text{on } \Gamma^I,
    \\
    \label{eq:bc_I_p}
    (\sigma n)\cdot n &= p^d & & \text{on } \Gamma^I,
  \end{align}
\end{subequations}
where $\kappa_i = \tau^i \cdot (\kappa \tau^i)$ and $\alpha > 0$ is an
experimentally determined dimensionless constant. To complete the
problem description, we impose the following exterior boundary
conditions:
\begin{equation}
  \label{eq:bcs_simple}
  u = 0 \text{ on } \Gamma^s
  \quad \text{ and } \quad
  u\cdot n = 0 \text{ on } \Gamma^d.  
\end{equation}
Finally, for well-posedness of the problem, we require
$\int_{\Omega^d}f^d\dif x=0$ and $\int_{\Omega}p \dif x = 0$.

\section{Existence and uniqueness of solutions to a mixed weak formulation}
\label{s:existuniq_wf}

Let
\begin{equation}
  \label{eq:functionspaceX}
  \mathcal{X} := \cbr[0]{ u = (u^s, u^d) \in \mathcal{X}^s \times \mathcal{X}^d:\ u^s \cdot n = u^d \cdot n \text{ on } \Gamma^I},
\end{equation}
where
$\mathcal{X}^s := \cbr[0]{v \in \sbr[0]{H^1(\Omega^s)}^{\dim}:\ v=0
  \text{ on } \Gamma^s}$,
$\mathcal{X}^d := \cbr[0]{v \in H(\text{div};\Omega^d):\ v \cdot n = 0
  \text{ on } \Gamma^d}$, and endow $\mathcal{X}$ with the product
norm
$\norm{u}_{\mathcal{X}} := ( \envert[0]{u^s}_{1,\Omega^s}^2 +
\norm[0]{u^d}_{\text{div};\Omega^d}^2)^{1/2}$ for all
$u \equiv (u^s, u^d) \in \mathcal{X}$, where
$\norm[0]{u^d}_{\text{div};\Omega^d}^2 := \norm[0]{u^d}_{\Omega^d}^2 +
\norm[0]{\nabla \cdot u^d}_{\Omega^d}^2$. Since the
$\norm[0]{\cdot}_{\text{div};\Omega^d}$-norm is only applied to
functions on $\Omega^d$, we will drop the subscript $\Omega^d$ and
write $\norm[0]{\cdot}_{\text{div}}$.

The following mixed weak formulation for
\cref{eq:system,eq:interface,eq:bcs_simple} was proposed in
\cite[Section 3.3]{Girault:2013}: Find
$(u,p) \in \mathcal{X} \times \mathcal{Q}$, with
$\mathcal{Q}:=L_0^2(\Omega)$, such that:
\begin{equation}
  \label{eq:weakform}
  a(u; u,v) + b(v,p) + b(u,q) = \ell^s(v) + \ell^d(q)
  \quad \forall (v, q) \in \mathcal{X} \times \mathcal{Q},
\end{equation}
where $\ell^s(v) := \int_{\Omega^s}f^s \cdot v \dif x$,
$\ell^d(q) := \int_{\Omega^d} f^d q \dif x$, and where the forms
$a: \mathcal{X} \times \mathcal{X} \times \mathcal{X} \to \mathbb{R}$
and $b: \mathcal{X} \times \mathcal{Q} \to \mathbb{R}$ are defined as:
\begin{equation*}
  a(w; u, v)
  := t(w; u^s, v^s) + a^s(u^s, v^s) + a^d(u^d, v^d) + a^I(u^s, u^s),
  \quad
  b(v, q)
  := - \int_{\Omega} q \nabla \cdot v \dif x,  
\end{equation*}
with
\begin{align*}
  a^s(u, v)
  &:= \int_{\Omega^s} 2\mu \varepsilon(u) : \varepsilon(v) \dif x,
  & a^d(u, v)
  &:= \int_{\Omega^d} \mu\kappa^{-1} u \cdot v \dif x,
  \\
  a^I(u, v)
  &:= \int_{\Gamma^I} \sum_{i=1}^{\dim-1} \frac{\alpha \mu}{\sqrt{\kappa_i}} (u\cdot \tau^i) (v \cdot \tau^i) \dif s,
  & t(w;u,v)
  &:= \int_{\Omega^s} (w \cdot \nabla u) \cdot v \dif x.
\end{align*}  

Well-posedness for the coupled Navier--Stokes and Darcy problem was
shown in \cite{Discacciati:2017} for the case that the source/sink
term $f^d$ in \cref{eq:mass} is zero. With minor modifications of
these proofs, well-posedness can be shown also for the case
$f^d \ne 0$. In particular, it can be shown that if the data
$f^s \in \sbr[0]{L^2(\Omega^s)}^{\dim}$ and $f^d \in L^2(\Omega^d)$
satisfy the smallness condition
\begin{equation}
  \label{eq:uniquenessrequirement-wf}
  C_p\norm[0]{f^s}_{\Omega^s} + 2\mu C_fC_{bb}^{-1}\norm[0]{f^d}_{\Omega^d}
  < \mu^2C_{ae} \min\del{C_{ae}C_w^{-1}, 2 C_{ae}^s\delta C_{si,2}^{-1}C_{si,4}^{-2}},
\end{equation}
then there exists a unique solution
$(u,p) \in \mathcal{X} \times \mathcal{Q}$ to
\cref{eq:weakform}. Moreover, this solution satisfies
\begin{subequations}
  \label{eq:bounduXpQ-wf}
  \begin{align}
    \label{eq:bounduXpQ-wf-a}
    \norm{u}_{\mathcal{X}}
    &\le C_{ae}^{-1}\del[1]{ \mu^{-1}C_p\norm[0]{f^s}_{\Omega^s} + 2 C_{f}C_{bb}^{-1}\norm[0]{f^d}_{\Omega^d} },
    \\
    \label{eq:bounduXpQ-wf-b}
    \norm{p}_{\Omega}
    &\le 2C_{f}(C_{ae}C_{bb})^{-1}\del[1]{ C_p\norm[0]{f^s}_{\Omega^s} + C_{f}C_{bb}^{-1}\mu \norm[0]{f^d}_{\Omega^d} }.
  \end{align}
\end{subequations}
We remark that $C_f$ and $C_{ae}$ are the constants related to,
respectively, the boundedness and coercivity of
$a(\cdot;\cdot,\cdot)$, which are given by:
\begin{equation*}
  \begin{split}
    C_{f} &= \max\del[0]{2 + C_{ac}^I\alpha \kappa_{\min}^{-1/2} + 2C_w
      C_{ae}^s\delta C_{si,2}^{-1}C_{si,4}^{-2}, \kappa_{\min}^{-1}},
    \\
    C_{ae} & = \min(C_{ae}^s(1-\delta), \kappa_{\max}^{-1}).
  \end{split}
\end{equation*}
All other constants are independent of $\kappa_{\min}$,
$\kappa_{\max}$, $\alpha$, and $\mu$. Here $C_{ae}^s$ and $C_{ac}^I$
are related to, respectively, the coercivity constant of $a^s$ and the
continuity constant of $a^I$. Furthermore, $C_{bb}$ is the inf-sup
constant of $b(\cdot, \cdot)$, $C_p$ is the Poincar\'e constant, $C_w$
is a constant related to the dimension of the problem and the Sobolev
embedding constant from $H^1(\Omega^s)$ into $L^4(\Omega^s)$, $\delta$
is a constant that lies in $(0,1)$, and $C_{si,2}$ and $C_{si,4}$ are
constants relating, respectively, the $L^2$- and $L^4$-norms on the
interface to the $H^1$-norm on $\Omega^s$.

\section{The hybridizable discontinuous Galerkin method}
\label{sec:hdgnavierstokesdarcy}

\subsection{The discretization}
\label{ss:discretization}

To define the discretization we first introduce the triangulations
$\mathcal{T}^j$ of $\Omega_j$, with $j=s,d$. We assume these
triangulations consist of shape-regular simplices $K$ and that the two
triangulations $\mathcal{T}^j$, $j=s,d$, coincide on the interface
$\Gamma^I$. We further denote by
$\mathcal{T} := \mathcal{T}^s \cup \mathcal{T}^d$ the triangulation of
$\Omega$ and we define $h:= \max_{K\in\mathcal{T}}h_K$, where $h_K$ is
the diameter of $K$.

On the cells $K$ we define the discontinuous finite element spaces
\begin{equation*}
  \label{eq:DGFEMspacesdef}
  \begin{split}
    X_h &:= \cbr[1]{v_h \in \sbr[0]{L^2(\Omega)}^{\dim} : \ v_h \in
      \sbr[0]{P_k(K)}^{\dim}, \ \forall\ K \in \mathcal{T}},
    \\
    X_h^j &:= \cbr[1]{v_h \in \sbr[0]{L^2(\Omega^j)}^{\dim} : \ v_h \in
      \sbr[0]{P_k(K)}^{\dim}, \ \forall\ K \in \mathcal{T}^j}, \quad j=s,d,
    \\
    Q_h &:= \cbr[1]{q_h \in L^2_0(\Omega) : \ q_h \in P_{k-1}(K) ,\
      \forall \ K \in \mathcal{T}},
    \\
    Q_h^j &:= \cbr[1]{q_h \in Q_h : \ q_h \in P_{k-1}(K) ,\
      \forall \ K \in \mathcal{T}^j}, \quad j=s,d,
  \end{split}
\end{equation*}
where $P_l(K)$ denotes the space of polynomials of degree $l$ on any
cell $K$.

By $\mathcal{F}^j$ and $\Gamma_0^j$ we denote the set and union of
facets $F$ on the subdomain $\overline{\Omega}^j$, $j=s,d$. By
$\mathcal{F}$ and $\Gamma_0$ we denote the set and union of all facets
in $\overline{\Omega}$ while $\mathcal{F}^I$ denotes the set of all
facets on $\Gamma^I$. Then, denoting by $P_m(F)$ the space of
polynomials of degree $m$ on any facet $F$, we define the following
facet finite element spaces:
\begin{equation*}
  \label{eq:HDGFEMspacesdef}
  \begin{split}
    \bar{X}_h &:= \cbr[1]{\bar{v}_h \in \sbr[0]{L^2(\Gamma_0^s)}^{\dim}:\
      \bar{v}_h \in \sbr[0]{P_{k}(F)}^{\dim}\ \forall\ F \in \mathcal{F}^s,\
      \bar{v}_h = 0 \text{ on } \Gamma^s},
    \\
    \bar{Q}_h^j &:= \cbr[1]{\bar{q}_h^j \in L^2(\Gamma_0^j) : \ \bar{q}_h^j
      \in P_{k}(F) \ \forall\ F \in \mathcal{F}^j},\quad j=s,d.    
  \end{split}
\end{equation*}
Grouping the cell and facet unknowns in the following compact
notation:
\begin{align*}
  \boldsymbol{v}_h
  &:= (v_h, \bar{v}_h) \in \boldsymbol{X}_h := X_h \times \bar{X}_h,
  &
  \boldsymbol{v}_h^s
  &:= (v_h^s, \bar{v}_h) \in \boldsymbol{X}_h^s := X_h^s \times \bar{X}_h,
  \\
  \boldsymbol{q}_h^{j}
  &:= (q_h, \bar{q}_h^j) \in \boldsymbol{Q}_h^{j} := Q_h^j \times \bar{Q}_h^j,\ j=s,d,
  &
  \boldsymbol{q}_h
  &:= (q_h, \bar{q}_h^s, \bar{q}_h^d) \in \boldsymbol{Q}_h := Q_h \times \bar{Q}_h^s \times \bar{Q}_h^d,
\end{align*}
we propose the following HDG discretization for
\cref{eq:system,eq:interface,eq:bcs_simple}: Find
$(\boldsymbol{u}_h, \boldsymbol{p}_h) \in \boldsymbol{X}_h \times
\boldsymbol{Q}_h$ such that for all
$(\boldsymbol{v}_h, \boldsymbol{q}_h) \in \boldsymbol{X}_h \times
\boldsymbol{Q}_h$
\begin{equation}
  \label{eq:hdg}
  a_h(u_h; \boldsymbol{u}_h, \boldsymbol{v}_h) + b_h(\boldsymbol{v}_h, \boldsymbol{p}_h) + b_h(\boldsymbol{u}_h, \boldsymbol{q}_h)
  = \ell^s(v_h) + \ell^d(q_h),  
\end{equation}
where
\begin{subequations}
  \begin{align}
    \label{eq:def_ah}
    a_h(w; \boldsymbol{u}, \boldsymbol{v})
    :=& t_h(w; \boldsymbol{u}, \boldsymbol{v})
        + a_h^s(\boldsymbol{u}, \boldsymbol{v})
        + a^d(u, v) + a^I(\bar{u}, \bar{v}),
    \\
    \label{eq:ah_s}
    a_h^s(\boldsymbol{u}, \boldsymbol{v})
    :=&
        \sum_{K\in\mathcal{T}^s} \int_K 2\mu \varepsilon(u) : \varepsilon(v) \dif x
        +\sum_{K\in\mathcal{T}^s} \int_{\partial K} \frac{2\beta\mu}{h_K}(u-\bar{u}) \cdot (v-\bar{v}) \dif s
    \\
    \nonumber
      &-\sum_{K\in\mathcal{T}^s} \int_{\partial K} 2\mu\varepsilon(u)n \cdot (v-\bar{v})\dif s
        -\sum_{K\in\mathcal{T}^s} \int_{\partial K} 2\mu\varepsilon(v)n \cdot (u-\bar{u})\dif s,
    \\
    \label{eq:oh}
    t_h(w; \boldsymbol{u}, \boldsymbol{v})
    :=&
        -\sum_{K\in\mathcal{T}^s}\int_{K} u\otimes w : \nabla v \dif x
        +\sum_{K\in\mathcal{T}^s}\int_{\partial K}\tfrac{1}{2}w\cdot n \, (u+\bar{u})\cdot(v-\bar{v}) \dif s
    \\ \nonumber
      &+\sum_{K\in\mathcal{T}^s}\int_{\partial K}\tfrac{1}{2}\envert{w\cdot n}(u-\bar{u})\cdot(v-\bar{v})\dif s
        +\int_{\Gamma^I}(w\cdot n)\bar{u}\cdot \bar{v} \dif s,   
  \end{align}
\end{subequations}
and where
\begin{subequations}
  \begin{align*}
    b_h(\boldsymbol{v}, \boldsymbol{q})
    :=& b_h^s(v, \boldsymbol{q}^s) + b_h^{I,s}(\bar{v}, \bar{q}^s) + b_h^d(v, \boldsymbol{q}^d) + b_h^{I,d}(\bar{v}, \bar{q}^d),
    \\
    b_h^{j}(\boldsymbol{p}^{j},v )
    :=& -\sum_{K\in\mathcal{T}^j} \int_K p \nabla\cdot v \dif x
              + \sum_{K\in\mathcal{T}^j} \int_{\partial K} \bar{p}^j v\cdot n^j \dif s,    
    \qquad
    b_h^{I,j}(\bar{p}^j, \bar{v} )
      := -\int_{\Gamma^I}\bar{p}^j\bar{v}\cdot n^j \dif s,    
  \end{align*}
\end{subequations}
for $j=s,d$. In the above definitions, $\beta > 0$ is a penalty
parameter and $n^j$ is the outward unit normal vector on the boundary
of any element $K \in \mathcal{T}^j$. On the interface $\Gamma^I$,
$n^s = -n^d$. If it is clear to which set $K$ belongs, we drop the
superscript $j$.

The HDG discretization \cref{eq:hdg} is the Navier--Stokes/Darcy
extension of the discretization recently proposed for the coupled
Stokes/Darcy problem in \cite{Cesmelioglu:2020} (where the matrix
$\kappa$ corresponding to the permeability was replaced by a positive
constant). This discretization is strongly conservative, a property
inherited by the HDG discretization of the Navier--Stokes/Darcy
discretization. Indeed, the velocity solution $u_h \in X_h$ to
\cref{eq:hdg} satisfies:
\begin{subequations}
  \label{eq:massconservations}
  \begin{align}
    \label{eq:massconservation-1}
    -\nabla \cdot u_h &= \chi^d\Pi_Qf^d && \forall x\in K, \ \forall K \in \mathcal{T},
    \\
    \label{eq:massconservation-2}
    \jump{u_h\cdot n}&=0 && \forall x\in F,\ \forall F\in \mathcal{F}, 
    \\
    \label{eq:massconservation-4}
    u_h\cdot n&=\bar{u}_h\cdot n && \forall x\in F,\ \forall F\in \mathcal{F}^I,
  \end{align}
\end{subequations}
where $\Pi_Q$ is the $L^2$-projection operator into $Q_h$,
$\jump{\cdot}$ is the usual jump operator, and $n$ is the unit normal
vector on~$F$. See \cite[Section 3.3]{Cesmelioglu:2020} for a proof of
\cref{eq:massconservations}. In the following analysis it will be
useful to introduce the following subspaces:
\begin{subequations}
  \begin{align*}
    \boldsymbol{Z}_h^s
    :&= \cbr[1]{ \boldsymbol{v}_h \in \boldsymbol{X}_h^s:\ b_h^s(v_h,\boldsymbol{q}^s_h) + b_h^{I,s}(\bar{v}_h,\bar{q}_h^s)
      = 0 \ \forall \boldsymbol{q}_h^s \in \boldsymbol{Q}_h^s},
    \\
    \boldsymbol{Z}_h
    :&= \cbr[1]{ \boldsymbol{v}_h \in \boldsymbol{X}_h:\
      \sum_{j=s,d}\del[0]{b_h^j(v_h,\boldsymbol{q}^j_h) + b_h^{I,j}(\bar{v}_h,\bar{q}_h^j)}
      = 0 \ \forall \boldsymbol{q}_h \in \boldsymbol{Q}_h}.
  \end{align*}
\end{subequations}
The velocity solution to \cref{eq:hdg}, $\boldsymbol{u}_h$, restricted
to $\Omega^s$ is such that
$\boldsymbol{u}_h^s \in \boldsymbol{Z}_h^s$. Generally, local momentum
conservation of the Navier--Stokes equations needs to be sacrificed to
obtain a stable discretization \cite{Cockburn:2004b}. However, since
$u_h^s$ is exactly divergence-free and $H({\rm div})$-conforming in
$\Omega^s$, this sacrifice is unnecessary and the discretization of
the Navier--Stokes equations in \cref{eq:hdg} is locally momentum
conserving (unlike, for example, the DG method of
\cite{Girault:2009}). Note further that any
$\boldsymbol{v}_h \in \boldsymbol{Z}_h$ satisfies
\cref{eq:massconservations} with $f^d=0$.

\subsection{Notation and extension of known results}
\label{ss:prelim}

Before addressing the well-posedness of the HDG method \cref{eq:hdg},
we briefly introduce notation and extend a few properties of the
discretization previously shown for the coupled Stokes/Darcy problem
in \cite{Cesmelioglu:2020}. On a domain $D$ we will use standard
definitions and notation of the Sobolev spaces $W_p^k(D)$ with
corresponding norms $\norm[0]{\cdot}_{p,k,D}$ (see, for example,
\cite{Adams:book,brenner:book}). If $p=2$ we set $H^k(D) = W_2^k(D)$
and write $\norm[0]{\cdot}_{k,D}$ instead of
$\norm[0]{\cdot}_{2,k,D}$. If $k=0$, we note that $W_p^0(D)$ coincides
with $L^p(D)$. For $p\ne 2$, we denote the norm on $L^p(D)$ by
$\norm{\cdot}_{p,0,D}$. If $p=2$, we denote the norm on $L^2(D)$ by
$\norm{\cdot}_D$.

Let us now introduce the following spaces:
\begin{equation*}
  \begin{split}
    X &:=\cbr[0]{ u = (u^s, u^d) \in X^s \times X^d:\ u^s \cdot n = u^d \cdot n \text{ on } \Gamma^I},
     \\
    Q &:= \cbr[0]{ q \equiv (q^s, q^d):\ q^s \in Q^s:=H^1(\Omega^s),\ q^d \in Q^d:=H^2(\Omega^d),\ \int_{\Omega}q\dif x = 0},
  \end{split}
\end{equation*}
where
$X^s := \cbr[0]{v \in \sbr[0]{H^2(\Omega^s)}^{\dim}:\ v=0 \text{ on }
  \Gamma^s}$ and
$X^d := \cbr[0]{v \in \sbr[0]{H^1(\Omega^d)}^{\dim}:\ v \cdot n = 0
  \text{ on } \Gamma^d}$. We will denote the trace space of $X$ by
$\bar{X}$ and we introduce the trace operator
$\gamma_V: X \to \bar{X}$ which restricts functions in $X$ to
$\Gamma_0^s$. Likewise, the trace space of $Q^j$ is denoted by
$\bar{Q}^j$ and $\gamma_{Q^j} : Q^j \to \bar{Q}^j$ restricts functions
in $Q^j$ to $\Gamma_0^j$, $j=s,d$. If it is clear from the context on
which function spaces the trace operator acts, we drop the subscript
notation from $\gamma$. Using similar notation as in
\cref{ss:discretization}, we next define
$\boldsymbol{X} := X \times \bar{X}$ and
$\boldsymbol{Q} := Q \times \bar{Q}^s \times \bar{Q}^d$. With these
definitions we then introduce the extended function spaces:
\begin{equation*}
  X(h) := X_h + X, \quad
  Q(h) := Q_h + Q, \quad
  \boldsymbol{X}(h) := \boldsymbol{X}_h + \boldsymbol{X}, \quad
  \boldsymbol{Q}(h) := \boldsymbol{Q}_h + \boldsymbol{Q}.    
\end{equation*}
Furthermore, we will need $X^s(h) := X_h^s + X^s$.

\begin{lemma}[Consistency]
  \label{lem:consistency}
  Let $(u, p) \in X \times Q$ solve the Navier--Stokes/Darcy problem
  \cref{eq:system,eq:interface,eq:bcs_simple}. Let
  $\boldsymbol{u}=(u,\gamma(u))$,
  $\boldsymbol{p}=(u,\gamma(p^s),\gamma(p^d))$, then
  \begin{equation*}
    a_h(u; \boldsymbol{u}, \boldsymbol{v}_h) + b_h(\boldsymbol{v}_h, \boldsymbol{p})
    + b_h(\boldsymbol{u}, \boldsymbol{q}_h)
    = \ell^s(v_h) + \ell^d(q_h) \quad
    \forall (\boldsymbol{v}_h, \boldsymbol{q}_h) \in \boldsymbol{X}_h \times \boldsymbol{Q}_h.
  \end{equation*}
\end{lemma}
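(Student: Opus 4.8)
The plan is to substitute the exact solution into each bilinear form, integrate by parts cellwise, and show that all facet terms either vanish because of the regularity of $(u,p)$ or reproduce exactly the interface and boundary conditions in \cref{eq:interface,eq:bcs_simple}, together with the strong form of the PDEs in \cref{eq:system}. First I would record the key smoothness facts: since $u^s\in [H^2(\Omega^s)]^{\dim}$ and $p^s\in H^1(\Omega^s)$, the quantities $u^s$, $\varepsilon(u^s)n$, and $p^s$ have single-valued traces on every facet $F\in\mathcal{F}^s$, so the jump terms across interior facets of $\mathcal{T}^s$ cancel; similarly $u^d\in H(\mathrm{div};\Omega^d)$ gives single-valued normal traces $u^d\cdot n$ on facets in $\mathcal{F}^d$. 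Because $\boldsymbol{u}=(u,\gamma(u))$, on each $\partial K$ we have $u-\bar u=0$ in the appropriate trace sense on $\Gamma_0^s$; this immediately kills the penalty term, the symmetrizing term $\int_{\partial K}2\mu\varepsilon(v)n\cdot(u-\bar u)$, and the consistency-type convective terms $\tfrac12 w\cdot n\,(u+\bar u)\cdot(v-\bar v)$ and $\tfrac12|w\cdot n|(u-\bar u)\cdot(v-\bar v)$ in $t_h$. So $a_h^s(\boldsymbol{u},\boldsymbol{v}_h)$ collapses to $\sum_K\int_K 2\mu\varepsilon(u):\varepsilon(v_h)-\sum_K\int_{\partial K}2\mu\varepsilon(u)n\cdot(v_h-\bar v_h)$, and $t_h(u;\boldsymbol{u},\boldsymbol{v}_h)$ collapses to $-\sum_K\int_K u\otimes u:\nabla v_h+\int_{\Gamma^I}(u\cdot n)\bar u\cdot\bar v_h$.

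Next I would integrate by parts the volume terms. For the convective part, $-\int_K u\otimes u:\nabla v_h = \int_K \nabla\cdot(u\otimes u)\cdot v_h - \int_{\partial K}(u\cdot n)\,u\cdot v_h$; summing over $\mathcal{T}^s$, the interior-facet contributions of $-\int_{\partial K}(u\cdot n)u\cdot v_h$ cancel against their neighbours up to the jump of $v_h$, which when combined with the facet unknowns will later be absorbed. For the diffusive part, $\sum_K\int_K 2\mu\varepsilon(u):\varepsilon(v_h) = -\sum_K\int_K(\nabla\cdot(2\mu\varepsilon(u)))\cdot v_h + \sum_K\int_{\partial K}2\mu\varepsilon(u)n\cdot v_h$, so the remaining surface term in $a_h^s$ becomes $\sum_K\int_{\partial K}2\mu\varepsilon(u)n\cdot\bar v_h$. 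Handling $b_h(\boldsymbol{v}_h,\boldsymbol{p})$ analogously, $b_h^j(v_h,\boldsymbol{p}^j)=-\sum_K\int_K p\nabla\cdot v_h+\sum_K\int_{\partial K}\gamma(p^j)v_h\cdot n^j$; integrating by parts back, $-\sum_K\int_K p\nabla\cdot v_h=\sum_K\int_K\nabla p\cdot v_h-\sum_K\int_{\partial K}p\,v_h\cdot n^j$, and since $p\in H^1$ on each subdomain its trace equals $\gamma(p^j)$, so the cell-boundary terms involving $v_h$ cancel and only the interface term $b_h^{I,j}$ survives to pair with the $\Gamma^I$ pieces. Collecting everything, the volume contributions assemble to $\int_{\Omega^s}(\nabla\cdot(u\otimes u)+\nabla p^s-\nabla\cdot(2\mu\varepsilon(u^s)))\cdot v_h + \int_{\Omega^d}(\mu\kappa^{-1}u^d+\nabla p^d)\cdot v_h$, i.e.\ $\int_{\Omega^s}(\nabla\cdot(u\otimes u)+\nabla\cdot\sigma)\cdot v_h+\int_{\Omega^d}(\mu\kappa^{-1}u+\nabla p)\cdot v_h$, which by \cref{eq:momentum,eq:d_velocity} equals $\int_{\Omega^s}f^s\cdot v_h=\ell^s(v_h)$; note there is no $v_h$-term on $\Omega^d$ since that equation has zero right-hand side.

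The remaining work is the bookkeeping on facets. The surviving facet terms are: $\sum_{K\in\mathcal{T}^s}\int_{\partial K}2\mu\varepsilon(u)n\cdot\bar v_h$ (with $\bar v_h$ single-valued and vanishing on $\Gamma^s$), the jump terms from the $u\otimes u$ integration by parts, the interface term $\int_{\Gamma^I}(u\cdot n)\bar u\cdot\bar v_h$ from $t_h$, the terms $a^I(\bar u,\bar v_h)$ and $a^d(u,v_h)$ (the latter already counted), and $b_h^{I,s}(\bar v_h,\gamma(p^s))+b_h^{I,d}(\bar v_h,\gamma(p^d))=-\int_{\Gamma^I}(\gamma(p^s)-\gamma(p^d))\bar v_h\cdot n^s$. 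On interior facets of $\mathcal{T}^s$ the two diffusive contributions $2\mu\varepsilon(u)n^+\cdot\bar v_h+2\mu\varepsilon(u)n^-\cdot\bar v_h$ vanish by single-valuedness of $\varepsilon(u)n$, and likewise the convective jump terms cancel; on $\Gamma^s$, $\bar v_h=0$. Thus only $\Gamma^I$ facets remain, and there I would combine the normal-stress term, the tangential Beavers--Joseph--Saffman term $a^I$, and the convective interface term, split $\bar v_h$ into its normal and tangential parts, and invoke \cref{eq:bc_I_slip} for the tangential components and \cref{eq:bc_I_p} together with $\sigma n\cdot n = (p\mathbb{I}-2\mu\varepsilon(u))n\cdot n$ for the normal component — being careful that on $\Gamma^I$ the convective term $\nabla\cdot(u\otimes u)$ produces, after integration by parts, exactly the boundary flux $\int_{\Gamma^I}(u\cdot n)u\cdot\bar v_h$ that must be matched against $\int_{\Gamma^I}(u\cdot n)\bar u\cdot\bar v_h$ in $t_h$; since $\boldsymbol{u}=(u,\gamma(u))$ these agree. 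Finally $b_h(\boldsymbol{u},\boldsymbol{q}_h)$ is treated the same way: cellwise integration by parts turns $-\int_K u\cdot\nabla q_h$ plus the facet terms into $\int_K(-\nabla\cdot u)q_h$ plus interface/boundary flux terms; \cref{eq:mass} gives $-\nabla\cdot u=\chi^d f^d$, the normal-continuity \cref{eq:bc_I_u} and single-valuedness of $u\cdot n$ kill the interior and interface facet terms against $\bar q_h^j$, and the exterior condition $u\cdot n=0$ on $\Gamma^d$ disposes of the rest, leaving $\int_{\Omega^d}f^d q_h=\ell^d(q_h)$. The main obstacle is precisely this last paragraph: keeping the sign conventions straight ($n^s=-n^d$ on $\Gamma^I$) while simultaneously reconciling the convective interface flux with the $t_h$ interface term and the normal/tangential decomposition against the three transmission conditions; everything else is routine cellwise integration by parts plus regularity.
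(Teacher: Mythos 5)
Your overall strategy (substitute the exact solution, integrate by parts cellwise, and match the surviving facet terms against the interface and boundary conditions) is sound, and it amounts to a self-contained expansion of what the paper delegates to \cite[Lemma~1]{Cesmelioglu:2020}; the paper's own proof only verifies the new convective identity $t_h(u;\boldsymbol{u},\boldsymbol{v}_h)=\sum_{K\in\mathcal{T}^s}\int_K\nabla\cdot(u\otimes u)\cdot v_h\dif x$ and cites the earlier Stokes/Darcy lemma for the rest. However, there is a concrete error in your treatment of $t_h$. You claim that $\bar u=\gamma(u)$ ``kills'' the term $\sum_K\int_{\partial K}\tfrac12 (w\cdot n)(u+\bar u)\cdot(v_h-\bar v_h)\dif s$. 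It does not: the factor that vanishes is $u-\bar u$, not $u+\bar u$, so with $w=u$ this term becomes $\sum_K\int_{\partial K}(u\cdot n)\,u\cdot(v_h-\bar v_h)\dif s$, which is nonzero for a general test function $\boldsymbol{v}_h$; of the two upwinding terms in $t_h$ only the one carrying the factor $u-\bar u$ drops out. This is not cosmetic, because that term is exactly what your later bookkeeping needs: after integrating $-\int_K u\otimes u:\nabla v_h\dif x$ by parts you are left with the flux $-\sum_K\int_{\partial K}(u\cdot n)u\cdot v_h\dif s$, whose interior-facet contributions reduce to integrals of $(u\cdot n)u\cdot\jump{v_h}$ and do not cancel, since $v_h$ is discontinuous. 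You acknowledge this (``which when combined with the facet unknowns will later be absorbed''), but you have already discarded the only term containing the facet unknown that could do the absorbing. The same issue resurfaces at $\Gamma^I$, where you assert that $-\int_{\Gamma^I}(u\cdot n)u\cdot v_h\dif s$ ``agrees'' with $\int_{\Gamma^I}(u\cdot n)\bar u\cdot\bar v_h\dif s$; these involve $v_h$ and $\bar v_h$ respectively and only cancel after the conversion described below.

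The repair is short: keep the term, observe that it equals $\sum_K\int_{\partial K}(u\cdot n)u\cdot(v_h-\bar v_h)\dif s$, and add it to the integration-by-parts flux to obtain $-\sum_K\int_{\partial K}(u\cdot n)u\cdot\bar v_h\dif s$. Single-valuedness of $\bar v_h$ and of $(u\cdot n)u$ (from $u^s\in[H^2(\Omega^s)]^{\dim}$) then makes the interior facets telescope, $\bar v_h=0$ disposes of $\Gamma^s$, and the surviving contribution $-\int_{\Gamma^I}(u\cdot n)u\cdot\bar v_h\dif s$ cancels exactly against the last term of $t_h$, giving the identity the paper states. With that correction your argument closes; the remaining facet bookkeeping (diffusive, pressure, and mass-equation terms) is correct.
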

\begin{proof}
  This is an immediate consequence of \cite[Lemma 1]{Cesmelioglu:2020}
  and that
  \begin{equation*}
    t_h(u; \boldsymbol{u}, \boldsymbol{v}_h)
    =
    \sum_{K\in\mathcal{T}^s}\int_{K} \nabla \cdot (u\otimes u) \cdot v_h \dif x \quad \forall \boldsymbol{v}_h \in \boldsymbol{X}_h,
  \end{equation*}
  which follows by integration by parts, smoothness of $u$, and
  single-valuedness of $\bar{v}_h$. \qed
\end{proof}

For the analysis of \cref{eq:hdg} we require the following two
norms defined on $\boldsymbol{X}(h)$:
\begin{align*}
  \tnorm{\boldsymbol{v}}_{v}^2
  :&=
     \tnorm{\boldsymbol{v}}_{v,s}^2
     + \tnorm{\boldsymbol{v}}_{v,d}^2
     + \norm[0]{ \bar{v}^t }^2_{\Gamma^I},
  \\
  \tnorm{\boldsymbol{v}}_{v'}^2
  :&= \tnorm{\boldsymbol{v}}_{v}^2
     + \sum_{K\in \mathcal{T}^s} h_K^2 \envert{v}_{2,K}^2
     =\tnorm{\boldsymbol{v}}_{v',s}^2  
     + \tnorm{\boldsymbol{v}}_{v,d}^2
     + \norm[0]{ \bar{v}^t }^2_{\Gamma^I},
\end{align*}
where
\begin{equation*}
  \begin{split}
    \tnorm{\boldsymbol{v}}_{v,s}^2
    &:= \sum_{K\in \mathcal{T}^s}\del[1]{\norm[0]{\nabla v}_K^2
      + h_K^{-1}\norm{v-\bar{v}}^2_{\partial K}},
    \\
    \tnorm{\boldsymbol{v}}_{v',s}^2
    &:= \tnorm{\boldsymbol{v}}_{v,s}^2
    + \sum_{K\in \mathcal{T}^s} h_K^2 \envert{v}_{2,K}^2,
    \\
    \tnorm{\boldsymbol{v}}_{v,d}^2
    &:= \norm{v}_{\text{div}}^2 + \sum_{F \in \mathcal{F}^d\backslash\mathcal{F}^I}h_F^{-1}\norm[0]{\jump{v\cdot n}}_{F}^2
    + \sum_{K \in \mathcal{T}^d} h_K^{-1}\norm{(v - \bar{v})\cdot n}_{\partial K \cap \Gamma^I}^2,    
  \end{split}  
\end{equation*}
where we remark that $\jump{v \cdot n} = v \cdot n$ on
$\Gamma^d$. 

\begin{remark}
  \label{rem:different_norm}
  The norms $\tnorm{\cdot}_v$ and $\tnorm{\cdot}_{v'}$ are different
  from the norms used in \cite{Cesmelioglu:2020}; instead of only
  $\norm[0]{v}_{\Omega^d}$ in $\tnorm{\boldsymbol{v}}_{v}$ we use
  $\tnorm{\boldsymbol{v}}_{v,d}$ for the functions in the Darcy part
  of the domain.
\end{remark}

The norms $\tnorm{\cdot}_v$ and $\tnorm{\cdot}_{v'}$ are equivalent on
$\boldsymbol{X}_h$, i.e., there exists a constant $c_e$ such that
$\tnorm{\boldsymbol{v}}_v \le \tnorm{\boldsymbol{v}}_{v'} \le c_e
\tnorm{\boldsymbol{v}}_v$ for all
$\boldsymbol{v} \in \boldsymbol{X}_h$, see
\cite[eq.~(5.5)]{Wells:2011}. On $X_h^s$, we will also require the
following results from \cite[Theorem 4.4 and Proposition
4.5]{Girault:2009}:
\begin{align}
  \label{eq:dpoincareineq}
  \norm{v_h}_{\Omega^s}
  &\le c_{p} \norm{v_h}_{1,h,\Omega^s}
    \le c_{p} \tnorm{\boldsymbol{v}_h}_{v,s}
    \quad \forall \boldsymbol{v}_h \in \boldsymbol{X}_h^s,
  \\
  \label{eq:dtrpoincareineq}
  \text{For } r \ge 2: \quad
  \norm{v_h^s}_{r,0,\Gamma^I}
  &\le c_{si,r} \norm{v_h}_{1,h,\Omega^s}
    \le c_{si,r} \tnorm{\boldsymbol{v}_h}_{v,s}
    \quad \forall \boldsymbol{v}_h \in \boldsymbol{X}_h^s,  
\end{align}
where $\norm{v_h}_{1,h,\Omega^s} := \tnorm{(v_h, \av{v_h})}_{v,s}$ and
where $c_p$ and $c_{si,r}$ are positive constants independent of
$h$. On $\boldsymbol{Q}(h)$, we define
\begin{equation*}
  \tnorm{\boldsymbol{q}}_p^2 := \tnorm{\boldsymbol{q}^s}_{p,s}^2 + \tnorm{\boldsymbol{q}^d}_{p,d}^2
  \quad\text{where}\quad
  \tnorm{\boldsymbol{q}^j}_{p,j}^2 := \norm{q}_{\Omega^j}^2 + \sum_{K
  \in \mathcal{T}^j} h_K \norm[0]{\bar{q}^j}_{\partial K}^2,\ j=s,d.
\end{equation*}

For the linear forms on the right hand side of \cref{eq:hdg} we note,
using \cref{eq:dpoincareineq}, that
\begin{subequations}
  \label{eq:boundsfsfd}
  \begin{align}
    |\ell^s(v_h)|
    &\le \norm[0]{f^s}_{\Omega^s}\norm[0]{v_h}_{\Omega^s} \le c_p \norm[0]{f^s}_{\Omega^s}\tnorm{\boldsymbol{v}_h}_{v,s}
      \le c_p \norm[0]{f^s}_{\Omega^s}\tnorm{\boldsymbol{v}_h}_{v} && \forall \boldsymbol{v}_h \in \boldsymbol{X}_h,
    \\
    |\ell^d(q_h)|
    &\le \norm[0]{f^d}_{\Omega^d}\norm[0]{q_h}_{\Omega^d} \le \norm[0]{f^d}_{\Omega^d}\tnorm{\boldsymbol{q}_h}_{p,d}
      \le \norm[0]{f^d}_{\Omega^d}\tnorm{\boldsymbol{q}_h}_{p} && \forall \boldsymbol{q}_h \in \boldsymbol{Q}_h.
  \end{align}  
\end{subequations}

The following properties of the different bilinear forms in
\cref{eq:hdg} hold (see \cite[Lemma 3]{Cesmelioglu:2020}): for all
$\boldsymbol{u}, \boldsymbol{v} \in \boldsymbol{X}(h)$:
\begin{subequations}
  \label{eq:continuity-ah-sep-terms}
  \begin{align}
    |a_h^s(\boldsymbol{u}, \boldsymbol{v})|
    &\le \mu c_{ac}^s \tnorm{\boldsymbol{u}}_{v',s}\tnorm{\boldsymbol{v}}_{v',s},
    \\
    |a^d(u, v)|
    &\le \mu\kappa_{\min}^{-1} \norm{u}_{\Omega^d}\norm{v}_{\Omega^d},
    \\
    |a^I(\bar{u}, \bar{v})|
    &\le \alpha\mu\kappa_{\min}^{-1/2}\norm[0]{\bar{u}^t}_{\Gamma^I}\norm[0]{\bar{v}^t}_{\Gamma^I},
  \end{align}
\end{subequations}
where $c_{ac}^s>0$ is a constant independent of $h$ and
$(z)^t = z - (z\cdot n)n$. Furthermore, by \cite[Lemma
2]{Cesmelioglu:2020}, there exists a constant $c_{ae}^s>0$,
independent of $h$, and a constant $\beta_0 > 0$ such that for
$\beta > \beta_0$ and for all $\boldsymbol{v}_h \in \boldsymbol{X}_h$:
\begin{subequations}
  \label{eq:coercivity_ahsdI}
  \begin{equation}
    a_h^s(\boldsymbol{v}_h, \boldsymbol{v}_h)
    \ge \mu c_{ae}^s \tnorm{\boldsymbol{v}_h}_{v,s}^2.
  \end{equation}
  Additionally, for all $\boldsymbol{v}_h \in \boldsymbol{X}_h$,
  \begin{equation}
    a^d(v_h, v_h)
    \ge \mu \kappa_{\max}^{-1} \norm{v_h}_{\Omega^d}^2,
    \quad
    a^I(\bar{v}_h, \bar{v}_h)
    \ge \alpha\mu\kappa_{\max}^{-1/2}\norm[0]{\bar{v}_h^t}_{\Gamma^I}^2.    
  \end{equation}
\end{subequations}

To prove properties of the bilinear form $b_h$, we use the
Brezzi--Douglas--Marini (BDM) interpolation operator
$\Pi_V : \sbr[0]{H^1(\Omega)}^{\dim} \to X_h \cap
H(\text{div};\Omega)$. For all $u \in \sbr[0]{H^{k+1}(K)}^{\dim}$ this
interpolation operator satisfies \cite[Lemma 7]{Hansbo:2002}:
\begin{subequations}
  \label{eq:PiVinterpOp}
  \begin{align}
    \label{eq:div_PiV}
    \int_K q_h (\nabla \cdot u - \nabla \cdot \Pi_V u) \dif x
    &= 0 && \forall q_h \in P_{k-1}(K),
    \\
    \label{eq:jump_PiV}
    \int_F \bar{q}_h( n \cdot u - n \cdot \Pi_Vu ) \dif s
    &= 0 && \forall \bar{q}_h \in P_k(F),\ F \text{ is a face on } \partial K,
  \end{align}
\end{subequations}
as well as the interpolation estimates:
\begin{subequations}
  \label{eq:PiVintest}
  \begin{align}
    \label{eq:PiVintest-u}
    \norm[0]{u - \Pi_Vu}_{m,K} &\le ch_K^{l-m}\norm[0]{u}_{l,K}, && m=0,1,2,\ \max(1,m) \le l \le k+1,
    \\
    \label{eq:PiVintest-divu}
    \norm[0]{\nabla \cdot (u - \Pi_Vu)}_{K} &\le c h_K^{l}\norm[0]{\nabla \cdot u}_{l,K}, && 0 \le l \le k.
  \end{align}
\end{subequations}
We will also require the $L^2$-projection into the facet velocity
space, $\bar{\Pi}_V:\sbr[0]{H^1(\Omega^s)}^{\dim} \to \bar{X}_h$. The
following two lemmas prove properties of the bilinear form $b_h$. We
start in \cref{lem:infsupbh} by proving an inf-sup condition. An
inf-sup condition for $b_h$ was recently proven in \cite[Theorem
1]{Cesmelioglu:2020}. However, this proof is modified here due to the
use of different norms (see \cref{rem:different_norm}).

\begin{lemma}
  \label{lem:infsupbh}
  There exists a positive constant $c_{bb}$, independent of $h$, such
  that for all $\boldsymbol{q}_h \in \boldsymbol{Q}_h$,
  \begin{equation}
    \label{eq:infsupbh}
    c_{bb} \tnorm{\boldsymbol{q}_h}_p \le
    \sup_{\substack{\boldsymbol{v}_h \in \boldsymbol{X}_h \\ \boldsymbol{v}_h \ne 0}}
    \frac{b_h(\boldsymbol{v}_h, \boldsymbol{q}_h)}{\tnorm{\boldsymbol{v}_h}_{v}}.
  \end{equation}
\end{lemma}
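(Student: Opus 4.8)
The plan is to prove the inf-sup condition \eqref{eq:infsupbh} by splitting the pressure norm into the three pieces $\norm{q_h}_{\Omega^s}$, $\norm{q_h}_{\Omega^d}$, and the facet contributions $\sum_K h_K \norm{\bar q_h^j}_{\partial K}^2$, and constructing, for each piece, a test function $\boldsymbol v_h \in \boldsymbol X_h$ (with $\tnorm{\boldsymbol v_h}_v$ controlled) that realizes a definite fraction of that piece of the norm through $b_h(\boldsymbol v_h,\boldsymbol q_h)$. The three candidates are then combined with suitable weights. Since the corresponding inf-sup condition was already established in \cite[Theorem 1]{Cesmelioglu:2020}, the bulk of the argument can be reused; the only genuinely new point is that the $\tnorm{\cdot}_{v,d}$-part of $\tnorm{\cdot}_v$ now contains the extra terms $\sum_{F\in\mathcal F^d\setminus\mathcal F^I} h_F^{-1}\norm{\jump{v\cdot n}}_F^2$ and $\sum_{K\in\mathcal T^d} h_K^{-1}\norm{(v-\bar v)\cdot n}_{\partial K\cap\Gamma^I}^2$ (see \cref{rem:different_norm}), so one must re-check that the test functions used in \cite{Cesmelioglu:2020} still have bounded $\tnorm{\cdot}_v$-norm under this stronger norm, and in particular that these new jump terms are controlled.

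The key steps, in order: \textbf{(i)} For the bulk pressure: given $q_h \in Q_h$, pick $w\in [H^1_0(\Omega)]^{\dim}$ with $-\nabla\cdot w = q_h$ on $\Omega$ and $\norm{w}_{1,\Omega}\le C_{bb}^{-1}\norm{q_h}_{\Omega}$ (the continuous inf-sup for the divergence on $\Omega$), then set $v_h := \Pi_V w$. By the commuting property \eqref{eq:div_PiV} we get $\nabla\cdot v_h = \Pi_Q(\nabla\cdot w) = -q_h$ cellwise, and by \eqref{eq:jump_PiV} the normal component of $v_h$ is single-valued across interior facets, hence $\jump{v_h\cdot n}=0$ on $\mathcal F^d\setminus\mathcal F^I$ and $(v_h-\bar v_h)\cdot n=0$ on $\Gamma^I$ once we choose $\bar v_h := \bar\Pi_V w$ on $\mathcal F^s$ (so the two new terms in $\tnorm{\cdot}_{v,d}$ vanish for this candidate). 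Then $b_h(v_h,\bar v_h;\boldsymbol q_h)$ collapses (after integration by parts, using single-valued normals and the boundary conditions) to $\int_\Omega q_h(-\nabla\cdot v_h)\dif x = \norm{q_h}_\Omega^2$, and the interpolation estimates \eqref{eq:PiVintest}, together with \eqref{eq:dpoincareineq}, give $\tnorm{(v_h,\bar v_h)}_v \le c\,\norm{w}_{1,\Omega} \le c\,C_{bb}^{-1}\norm{q_h}_\Omega$. \textbf{(ii)} For the facet terms $h_K\norm{\bar q_h^j}_{\partial K}^2$: use a local "bubble"-type test function supported near each facet, with $v_h=0$ but $\bar v_h$ (resp. the normal-trace datum) chosen proportional to $\bar q_h^j$ scaled by $h_K$; on such functions $v_h-\bar v_h = -\bar v_h$ and the boundary integrals in $b_h^j$ and $b_h^{I,j}$ reproduce $\sum_K h_K\norm{\bar q_h^j}_{\partial K}^2$ up to a constant, while a scaling/trace-inverse estimate bounds $\tnorm{(0,\bar v_h)}_v$ by a constant times $\big(\sum_K h_K\norm{\bar q_h^j}_{\partial K}^2\big)^{1/2}$ plus a controllable cross term with $\norm{q_h}_\Omega$. \textbf{(iii)} Combine: set $\boldsymbol v_h := \boldsymbol v_h^{(i)} + \eta\,\boldsymbol v_h^{(ii)}$ with $\eta>0$ small, absorb cross terms by Young's inequality, and conclude $b_h(\boldsymbol v_h,\boldsymbol q_h)\ge c\,\tnorm{\boldsymbol q_h}_p^2$ while $\tnorm{\boldsymbol v_h}_v\le c\,\tnorm{\boldsymbol q_h}_p$, which rearranges to \eqref{eq:infsupbh}.

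The main obstacle is step \textbf{(i)}'s bookkeeping of the boundary terms in $b_h$: one must verify that after integration by parts the facet integrals $\int_{\partial K}\bar q_h^j\, v_h\cdot n^j$ and the interface terms $-\int_{\Gamma^I}\bar q_h^j\bar v_h\cdot n^j$ telescope correctly given the single-valuedness of $v_h\cdot n$ (from \eqref{eq:jump_PiV}) and the homogeneous conditions $v_h\cdot n=0$ on $\Gamma^d$, $\bar v_h = 0$ on $\Gamma^s$ — so that nothing survives except $\int_\Omega q_h(-\nabla\cdot v_h)\dif x$. The second delicate point, which is exactly what distinguishes this lemma from \cite[Theorem 1]{Cesmelioglu:2020}, is confirming that neither the bulk candidate nor the facet candidate activates the new jump terms in $\tnorm{\cdot}_{v,d}$ beyond what is already bounded; for the BDM candidate this is immediate from \eqref{eq:jump_PiV}, and for the facet bubbles one chooses them with single-valued (or vanishing) normal traces, so the stronger norm costs nothing. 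Everything else is a routine assembly of scaling arguments, trace and inverse inequalities, and Young's inequality.
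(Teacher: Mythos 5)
Your overall strategy --- split $\tnorm{\boldsymbol{q}_h}_p$ into a bulk part and a facet part, build a test function for each, and combine --- is the same skeleton as the paper's proof; the paper packages the combination step via \cite[Theorem 3.1]{Howell:2011} instead of your explicit $\eta$-weighted sum with Young's inequality, and your step (i) (solve $-\nabla\cdot w = q_h$, take $\boldsymbol{v}_h = (\Pi_V w,\bar\Pi_V w)$, verify membership in the kernel of the facet form, and check that the new Darcy contributions to $\tnorm{\cdot}_{v,d}$ either vanish or are controlled) matches the paper's treatment of \cref{eq:infsup-b1} essentially verbatim.

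The gap is in step (ii). You propose to control $\sum_{K}h_K\norm[0]{\bar q_h^j}_{\partial K}^2$ with a candidate having $v_h = 0$ and only $\bar v_h$ nonzero. But with $v_h=0$ the form reduces to $b_h(\boldsymbol{v}_h,\boldsymbol{q}_h) = -\sum_j\int_{\Gamma^I}\bar q_h^j\,\bar v_h\cdot n^j\dif s$: the only place $\bar q_h^j$ is tested is on the interface, so the facet pressure on every facet of $\mathcal{F}^j$ away from $\Gamma^I$ is invisible to this candidate, and the claimed lower bound in terms of $\sum_K h_K\norm[0]{\bar q_h^j}_{\partial K}^2$ cannot be produced. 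The term in $b_h^j$ that sees $\bar q_h^j$ on all of $\partial K$ is $\int_{\partial K}\bar q_h^j\,v_h\cdot n^j\dif s$, which requires a nonzero \emph{cell} function whose normal trace reproduces $\bar q_h^j$; this is exactly the BDM local lifting $w_h^j := L\bar q_h^j$ (satisfying $w_h^j\cdot n=\bar q_h^j$ on $\partial K$, paired with $\bar v_h = 0$) used in the paper. Relatedly, your remark that the facet candidate can be chosen so that the new jump terms in $\tnorm{\cdot}_{v,d}$ are not activated is false for this lifting: on an interior Darcy facet one has $\jump{w_h^d\cdot n} = 2\bar q_h^d \ne 0$. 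This does not sink the argument --- the paper bounds the corresponding terms $J_2$ and $J_3$ by $C\sum_K h_K^{-1}\norm[0]{\bar q_h^d}_{\partial K}^2$, the same quantity that already controls $\tnorm{(w_h,0)}_{v,s}$ --- but it has to be checked rather than dismissed. With the facet candidate replaced by the lifting (and the per-element $h_K$ scaling you already anticipate), the rest of your outline goes through.
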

\begin{proof}
  Let us write
  $b_h(\boldsymbol{v}_h, \boldsymbol{q}_h) = b_h^{1}(v_h, q_h) +
  b_h^{2}(\boldsymbol{v}_h, (\bar{q}^s_h, \bar{q}^d_h))$, where
  \begin{align*}
    b_h^{1}(v_h, q_h)
    &= -\sum_{j=s,d}\sum_{K\in\mathcal{T}^j} \int_K q_h \nabla\cdot v_h \dif x,
    \\
    b_h^{2}(\boldsymbol{v}_h, (\bar{q}^s_h,\bar{q}^d_h))
    &= \sum_{j=s,d}\del[2]{\sum_{K\in\mathcal{T}^j} \int_{\partial K} \bar{q}^j_h v_h\cdot n^j \dif s - \int_{\Gamma^I}\bar{q}_h^j\bar{v}_h\cdot n^j \dif s}.
  \end{align*}
  To prove \cref{eq:infsupbh} we apply \cite[Theorem 3.1]{Howell:2011}
  which, in this context, states that if there exist constants
  $c_{b1}>0$ and $c_{b2}>0$, independent of $h$, such that for all
  $q_h \in Q_h$ and
  $(\bar{q}_h^s,\bar{q}_h^d) \in \bar{Q}_h^s \times \bar{Q}_h^d$,
  \begin{subequations}
    \label{eq:infsup-b1-b2}
    \begin{align}
      \label{eq:infsup-b1}
      c_{b1} \norm[0]{q_h}_{\Omega}
      &\le \sup_{\substack{\boldsymbol{v}_h \in \text{Ker}(b_h^2) \\ \boldsymbol{v}_h \ne 0}}
      \frac{ b_h^{1}(v_h,q_h)}{\tnorm{\boldsymbol{v}_h}_v},
      \\
      \label{eq:infsup-b2}
      \del[2]{c_{b2}\sum_{j=s,d}\sum_{K\in\mathcal{T}^j}h_K\norm[0]{\bar{q}_h^j}^2_{\partial K}}^{1/2}      
      &\le \sup_{\substack{\boldsymbol{v}_h \in \boldsymbol{X}_h \\ \boldsymbol{v}_h \ne 0}}
      \frac{ b_h^{2}(\boldsymbol{v}_h, (\bar{q}^s_h,\bar{q}^d_h))}{\tnorm{\boldsymbol{v}_h}_v},
    \end{align} 
  \end{subequations}
  where
  $\text{Ker}(b_h^2) := \cbr[0]{\boldsymbol{v}_h \in
    \boldsymbol{X}_h\,:\, b_h^{2}(\boldsymbol{v}_h, (\bar{q}^s_h,
    \bar{q}^d_h)) = 0 \ \forall (\bar{q}_h^s,\bar{q}_h^d) \in
    \bar{Q}_h^s \times \bar{Q}_h^d}$, then there exists a constant
  $c_{bb}$ independent of $h$ such that \cref{eq:infsupbh} holds for
  all $\boldsymbol{q}_h \in \boldsymbol{Q}_h$. As such, we now
  separately prove \cref{eq:infsup-b1,eq:infsup-b2}.

  We start with \cref{eq:infsup-b1}. Let $q_h \in Q_h$. Then, since
  $q_h \in L^2_0(\Omega)$ there exists a
  $v \in \sbr[0]{H^1_0(\Omega)}^{\dim}$ such that
  $-\nabla \cdot v = q_h$ and
  $c_{vq}\norm{v}_{1,\Omega} \le \norm[0]{q_h}_{\Omega}$ (e.g,
  \cite[Theorem 6.5]{Pietro:book}). By properties of $\Pi_V$ and
  $\bar{\Pi}_V$, it was shown in the proof of \cite[Lemma
  5]{Cesmelioglu:2020} that
  \begin{equation}
    \label{eq:tnormpivpibv}
    \tnorm{(\Pi_Vv, \bar{\Pi}_Vv)}_{v,s}
    \le C\norm[0]{v}_{1,\Omega^s},
    \quad
    \norm[0]{(\bar{\Pi}_Vv)^t}_{\Gamma^I}
    \le C\norm[0]{v}_{1,\Omega^s}.    
  \end{equation}
  (The proof in \cite[Lemma 5]{Cesmelioglu:2020} assumes that
  $\bar{\Pi}_V$ is the restriction of the Scott--Zhang interpolant to
  the mesh skeleton, but it holds also for the $L^2$-projection used
  here.) Next, by definition,
  \begin{equation*}
    \begin{split}
      \tnorm{(\Pi_V v, \bar{\Pi}_V v)}_{v,d}^2
      &=
      \norm[0]{\Pi_Vv}_{\text{div}}^2 
      + \sum_{F \in \mathcal{F}^d\backslash\mathcal{F}^I}h_F^{-1}\norm[0]{\jump{\Pi_Vv\cdot n}}_{F}^2
      + \sum_{K \in \mathcal{T}^d} h_K^{-1}\norm[0]{(\Pi_Vv - \bar{\Pi}_Vv)\cdot n}_{\partial K \cap \Gamma^I}^2
      \\
      &=: I_1 + I_2 + I_3.
    \end{split}
  \end{equation*}
  Let us consider each term separately. For $I_1$, by a triangle
  inequality and \cref{eq:PiVintest-u,eq:PiVintest-divu},
  $I_1 \le C \norm{v}_{1,\Omega^d}^2$.  Since
  $\Pi_Vv \in H(\text{div};\Omega^d)$ and $v=0$ on $\Gamma^d$ we find
  that $I_2 = 0$. Finally, for $I_3$, we have
  \begin{equation*}
    I_3
    = \sum_{K \in \mathcal{T}^d} h_K^{-1}\norm[0]{(\Pi_Vv - \bar{\Pi}_Vv)\cdot n}_{\partial K \cap \Gamma^I}^2
    \le C \sum_{K \in \mathcal{T}^d} h_K^{-1} h_K \norm{v}_{1,K}^2 = C \norm{v}_{1,\Omega^d}^2,
  \end{equation*}
  where the inequality follows from the proof of \cite[Lemma
  9]{Rhebergen:2020}. Collecting the bounds for $I_1$ to $I_3$ proves
  $\tnorm{(\Pi_V v, \bar{\Pi}_V v)}_{v,d} \le C
  \norm{v}_{1,\Omega^d}$. Combining this result with
  \cref{eq:tnormpivpibv}, we find
  \begin{equation*}
    \tnorm{(\Pi_V v, \bar{\Pi}_V v)}_{v}
    \le C \norm{v}_{1,\Omega}.
  \end{equation*}
  At this point, we remark that
  $(\Pi_V v, \bar{\Pi}_V v) \in \text{Ker}(b_h^2)$. To see this, we
  note that
  \begin{equation*}
    \begin{split}
      b_h^{2}((\Pi_V v, \bar{\Pi}_V v), (\bar{q}^s_h,\bar{q}^d_h))
      &= \int_{\Gamma^I \cup \Gamma^s} \bar{q}^s_h (\Pi_Vv - \bar{\Pi}_Vv) \cdot n^s \dif s
      + \int_{\Gamma^I \cup \Gamma^d} \bar{q}^d_h (\Pi_Vv - \bar{\Pi}_Vv) \cdot n^d \dif s
      \\
      &= \int_{\Gamma^I} \bar{q}^s_h (v - v) \cdot n^s \dif s
      + \int_{\Gamma^I} \bar{q}^d_h (v - v) \cdot n^d \dif s = 0,
    \end{split}
  \end{equation*}
  where the first equality is because $\Pi_Vv \cdot n^j$ is continuous
  on element boundaries and $\bar{q}_h^j$ is single-valued, and the
  second equality is by properties of $\Pi_V$ and $\bar{\Pi}_V$, and
  $v \cdot n^j = 0$ on $\Gamma^j$. We therefore find,
  \begin{equation*}
    \sup_{\substack{\boldsymbol{v}_h \in \text{Ker}(b_h^2) \\ \boldsymbol{v}_h \ne 0}}
    \frac{-\int_{\Omega}q_h\nabla \cdot v_h\dif x}{\tnorm{\boldsymbol{v}_h}_{v}}
    \ge
    \frac{-\int_{\Omega}q_h\nabla \cdot \Pi_Vv\dif x}{\tnorm{(\Pi_Vv,\bar{\Pi}_V v)}_{v}}
    \ge
    \frac{\norm[0]{q_h}^2_{\Omega}}{C\norm[0]{v}_{1,\Omega}}
    \ge
    \frac{c_{vq}}{C}\norm[0]{q_h}_{\Omega},
  \end{equation*}
  where we used $c_{vq}\norm{v}_{1,\Omega} \le \norm[0]{q_h}_{\Omega}$
  for the last inequality.

  With \cref{eq:infsup-b1} proven, we proceed with
  \cref{eq:infsup-b2}. Let $\bar{q}_h^j \in \bar{Q}_h^j$. Define \\
  $R_k(\partial K) := \cbr[0]{q\,: q\in L^2(\partial K),\ q|_F \in
    P_k(F)\ \forall F\in\mathcal{F}(K)}$ where $\mathcal{F}(K)$ is the
  set of facets of the simplex $K$. Let
  $w_h^j \in \sbr[0]{P_k(K)}^{\dim}$ for all $K \in \mathcal{T}^d$
  such that $w_h^j := L\bar{q}_h^j$, $j=s,d$, with
  $L:R_k(\partial K) \to \sbr[0]{P_k(K)}^{\dim}$ the
  Brezzi--Douglas--Marini (BDM) local lifting operator (for example,
  \cite[Proposition 2.10]{Du:book}). Define
  $w_h := \chi^sw_h^s + \chi^dw_h^d \in \sbr[0]{P_k(K)}^{\dim}$ for all
  $K \in \mathcal{T}$. It was shown in the proof of \cite[Lemma
  6]{Cesmelioglu:2020} that
  \begin{equation}
    \label{eq:infsup-7}
    \tnorm{(w_h,0)}_{v,s}^2
    = \sum_{K\in \mathcal{T}^s}\del[2]{\norm[0]{\nabla (L\bar{q}_h^s)}_K^2 + h_K^{-1}\norm[0]{L\bar{q}_h^s}_{\partial K}^2}
    \le C\sum_{K\in \mathcal{T}^s}h_K^{-1}\norm[0]{\bar{q}_h^s}_{\partial K}^2.
  \end{equation}
  Next, note that
  \begin{equation*}
    \tnorm{(w_h^d, 0)}_{v,d}^2
    =
    \norm[0]{w_h^d}_{\text{div}}^2       
    + \sum_{F \in \mathcal{F}^d\backslash\mathcal{F}^I}h_F^{-1}\norm[0]{\jump{w_h^d\cdot n}}_{F}^2
    + \sum_{K \in \mathcal{T}^d} h_K^{-1}\norm[0]{w_h^d\cdot n}_{\partial K \cap \Gamma^I}^2
    =: J_1 + J_2 + J_3.      
  \end{equation*}
  From \cite[Eq. (44)]{Cesmelioglu:2020}, and similar to
  \cref{eq:infsup-7},
  \begin{equation*}
    J_1 \le \sum_{K\in\mathcal{T}^d}\del[1]{\norm[0]{L\bar{q}_h^d}_K^2 + \norm[0]{\nabla (L\bar{q}_h^d)}_K^2}
    \le C\sum_{K\in\mathcal{T}^d}h_K^{-1}\norm[0]{\bar{q}_h^d}_{\partial K}^2.
  \end{equation*}
  Since $L\bar{q}_h^d \cdot n = \bar{q}_h^d$ on $\partial K$ for all
  $K \in \mathcal{T}^d$,
  \begin{equation*}
    J_2 \le C\sum_{K \in \mathcal{T}^d} h_K^{-1}\norm[0]{L\bar{q}_h^d \cdot n}_{\partial K}^2
    = C \sum_{K \in \mathcal{T}^d} h_K^{-1}\norm[0]{\bar{q}_h^d}_{\partial K}^2.
  \end{equation*}
  Finally, for $J_3$ we find
  \begin{equation*}
    J_3 = \sum_{K \in \mathcal{T}^d} h_K^{-1}\norm[0]{L\bar{q}_h^d \cdot n}_{\partial K \cap \Gamma^I}^2
    =\sum_{K \in \mathcal{T}^d} h_K^{-1}\norm[0]{\bar{q}_h^d}_{\partial K \cap \Gamma^I}^2
    \le \sum_{K\in\mathcal{T}^d}h_K^{-1}\norm[0]{\bar{q}_h^d}_{\partial K}^2.
  \end{equation*}
  Collecting the bounds for $J_1$ to $J_3$, we find
  $ \tnorm{(w_h^d, 0)}_{v,d}^2 \le C \sum_{K \in \mathcal{T}^d}
  h_K^{-1} \norm[0]{\bar{q}_h^d}_{\partial K}^2$. Combining this with
  \cref{eq:infsup-7}, 
  \begin{equation*}
    \label{eq:whvterm}
    \tnorm{(w_h, 0)}_{v}^2
    \le C \sum_{j=s,d}\del[2]{\sum_{K \in \mathcal{T}^j} h_K^{-1} \norm[0]{\bar{q}_h^j}_{\partial K}^2}.
  \end{equation*}
  Using that $w_h^j\cdot n=\bar{q}_h^j$, $j=s,d$, \cref{eq:infsup-b2}
  now follows using identical steps as the proof of \cite[Lemma
  6]{Cesmelioglu:2020}. \qed
\end{proof}

The next lemma proves boundedness of $b_h$.

\begin{lemma}
  \label{lem:boundedbh}
  There exists a positive constant $c_{bc}$, such that for all
  $(\boldsymbol{v}, \boldsymbol{q}) \in \boldsymbol{X}(h) \times
  \boldsymbol{Q}(h)$,
  \begin{equation*}
    |b_h(\boldsymbol{v}, \boldsymbol{q})|
    \le c_{bc} \tnorm{\boldsymbol{v}}_v \tnorm{\boldsymbol{q}}_p.
  \end{equation*}
\end{lemma}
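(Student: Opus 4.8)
The plan is to bound $b_h$ one subdomain at a time. First I would regroup the four pieces, writing $b_h(\boldsymbol v,\boldsymbol q)=\sum_{j=s,d}\bigl(b_h^j(v,\boldsymbol q^j)+b_h^{I,j}(\bar v,\bar q^j)\bigr)$, so that the $\Omega^j$ contribution consists of the volume term $-\sum_{K\in\mathcal T^j}\int_K q\,\nabla\cdot v\,\dif x$, the cell--boundary terms $\sum_{K\in\mathcal T^j}\int_{\partial K}\bar q^j\,v\cdot n^j\,\dif s$, and the interface term $-\int_{\Gamma^I}\bar q^j\,\bar v\cdot n^j\,\dif s$. The guiding principle is that, by single--valuedness of $\bar q^j$ and of $\bar v$ on interior facets, by the boundary conditions, and (on $\Omega^d$) by the identity $\jump{v\cdot n}=v\cdot n$ on $\Gamma^d$, the cell--boundary and interface integrals can be reorganised so that only quantities appearing explicitly in $\tnorm{\cdot}_{v,s}$, $\tnorm{\cdot}_{v,d}$, $\tnorm{\cdot}_{p,s}$ and $\tnorm{\cdot}_{p,d}$ occur; once this is done, the bound follows from Cauchy--Schwarz applied cell by cell and facet by facet.

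For the $\Omega^s$ part I would split $v\cdot n^s=(v-\bar v)\cdot n^s+\bar v\cdot n^s$ on each $\partial K$. The $\bar v$ contributions cancel in pairs on interior facets of $\mathcal T^s$ (opposite outward normals, with $\bar q^s$ and $\bar v$ single--valued) and vanish on $\Gamma^s$ because $\bar v=0$ there, hence they collapse to $\int_{\Gamma^I}\bar q^s\,\bar v\cdot n^s\,\dif s$, which exactly cancels $b_h^{I,s}$. What remains, $-\sum_{K\in\mathcal T^s}\int_K q\,\nabla\cdot v\,\dif x+\sum_{K\in\mathcal T^s}\int_{\partial K}\bar q^s(v-\bar v)\cdot n^s\,\dif s$, I would bound by $\norm{q}_{\Omega^s}\bigl(\sum_K\norm{\nabla\cdot v}_K^2\bigr)^{1/2}\le c\,\tnorm{\boldsymbol q^s}_{p,s}\tnorm{\boldsymbol v}_{v,s}$ (using $\norm{\nabla\cdot v}_K\le c\norm{\nabla v}_K$) together with $\bigl(\sum_K h_K\norm{\bar q^s}_{\partial K}^2\bigr)^{1/2}\bigl(\sum_K h_K^{-1}\norm{v-\bar v}_{\partial K}^2\bigr)^{1/2}\le\tnorm{\boldsymbol q^s}_{p,s}\tnorm{\boldsymbol v}_{v,s}$, respectively.

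The $\Omega^d$ part is the one place where the modified Darcy norm of \cref{rem:different_norm} enters, since there only $(v-\bar v)\cdot n$ on $\Gamma^I$ and $\jump{v\cdot n}$ on the remaining facets are controlled. Using single--valuedness of $\bar q^d$, the cell--boundary integrals over facets of $\mathcal F^d\setminus\mathcal F^I$ combine to $\sum_{F\in\mathcal F^d\setminus\mathcal F^I}\int_F\bar q^d\jump{v\cdot n}\,\dif s$ (with $\jump{v\cdot n}=v\cdot n$ on $\Gamma^d$), while the contributions over $\mathcal F^I$ together with $b_h^{I,d}$ yield $\sum_{K\in\mathcal T^d}\int_{\partial K\cap\Gamma^I}\bar q^d(v-\bar v)\cdot n^d\,\dif s$. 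I would then bound the volume term by $\norm{q}_{\Omega^d}\norm{\nabla\cdot v}_{\Omega^d}\le\tnorm{\boldsymbol q^d}_{p,d}\tnorm{\boldsymbol v}_{v,d}$; the facet--jump term by Cauchy--Schwarz together with $h_F\le h_K$ and $\sum_F h_F\norm{\bar q^d}_F^2\le c\sum_{K\in\mathcal T^d}h_K\norm{\bar q^d}_{\partial K}^2\le c\,\tnorm{\boldsymbol q^d}_{p,d}^2$; and the interface term analogously, the net result being that the $\Omega^d$ part is $\le c\,\tnorm{\boldsymbol q^d}_{p,d}\tnorm{\boldsymbol v}_{v,d}$.

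Adding the two estimates and using Cauchy--Schwarz in $\mathbb R^2$ gives $|b_h(\boldsymbol v,\boldsymbol q)|\le c\bigl(\tnorm{\boldsymbol q^s}_{p,s}\tnorm{\boldsymbol v}_{v,s}+\tnorm{\boldsymbol q^d}_{p,d}\tnorm{\boldsymbol v}_{v,d}\bigr)\le c\,\tnorm{\boldsymbol q}_p\bigl(\tnorm{\boldsymbol v}_{v,s}^2+\tnorm{\boldsymbol v}_{v,d}^2\bigr)^{1/2}\le c\,\tnorm{\boldsymbol q}_p\tnorm{\boldsymbol v}_v$, since $\tnorm{\boldsymbol v}_v^2\ge\tnorm{\boldsymbol v}_{v,s}^2+\tnorm{\boldsymbol v}_{v,d}^2$. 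There is no genuine analytic difficulty: every manipulation is valid verbatim on the extended spaces $\boldsymbol X(h)\times\boldsymbol Q(h)$, because only trace values already present in the norms, the elementary inequality $\norm{\nabla\cdot v}_K\le c\norm{\nabla v}_K$, and $h_F\le h_K$ are used. The one step that demands care is the bookkeeping that reorganises the coupling and interface integrals into the controlled quantities — and it is precisely this step that differs from the analogous estimate in \cite{Cesmelioglu:2020} because of the new Darcy norm.
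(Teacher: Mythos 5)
Your proposal is correct and follows essentially the same route as the paper: the same split into the $\Omega^s$ and $\Omega^d$ contributions, the same reorganisation of the cell--boundary and interface integrals into $(v-\bar v)\cdot n$ on $\partial K$ (respectively $\jump{v\cdot n}$ on $\mathcal F^d\setminus\mathcal F^I$ and $(v-\bar v)\cdot n$ on $\Gamma^I$), followed by cell- and facet-wise Cauchy--Schwarz against the $\tnorm{\cdot}_{v}$ and $\tnorm{\cdot}_{p}$ norms. You merely spell out the single-valuedness cancellations that the paper leaves implicit; no substantive difference.
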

\begin{proof}
  Note that
  \begin{equation*}
    b_h(\boldsymbol{v}, \boldsymbol{q})
    = \underbrace{b_h^s(v, \boldsymbol{q}^s) + b_h^{I,s}(\bar{v}, \bar{q}^s)}_{I_1}
    + \underbrace{b_h^d(v, \boldsymbol{q}^d) + b_h^{I,d}(\bar{v}, \bar{q}^d)}_{I_2}.
  \end{equation*}
  Let us consider $I_1$ and $I_2$ separately. For $I_1$, we have 
  \begin{equation*}
    \begin{split}
      \envert{I_1}
      =&
       \envert[2]{  -\sum_{K\in\mathcal{T}^s} \int_K q \nabla\cdot v \dif x
      + \sum_{K\in\mathcal{T}^s} \int_{\partial K} \bar{q}^s (v - \bar{v}) \cdot n^s \dif s }
      \\
      \le&
      \del[1]{\sum_{K \in \mathcal{T}^s} \norm[0]{\nabla v}_{K}^2 + \sum_{K \in \mathcal{T}^s} h_K^{-1}\norm[0]{v - \bar{v}}_{\partial K}^2 }^{1/2}
      \del[1]{\, \norm{q}_{\Omega^s}^2 + \sum_{K \in \mathcal{T}^s} h_K\norm[0]{\bar{q}^s}_{\partial K}^2 }^{1/2}
      \\
      \le& \tnorm{\boldsymbol{v}}_v \tnorm{\boldsymbol{q}}_p.     
    \end{split}
  \end{equation*}
  Next, consider $I_2$. We have
  \begin{equation*}
    \begin{split}
      |I_2|
      =&
      \envert[2]{-\sum_{K\in\mathcal{T}^d} \int_K q \nabla\cdot v \dif x
        + \sum_{F \in \mathcal{F}^d\backslash\mathcal{F}^I} \int_F \bar{q}^d \jump{v \cdot n} \dif s
        + \sum_{F \in \mathcal{F}^I} \int_F \bar{q}^d(v^d - \bar{v}) \cdot  n^d \dif s}
      \\      
      \le&
      \del[2]{\sum_{K\in\mathcal{T}^d}\norm[0]{\nabla \cdot v}_{K}^2
        + \sum_{F \in \mathcal{F}^d\backslash\mathcal{F}^I}h_F^{-1}\norm[0]{\jump{v\cdot n}}_F^2
        + \sum_{F \in \mathcal{F}^I}h_F^{-1}\norm[0]{(v^d - \bar{v})\cdot n}_F^2 }^{1/2}
      \\
      & \times \del[2]{\norm{q}^2_{\Omega^d} + \sum_{K \in \mathcal{T}^d}h_K\norm[0]{\bar{q}^d}_{\partial K}^2
        + \sum_{K \in \mathcal{T}^d}h_K\norm[0]{\bar{q}^d}_{\partial K}^2}^{1/2}
      \\
      \le& \sqrt{2}\tnorm{\boldsymbol{v}}_v \tnorm{\boldsymbol{q}}_p.      
    \end{split}
  \end{equation*}
  The result follows by combining the bounds for $|I_1|$ and $|I_2|$. \qed
\end{proof}

The form $t_h$ \cref{eq:oh} is new compared to
\cite{Cesmelioglu:2020}. We have the following properties of this
term.

\begin{lemma}
  \label{lem:boundedness_th}
  Let $w_1,w_2 \in X^s(h)$ such that $\nabla \cdot w_j = 0$ on each
  $K \in \mathcal{T}^s$ and $w_j \in H({\rm div};\Omega^s)$, for
  $j=1,2$. Then for any
  $\boldsymbol{u},\boldsymbol{v} \in \boldsymbol{X}(h)$, there exists
  a constant $c_w>0$ such that:
  \begin{equation*}
    \envert[0]{t_h(w_1; \boldsymbol{u}, \boldsymbol{v}) - t_h(w_2; \boldsymbol{u}, \boldsymbol{v})}
    \le c_w \norm{w_1 - w_2}_{1,h,\Omega^s} \tnorm{\boldsymbol{u}}_{v,s} \tnorm{\boldsymbol{v}}_{v,s}.
  \end{equation*}
\end{lemma}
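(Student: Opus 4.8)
The plan is to write $t_h(w_1;\boldsymbol{u},\boldsymbol{v}) - t_h(w_2;\boldsymbol{u},\boldsymbol{v})$ as $t_h(w_1-w_2;\boldsymbol{u},\boldsymbol{v})$, using the linearity of $t_h$ in its first argument (each of the four terms in \cref{eq:oh} is linear in $w$, except for the third term which involves $\envert{w\cdot n}$; see below). Setting $w := w_1-w_2$, the hypotheses give that $w \in X^s(h)$, $\nabla\cdot w = 0$ on each $K\in\mathcal{T}^s$, and $w\in H(\mathrm{div};\Omega^s)$, so in particular $w\cdot n$ is single-valued across interior facets of $\mathcal{T}^s$ and $w\cdot n = 0$ on $\Gamma^s$. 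Then I would bound the four terms of $t_h(w;\boldsymbol{u},\boldsymbol{v})$ one at a time. The $\envert{w\cdot n}$ term is the one place where linearity in $w$ fails, but $\envert{w_1\cdot n} - \envert{w_2\cdot n}$ is controlled by $\envert{(w_1-w_2)\cdot n} = \envert{w\cdot n}$ by the reverse triangle inequality, so that term is handled by the same estimate.

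The key estimates, in order, are: (1) for the volume term $\sum_K \int_K u\otimes w : \nabla v\,\dif x$, apply Hölder with exponents $(4,4,2)$ to get $\sum_K \norm{u}_{4,0,K}\norm{w}_{4,0,K}\norm{\nabla v}_K$, then use a discrete/continuous Sobolev embedding $\norm{u}_{4,0,\Omega^s}\lesssim \norm{u}_{1,h,\Omega^s}$ (the analog of \cref{eq:dtrpoincareineq}/\cref{eq:dpoincareineq} for the $L^4$ norm on $\Omega^s$, valid on $X^s(h)$) to absorb $\norm{u}_{4,0,K}$ into $\tnorm{\boldsymbol{u}}_{v,s}$, and similarly for $w$ and for $\norm{\nabla v}_K$ into $\tnorm{\boldsymbol{v}}_{v,s}$; (2) for the facet term $\tfrac12\sum_K \int_{\partial K} w\cdot n\,(u+\bar u)\cdot(v-\bar v)\,\dif s$, apply Hölder on $\partial K$ with $(4,4,2)$, use the discrete trace inequality $\norm{\phi}_{\partial K}\lesssim h_K^{-1/2}\norm{\phi}_K + h_K^{1/2}\norm{\nabla\phi}_K$ together with the $L^4$-trace embedding \cref{eq:dtrpoincareineq}-type bounds to convert $\norm{w\cdot n}_{4,0,\partial K}$, $\norm{(u+\bar u)}_{4,0,\partial K}$ into $\norm{w}_{1,h,\Omega^s}$ and $\tnorm{\boldsymbol{u}}_{v,s}$, and the $h_K^{-1/2}\norm{v-\bar v}_{\partial K}$ factor is exactly what appears in $\tnorm{\boldsymbol{v}}_{v,s}$; (3) the $\envert{w\cdot n}$ term is bounded identically to (2); (4) the interface term $\int_{\Gamma^I}(w\cdot n)\bar u\cdot\bar v\,\dif s$ is bounded by $\norm{w\cdot n}_{4,0,\Gamma^I}\norm{\bar u}_{4,0,\Gamma^I}\norm{\bar v}_{2,0,\Gamma^I}$ and again absorbed using \cref{eq:dtrpoincareineq} (for $r=2,4$), noting $w\in H(\mathrm{div};\Omega^s)$ so its normal trace on $\Gamma^I$ is well defined and controlled.

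The main obstacle is making precise the $L^4$-type bounds on the extended space $X^s(h)$: the cited results \cref{eq:dpoincareineq}--\cref{eq:dtrpoincareineq} are stated for $\boldsymbol{X}_h^s$, and here we need the volume $L^4$ bound $\norm{u}_{4,0,\Omega^s}\lesssim \norm{u}_{1,h,\Omega^s}$ and the facet $L^4$ bound for functions in $X^s(h)$, not just the discrete space; these follow from the broken Sobolev embedding of \cite{Girault:2009} on the discrete part plus the continuous Sobolev embedding $H^1(\Omega^s)\hookrightarrow L^4(\Omega^s)$ on the continuous part, and I would state this as a short auxiliary bound (or cite the relevant discrete-Sobolev inequality) before assembling the four estimates. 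Once that embedding is in hand, collecting the four bounds and using $\norm{w}_{1,h,\Omega^s}$ in place of all the $w$-factors gives the claimed inequality with $c_w$ depending only on the embedding constants, the shape-regularity constant, and $\dim$; in particular it is independent of $h$, $\mu$, $\kappa$, and $\alpha$.
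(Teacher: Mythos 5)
Your proposal is correct and uses the same essential toolkit as the paper: H\"older inequalities with $L^4$ exponents, broken Sobolev/trace embeddings of the type \cref{eq:dpoincareineq}--\cref{eq:dtrpoincareineq}, and standard inverse/trace inequalities, with the constant $c_w$ independent of $h$, $\mu$, $\kappa$, and $\alpha$. You also correctly isolate the one place where linearity in $w$ fails (the $\envert{w\cdot n}$ penalty term) and handle it via the reverse triangle inequality, and you correctly flag that the embeddings must be extended from $\boldsymbol{X}_h^s$ to $X^s(h)$, which is exactly the setting in which the lemma is later applied (with $w_1=u$). The only organizational difference is that the paper first integrates $t_h$ by parts (moving the derivative from $v$ onto $u$, which is legitimate since $w_j\in H(\mathrm{div};\Omega^s)$ with $\nabla\cdot w_j=0$) and rewrites the interface integral $\int_{\Gamma^I}(w_j\cdot n)\bar u\cdot\bar v\dif s$ as $\sum_{K\in\mathcal{T}^s}\int_{\partial K}(w_j\cdot n)\bar u\cdot\bar v\dif s$ --- using single-valuedness of $w_j\cdot n$, $\bar u$, $\bar v$ and $\bar u=\bar v=0$ on $\Gamma^s$ --- precisely so that the resulting expression matches a form already estimated in \cite[Proposition~3.4]{Cesmelioglu:2017}, to which the rest of the proof is deferred. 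Your route avoids the integration by parts and estimates the four terms of \cref{eq:oh} as written; this is self-contained but requires you to carry out the bookkeeping (in particular the $h$-scalings in the facet terms and the facet $L^4$--$L^2$ inverse estimate for $\bar u$) that the paper outsources to the cited reference. Both yield the stated bound.
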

\begin{proof}
  After integrating $t_h(w_j; \boldsymbol{u}, \boldsymbol{v})$ by parts and
  using that
  \begin{equation*}
    \int_{\Gamma^I} (w_j\cdot n)\bar{u}\cdot\bar{v} \dif s = \sum_{K \in
      \mathcal{T}^s} \int_{\partial K} (w_j \cdot n)\bar{u}\cdot\bar{v}
    \dif s, \quad j=1,2,
  \end{equation*}
  which holds since $w_j \cdot n$ is continuous, $\bar{u}$ and
  $\bar{v}$ are single-valued on element boundaries, and
  $\bar{u}=\bar{v}=0$ on $\Gamma^s$, the remainder of the proof is
  identical to that of the proof of \cite[Proposition
  3.4]{Cesmelioglu:2017}. \qed
\end{proof}

Next, we remark that for $w \in X^s(h) \cap H({\rm div};\Omega^s)$
such that $\nabla \cdot w = 0$ on each $K \in \mathcal{T}^s$, and
$\boldsymbol{v} \in \boldsymbol{X}(h)$ that
\begin{equation}
  \label{eq:coer_th}
  t_h(w; \boldsymbol{v}, \boldsymbol{v}) = \tfrac{1}{2} \sum_{K \in \mathcal{T}^s} \int_{\partial K} |w\cdot n||v-\bar{v}|^2 \dif s
  + \tfrac{1}{2}\int_{\Gamma^I} (w\cdot n) |\bar{v}|^2 \dif s.
\end{equation}

Combining some of the above results, we have the following two lemmas.

\begin{lemma}
  \label{lem:boundedness_ahwuv}
  Let $w \in X^s(h) \cap H({\rm div};\Omega^s)$ such that
  $\nabla \cdot w = 0$ on each $K \in \mathcal{T}^s$. Furthermore, let
  $\boldsymbol{u},\boldsymbol{v} \in \boldsymbol{X}(h)$ and
  $\boldsymbol{u}_h,\boldsymbol{v}_h \in \boldsymbol{X}_h$. Then
  \begin{equation*}
    \envert[0]{a_h(w;\boldsymbol{u},\boldsymbol{v})}
    \le c_{ac} \mu \tnorm{\boldsymbol{u}}_{v'}\tnorm{\boldsymbol{v}}_{v'},
  \end{equation*}
  with
  $c_{ac} = 2c_e^2\max(c_w\mu^{-1}\norm{w}_{1,h,\Omega^s} + c_{ac}^s,
  \kappa_{\min}^{-1}, \alpha \kappa_{\min}^{-1/2})$.
\end{lemma}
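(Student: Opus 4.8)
The plan is to bound each of the four constituent terms of $a_h$ in the decomposition \cref{eq:def_ah} separately and then recombine them. For the convective term $t_h$, the key point is that the zero vector field trivially meets the hypotheses of \cref{lem:boundedness_th} (it is divergence-free on each $K\in\mathcal{T}^s$ and lies in $H(\mathrm{div};\Omega^s)$) and that $t_h(0;\boldsymbol{u},\boldsymbol{v})=0$ by the definition \cref{eq:oh}. Hence \cref{lem:boundedness_th}, applied with $w_1=w$ and $w_2=0$, yields $\envert[0]{t_h(w;\boldsymbol{u},\boldsymbol{v})}\le c_w\norm[0]{w}_{1,h,\Omega^s}\tnorm{\boldsymbol{u}}_{v,s}\tnorm{\boldsymbol{v}}_{v,s}$. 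The remaining three terms $a_h^s$, $a^d$, $a^I$ are estimated directly by the continuity bounds \cref{eq:continuity-ah-sep-terms}, giving, respectively, $\mu c_{ac}^s\tnorm{\boldsymbol{u}}_{v',s}\tnorm{\boldsymbol{v}}_{v',s}$, $\mu\kappa_{\min}^{-1}\norm[0]{u}_{\Omega^d}\norm[0]{v}_{\Omega^d}$, and $\alpha\mu\kappa_{\min}^{-1/2}\norm[0]{\bar{u}^t}_{\Gamma^I}\norm[0]{\bar{v}^t}_{\Gamma^I}$.

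Next I would put all right-hand sides into a common form. Using $\tnorm{\cdot}_{v,s}\le\tnorm{\cdot}_{v',s}$, the $t_h$-bound combines with the $a_h^s$-bound into $(c_w\norm[0]{w}_{1,h,\Omega^s}+\mu c_{ac}^s)\tnorm{\boldsymbol{u}}_{v',s}\tnorm{\boldsymbol{v}}_{v',s}$; using $\norm[0]{v}_{\Omega^d}\le\norm[0]{v}_{\text{div}}\le\tnorm{\boldsymbol{v}}_{v,d}$ from the definition of $\tnorm{\cdot}_{v,d}$, the $a^d$-bound becomes $\mu\kappa_{\min}^{-1}\tnorm{\boldsymbol{u}}_{v,d}\tnorm{\boldsymbol{v}}_{v,d}$; and the $a^I$-bound already has the right shape. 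Factoring $\mu$ out of all three and bounding the scalar coefficients by their maximum $M:=\max(c_w\mu^{-1}\norm[0]{w}_{1,h,\Omega^s}+c_{ac}^s,\ \kappa_{\min}^{-1},\ \alpha\kappa_{\min}^{-1/2})$, a discrete Cauchy--Schwarz inequality applied to the sum of the three products, together with $\tnorm{\boldsymbol{v}}_{v'}^2=\tnorm{\boldsymbol{v}}_{v',s}^2+\tnorm{\boldsymbol{v}}_{v,d}^2+\norm[0]{\bar{v}^t}_{\Gamma^I}^2$, gives $\envert[0]{a_h(w;\boldsymbol{u},\boldsymbol{v})}\le \mu M\,\tnorm{\boldsymbol{u}}_{v'}\tnorm{\boldsymbol{v}}_{v'}$. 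Since $c_e\ge 1$, this already implies the asserted bound with $c_{ac}=2c_e^2 M$.

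I do not expect a genuine obstacle here; the statement is essentially an assembly of \cref{lem:boundedness_th} and \cref{eq:continuity-ah-sep-terms}. The only points needing care are: (i) invoking \cref{lem:boundedness_th} with the zero field in the first slot so that $t_h(w;\cdot,\cdot)$ itself, rather than a difference of two such forms, is controlled; (ii) the fact that $t_h$ is bounded only in the norm $\tnorm{\cdot}_{v,s}$ whereas $a_h^s$ requires the larger norm $\tnorm{\cdot}_{v',s}$ -- this is precisely why the final estimate is phrased in $\tnorm{\cdot}_{v'}$ and not $\tnorm{\cdot}_v$; and (iii) tracking the dependence of the constants on $\mu$, $\kappa_{\min}$ and $\alpha$ so that $M$ comes out in the advertised form. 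The factor $2c_e^2$ in $c_{ac}$ is a harmless overestimate of the sharp constant $M$ produced by the argument; the extra $c_e^2$ is exactly what one would spend if, restricting to $\boldsymbol{u}_h,\boldsymbol{v}_h\in\boldsymbol{X}_h$, one further replaced $\tnorm{\cdot}_{v'}$ by $\tnorm{\cdot}_v$ via the equivalence $\tnorm{\cdot}_{v'}\le c_e\tnorm{\cdot}_v$.
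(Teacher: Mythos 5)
Your proposal is correct and follows essentially the same route as the paper: both decompose $a_h$ into its four constituent forms, control $t_h$ via \cref{lem:boundedness_th} with $w_2=0$, apply the continuity bounds \cref{eq:continuity-ah-sep-terms} to the remaining terms, and recombine with a discrete Cauchy--Schwarz inequality and the definition of $\tnorm{\cdot}_{v'}$. Your remarks on the harmless overestimate in the constant and the role of $c_e\ge 1$ also match the paper's closing observation.
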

\begin{proof}
  Note that by \cref{eq:continuity-ah-sep-terms} and
  \cref{lem:boundedness_th} with $w_2=0$, 
  \begin{align*}
    \envert[0]{a_h(w;\boldsymbol{u},\boldsymbol{v})}
    \le& \envert[0]{t_h(w; \boldsymbol{u}, \boldsymbol{v})}
        + \envert[0]{a_h^s(\boldsymbol{u}, \boldsymbol{v})}
         + \envert[0]{a^d(u, v)} + \envert[0]{a^I(\bar{u}, \bar{v})}
    \\
    \le& c_w\norm{w}_{1,h,\Omega^s}\tnorm{\boldsymbol{u}}_{v,s} \tnorm{\boldsymbol{v}}_{v,s}
         + \mu c_{ac}^s \tnorm{\boldsymbol{u}}_{v',s} \tnorm{\boldsymbol{v}}_{v',s}
         \\
        &+ \mu \kappa_{\min}^{-1}\norm{u}_{\Omega^d}\norm{v}_{\Omega^d}
         + \alpha\mu\kappa_{\min}^{-1/2}\norm[0]{\bar{u}^t}_{\Gamma^I}\norm[0]{\bar{v}^t}_{\Gamma^I}
    \\
    \le& 2\max(c_w\norm{w}_{1,h,\Omega^s} + \mu c_{ac}^s, \mu\kappa_{\min}^{-1}, \alpha\mu\kappa_{\min}^{-1/2})
         \\
         & \times\del[1]{\tnorm{\boldsymbol{u}}_{v',s}^2 + \norm{u}_{\Omega^d}^2 + \norm[0]{\bar{u}^t}_{\Gamma^I}^2}^{1/2}
           \del[1]{\tnorm{\boldsymbol{v}}_{v',s}^2 + \norm{v}_{\Omega^d}^2 + \norm[0]{\bar{v}^t}_{\Gamma^I}^2}^{1/2}.
  \end{align*}
  The results follows by definition of $\tnorm{\cdot}_{v'}$ and using
  that the norm equivalency constant $c_e \ge 1$. \qed
\end{proof}

\begin{remark}
  \Cref{lem:boundedness_ahwuv} holds also for
  $\boldsymbol{u} \in \boldsymbol{X}_h$ with
  $\tnorm{\boldsymbol{u}}_{v'}$ replaced by
  $\tnorm{\boldsymbol{u}}_{v}$ and/or
  $\boldsymbol{v} \in \boldsymbol{X}_h$ with
  $\tnorm{\boldsymbol{v}}_{v'}$ replaced by
  $\tnorm{\boldsymbol{v}}_{v}$ due to the equivalence of the norms
  $\tnorm{\cdot}_{v}$ and $\tnorm{\cdot}_{v'}$ on $\boldsymbol{X}_h$.
\end{remark}

Using a similar approach as in \cite[Lemma 2]{Discacciati:2017}, we
show the coercivity of $a_h$.

\begin{lemma}
  \label{lem:coercivity_awhvhvh}
  Let $w \in X^s(h) \cap H({\rm div};\Omega^s)$ such that
  $\nabla \cdot w = 0$ on each $K \in \mathcal{T}^s$, and
  $\norm{w \cdot n}_{\Gamma^I} \le \mu c_{ae}^s \delta
  (c_{pq}^2+c_{si,4}^2)^{-1}$ with $0 < \delta < 1$. Then, for
  $\beta > \beta_0$,
  \begin{equation}
    \label{eq:coercivity_awhvhvh}
    a_h(w; \boldsymbol{v}_h, \boldsymbol{v}_h)
    \ge c_{ae} \mu
    \tnorm{\boldsymbol{v}_h}_{v}^2
    \quad \forall \boldsymbol{v}_h \in \boldsymbol{Z}_h,
  \end{equation}
  where
  $c_{ae} = \min\del[1]{(1-\delta) c_{ae}^s, \kappa_{\max}^{-1},
    \alpha\kappa_{\max}^{-1/2}} > 0$.
\end{lemma}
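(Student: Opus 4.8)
The strategy is to estimate the four pieces of $a_h(w;\boldsymbol{v}_h,\boldsymbol{v}_h)$ coming from the splitting \cref{eq:def_ah},
\[
  a_h(w;\boldsymbol{v}_h,\boldsymbol{v}_h)
  = t_h(w;\boldsymbol{v}_h,\boldsymbol{v}_h) + a_h^s(\boldsymbol{v}_h,\boldsymbol{v}_h) + a^d(v_h,v_h) + a^I(\bar{v}_h,\bar{v}_h),
\]
and to exploit that on $\boldsymbol{Z}_h$ the Darcy part of $\tnorm{\cdot}_v$ collapses. For the convective term, $w$ meets the hypotheses of \cref{eq:coer_th} (it lies in $X^s(h)\cap H({\rm div};\Omega^s)$ and is piecewise divergence free), whence
\[
  t_h(w;\boldsymbol{v}_h,\boldsymbol{v}_h)
  = \tfrac12\sum_{K\in\mathcal{T}^s}\int_{\partial K}\envert{w\cdot n}\,\envert{v_h-\bar{v}_h}^2\dif s
  + \tfrac12\int_{\Gamma^I}(w\cdot n)\envert{\bar{v}_h}^2\dif s
  \ge \tfrac12\int_{\Gamma^I}(w\cdot n)\envert{\bar{v}_h}^2\dif s,
\]
the element-boundary sum being nonnegative. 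So the only possibly negative contribution left to control is $\tfrac12\int_{\Gamma^I}(w\cdot n)\envert{\bar{v}_h}^2\dif s$.

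To bound it I would split $\envert{\bar{v}_h}^2=\envert{\bar{v}_h^t}^2+\envert{\bar{v}_h\cdot n}^2$ and apply Hölder on $\Gamma^I$ with exponents $2,4,4$ to each piece, obtaining
\[
  \Bigl|\int_{\Gamma^I}(w\cdot n)\envert{\bar{v}_h}^2\dif s\Bigr|
  \le \norm{w\cdot n}_{\Gamma^I}\del[1]{\norm[0]{\bar{v}_h^t}_{4,0,\Gamma^I}^2 + \norm[0]{\bar{v}_h\cdot n}_{4,0,\Gamma^I}^2}.
\]
Since $\boldsymbol{v}_h\in\boldsymbol{Z}_h$ satisfies \cref{eq:massconservations} with $f^d=0$, we have $v_h\cdot n=\bar{v}_h\cdot n$ on every facet of $\mathcal{F}^I$, so $\norm[0]{\bar{v}_h\cdot n}_{4,0,\Gamma^I}=\norm[0]{v_h\cdot n}_{4,0,\Gamma^I}\le c_{si,4}\tnorm{\boldsymbol{v}_h}_{v,s}$ by \cref{eq:dtrpoincareineq} with $r=4$. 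For the tangential part, writing $\bar{v}_h^t=(v_h)^t+(\bar{v}_h-v_h)^t$ and combining \cref{eq:dtrpoincareineq} (with $r=4$, applied to $(v_h)^t$) with a local inverse trace inequality on the interface facets together with $h_K^{-1}\norm[0]{v_h-\bar{v}_h}_{\partial K}^2\le\tnorm{\boldsymbol{v}_h}_{v,s}^2$, one obtains $\norm[0]{\bar{v}_h^t}_{4,0,\Gamma^I}\le c_{pq}\tnorm{\boldsymbol{v}_h}_{v,s}$. Hence
\[
  \Bigl|\int_{\Gamma^I}(w\cdot n)\envert{\bar{v}_h}^2\dif s\Bigr|
  \le \norm{w\cdot n}_{\Gamma^I}\,(c_{pq}^2+c_{si,4}^2)\,\tnorm{\boldsymbol{v}_h}_{v,s}^2
  \le \delta\mu c_{ae}^s\tnorm{\boldsymbol{v}_h}_{v,s}^2
\]
by the smallness assumption on $\norm{w\cdot n}_{\Gamma^I}$, and therefore $t_h(w;\boldsymbol{v}_h,\boldsymbol{v}_h)\ge-\tfrac12\delta\mu c_{ae}^s\tnorm{\boldsymbol{v}_h}_{v,s}^2\ge-\delta\mu c_{ae}^s\tnorm{\boldsymbol{v}_h}_{v,s}^2$.

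For the three remaining, symmetric, terms I would invoke the coercivity estimates \cref{eq:coercivity_ahsdI}, valid since $\beta>\beta_0$ and $\boldsymbol{v}_h\in\boldsymbol{X}_h$: $a_h^s(\boldsymbol{v}_h,\boldsymbol{v}_h)\ge\mu c_{ae}^s\tnorm{\boldsymbol{v}_h}_{v,s}^2$, $a^d(v_h,v_h)\ge\mu\kappa_{\max}^{-1}\norm{v_h}_{\Omega^d}^2$ and $a^I(\bar{v}_h,\bar{v}_h)\ge\alpha\mu\kappa_{\max}^{-1/2}\norm[0]{\bar{v}_h^t}_{\Gamma^I}^2$. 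Adding the four bounds,
\[
  a_h(w;\boldsymbol{v}_h,\boldsymbol{v}_h)
  \ge (1-\delta)\mu c_{ae}^s\tnorm{\boldsymbol{v}_h}_{v,s}^2 + \mu\kappa_{\max}^{-1}\norm{v_h}_{\Omega^d}^2 + \alpha\mu\kappa_{\max}^{-1/2}\norm[0]{\bar{v}_h^t}_{\Gamma^I}^2
  \ge c_{ae}\mu\del[1]{\tnorm{\boldsymbol{v}_h}_{v,s}^2 + \norm{v_h}_{\Omega^d}^2 + \norm[0]{\bar{v}_h^t}_{\Gamma^I}^2}.
\]
Finally, using once more that $\boldsymbol{v}_h\in\boldsymbol{Z}_h$ satisfies \cref{eq:massconservations} with $f^d=0$, i.e.\ $\nabla\cdot v_h=0$ on every cell, $\jump{v_h\cdot n}=0$ on every facet, and $v_h\cdot n=\bar{v}_h\cdot n$ on $\mathcal{F}^I$, every term of $\tnorm{\boldsymbol{v}_h}_{v,d}$ beyond $\norm{v_h}_{\Omega^d}^2$ vanishes, so $\tnorm{\boldsymbol{v}_h}_{v,d}^2=\norm{v_h}_{\Omega^d}^2$ and the right-hand side is exactly $c_{ae}\mu\tnorm{\boldsymbol{v}_h}_v^2$.

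The main obstacle is the $L^4(\Gamma^I)$ trace estimate for the facet unknown $\bar{v}_h$: the discrete Sobolev trace inequality \cref{eq:dtrpoincareineq} is available only for the interior unknown $v_h^s$, so transferring it to $\bar{v}_h$ --- in particular to the tangential component $\bar{v}_h^t$, for which the $a^I$ term only provides $L^2(\Gamma^I)$ control --- requires a local inverse trace inequality, and this is exactly where the constant $c_{pq}$ in the smallness hypothesis originates; the prefactor $\mu c_{ae}^s$ there is precisely what renders the interface integral absorbable into the coercivity of $a_h^s$. Everything else is a reassembly of positivity and coercivity estimates already established, together with the collapse of $\tnorm{\cdot}_{v,d}$ on $\boldsymbol{Z}_h$.
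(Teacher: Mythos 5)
Your proof follows essentially the same strategy as the paper's: start from the identity \cref{eq:coer_th}, discard the nonnegative element-boundary sum, bound the remaining signed interface integral by $\norm{w\cdot n}_{\Gamma^I}$ times an $L^4(\Gamma^I)$ estimate controlled by $\tnorm{\boldsymbol{v}_h}_{v,s}^2$, absorb it into the coercivity of $a_h^s$ via the smallness hypothesis, add the coercivity bounds \cref{eq:coercivity_ahsdI}, and use that $\tnorm{\boldsymbol{v}_h}_{v,d}^2$ collapses to $\norm{v_h}_{\Omega^d}^2$ on $\boldsymbol{Z}_h$. The one place you diverge is the decomposition of $\envert{\bar v_h}^2$: the paper uses $\envert{\bar v_h}^2\le 2\envert{\bar v_h - v_h^s}^2 + 2\envert{v_h^s}^2$, bounding the first piece by the $L^4$--$L^2$ scaling identity on facets (which is exactly where $c_{pq}^2$ comes from) and the second by \cref{eq:dtrpoincareineq} with $r=4$ (which gives $c_{si,4}^2$); this is precisely how the constant $(c_{pq}^2+c_{si,4}^2)$ in the hypothesis arises. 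You instead split into tangential and normal components. That works in spirit, but your claimed tangential bound $\norm{\bar v_h^t}_{4,0,\Gamma^I}\le c_{pq}\tnorm{\boldsymbol{v}_h}_{v,s}$ does not follow from your own decomposition $\bar v_h^t=(v_h)^t+(\bar v_h-v_h)^t$: the first summand contributes a $c_{si,4}$ via \cref{eq:dtrpoincareineq} and only the second contributes the inverse-trace constant, so you actually obtain a constant of the form $(c_{si,4}+C c_{pq})^2+c_{si,4}^2$ rather than $c_{pq}^2+c_{si,4}^2$, and the smallness hypothesis of the lemma would have to be recalibrated to match. (Your version does retain the factor $\tfrac12$ from \cref{eq:coer_th}, which the paper instead spends on the factor $2$ in its splitting, so the two losses roughly offset and the qualitative conclusion is unaffected.) Everything else --- the handling of the normal component via $v_h\cdot n=\bar v_h\cdot n$ on $\Gamma^I$ for $\boldsymbol{v}_h\in\boldsymbol{Z}_h$, the absorption step, and the norm collapse on $\boldsymbol{Z}_h$ --- matches the paper's argument.
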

\begin{proof}
  Let $\boldsymbol{v}_h \in \boldsymbol{Z}_h$. From \cref{eq:coer_th}
  we note that
  \begin{equation}
    \label{eq:thgeterms}
    t_h(w; \boldsymbol{v}_h, \boldsymbol{v}_h)
    \ge
    - \tfrac{1}{2}\int_{\Gamma^I} \envert{w \cdot n} |\bar{v}_h|^2 \dif s
    \ge
    - \int_{\Gamma^I} \envert{w \cdot n} |\bar{v}_h-v_h^s|^2 \dif s - \int_{\Gamma^I} \envert{w \cdot n} |v_h^s|^2 \dif s,
  \end{equation}
  where the second inequality is due to
  $|\bar{v}_h|^2 \le 2|\bar{v}_h - v_h^s|^2 + 2|v_h^s|^2$. Let us
  consider each term on the right hand side of \cref{eq:thgeterms}
  separately. For the first term we find, using the Cauchy--Schwarz
  inequality,
  \begin{equation*}
    \int_{\Gamma^I} \envert{w \cdot n} |\bar{v}_h-v_h^s|^2 \dif s
    \le \norm{w\cdot n}_{\Gamma^I} \norm[0]{\bar{v}_h-v_h^s}_{4,0,\Gamma^I}^2.
  \end{equation*}
  By a scaling identity, for $\mu \in R_k(\partial K)$, we have that
  there exists a positive constant $c_{pq}$ independent of $h$ such
  that
  $\norm[0]{\mu}_{4,0,\partial K} \le c_{pq}
  h^{(1-d)/4}\norm[0]{\mu}_{\partial K}$. We therefore find:
  \begin{equation}
    \label{eq:thgeterms_t1}
    \int_{\Gamma^I} \envert{w \cdot n} |\bar{v}_h-v_h^s|^2 \dif s
    \le c_{pq}^2\norm{w\cdot n}_{\Gamma^I} h^{(1-d)/2}\norm[0]{\bar{v}_h-v_h^s}_{\Gamma^I}^2
    \le c_{pq}^2\norm{w\cdot n}_{\Gamma^I} \tnorm{\boldsymbol{v}_h}_{v,s}^2,
  \end{equation}
  where the second inequality is true for $d=2,3$. For the second term on
  the right hand side of \cref{eq:thgeterms}, using the
  Cauchy--Schwarz inequality and \cref{eq:dtrpoincareineq} with $r=4$,
  we obtain:
  \begin{equation}
    \label{eq:thgeterms_t2}
    \int_{\Gamma^I} \envert{w \cdot n} |v_h^s|^2 \dif s
    \le \norm{w\cdot n}_{\Gamma_I} \norm{v_h^s}^2_{4,0,\Gamma^I}
    \le c_{si,4}^2\norm{w\cdot n}_{\Gamma_I}\tnorm{\boldsymbol{v}_h}_{v,s}^2.
  \end{equation}
  Combining \cref{eq:thgeterms,eq:thgeterms_t1,eq:thgeterms_t2}, we
  find
  $ t_h(w; \boldsymbol{v}_h, \boldsymbol{v}_h) \ge
  -(c_{pq}^2+c_{si,4}^2)\norm{w\cdot
    n}_{\Gamma_I}\tnorm{\boldsymbol{v}_h}_{v,s}^2$. Using this
  inequality together with \cref{eq:coercivity_ahsdI},
  \begin{equation*}
    \begin{split}
      a_h(w; \boldsymbol{v}_h, \boldsymbol{v}_h)
      \ge &
      \mu c_{ae}^s \tnorm{\boldsymbol{v}_h}_{v,s}^2
      + \mu \kappa_{\max}^{-1} \norm{v_h}_{\Omega^d}^2
      + \alpha\mu\kappa_{\max}^{-1/2}\norm[0]{\bar{v}_h^t}_{\Gamma^I}^2
       -(c_{pq}^2+c_{si,4}^2)\norm{w\cdot n}_{\Gamma_I}\tnorm{\boldsymbol{v}_h}_{v,s}^2
      \\
      \ge &
      \del[1]{\mu c_{ae}^s - (c_{pq}^2+c_{si,4}^2)\norm{w\cdot n}_{\Gamma_I}}\tnorm{\boldsymbol{v}_h}_{v,s}^2
      + \mu \kappa_{\max}^{-1} \norm{v_h}_{\Omega^d}^2
      + \alpha\mu\kappa_{\max}^{-1/2}\norm[0]{\bar{v}_h^t}_{\Gamma^I}^2
      \\
      \ge &
      (1-\delta)\mu c_{ae}^s\tnorm{\boldsymbol{v}_h}_{v,s}^2
      + \mu \kappa_{\max}^{-1} \norm{v_h}_{\Omega^d}^2
      + \alpha\mu\kappa_{\max}^{-1/2}\norm[0]{\bar{v}_h^t}_{\Gamma^I}^2,
    \end{split}
  \end{equation*}
  where the last step is by the assumption on
  $\norm{w \cdot n}_{\Gamma^I}$. The result follows by definition of
  $\tnorm{\boldsymbol{v}_h}_v$ noting that
  $\tnorm{\boldsymbol{v}_h}_{v,d} = \norm{v_h}_{\Omega^d}^2$ for
  $\boldsymbol{v}_h \in \boldsymbol{Z}_h$. \qed
\end{proof}

Let us define the following space:
\begin{equation*}
  \label{eq:def_Bhs}
  \boldsymbol{B}_h^s := \cbr[1]{ \boldsymbol{v}_h \in \boldsymbol{Z}_h^s:\
    \tnorm{\boldsymbol{v}_h}_{v,s} \le c_{ae}^{-1}\del[1]{\mu^{-1}c_p\norm[0]{f^s}_{\Omega^s} + 2c_fc_{bb}^{-1} \norm[0]{f^d}_{\Omega^d}}},
\end{equation*}
where
$c_f =
2c_e^2\max\del[0]{c_wc_{ae}^s\delta/(c_{si,2}(c_{pq}^2+c_{si,4}^2)) +
  c_{ac}^s, \kappa_{\min}^{-1},\alpha\kappa_{\min}^{-1/2}}$.

Similar to \cite{Caucao:2020}, let us also define the fixed point
operator
$\boldsymbol{\Psi}_h : \boldsymbol{B}_h^s \to \boldsymbol{B}_h^s$ by
$\boldsymbol{\Psi}_h(\boldsymbol{w}_h^s):=\boldsymbol{u}_h^s$ for all
$\boldsymbol{w}_h^s \in \boldsymbol{B}_h^s$ where $\boldsymbol{u}_h^s$
is the restriction of $\boldsymbol{u}_h$ to $\Omega^s$ and
$\boldsymbol{u}_h$ is the solution to the linear problem:
Given $w_h \in X_h^s$ that satisfies all the conditions in
\cref{lem:coercivity_awhvhvh},
$f^s \in \sbr[0]{L^2(\Omega^s)}^{\dim}$, and $f^d \in L^2(\Omega^d)$,
find
$(\boldsymbol{u}_h, \boldsymbol{p}_h) \in \boldsymbol{X}_h \times
\boldsymbol{Q}_h$ such that
\begin{equation*}
  a_h(w_h; \boldsymbol{u}_h, \boldsymbol{v}_h) + b_h(\boldsymbol{v}_h, \boldsymbol{p}_h)
  + b_h(\boldsymbol{u}_h, \boldsymbol{q}_h)
  = \ell^s(v_h) + \ell^d(q_h),
  \quad \forall (\boldsymbol{v}_h, \boldsymbol{q}_h) \in \boldsymbol{X}_h \times \boldsymbol{Q}_h.
\end{equation*}  
Then
$(\boldsymbol{u}_h, \boldsymbol{p}_h) \in \boldsymbol{X}_h \times
\boldsymbol{Q}_h$ is the solution to the Navier--Stokes/Darcy problem
\cref{eq:hdg} if and only if
$\boldsymbol{\Psi}_h(\boldsymbol{u}_h^s)=\boldsymbol{u}_h^s$.

Using Brouwer's and Banach's fixed point theorems, well-posedness of
the Navier--Stokes/Darcy problem \cref{eq:hdg} now follows using
Lemmas \ref{lem:consistency} to \ref{lem:coercivity_awhvhvh} by
showing that $\boldsymbol{\Psi}_h$ has a unique fixed point. See
\cite{Discacciati:2017} for details. In particular, the following
result holds:

\begin{theorem}
  Let $f^s \in \sbr[0]{L^2(\Omega^s)}^{\dim}$ and
  $f^d \in L^2(\Omega^d)$ satisfy assumption
  \begin{equation}
    \label{eq:assumption_fs_fd}
    c_p\norm[0]{f^s}_{\Omega^s} + 2\mu c_fc_{bb}^{-1} \norm[0]{f^d}_{\Omega^d}
    \le \mu^2 c_{ae}c_{ae}^s\delta c_{si,2}^{-1}(c_{pq}^2 + c_{si,4}^2)^{-1}.
  \end{equation}
  Then there exists a solution
  $(\boldsymbol{u}_h, \boldsymbol{p}_h) \in \boldsymbol{X}_h \times
  \boldsymbol{Q}_h$ to \cref{eq:hdg} that satisfies
  \begin{subequations}
    \label{eq:existunique-hdg}
    \begin{align}
      \label{eq:existunique-hdg-a}
      \tnorm{\boldsymbol{u}_h}_v
      &\le c_{ae}^{-1}\del[1]{ \mu^{-1} c_p\norm[0]{f^s}_{\Omega^s} + 2c_fc_{bb}^{-1} \norm[0]{f^d}_{\Omega^d} },
      \\
      \label{eq:existunique-hdg-b}
      \tnorm{\boldsymbol{p}_h}_p
      &\le 2c_fc_{bb}^{-1} c_{ae}^{-1}\del[1]{ c_p \norm[0]{f^s}_{\Omega^s} + \mu c_fc_{bb}^{-1} \norm[0]{f^d}_{\Omega^d} }.
    \end{align}
  \end{subequations}
  This solution is unique if the data also satisfy
  \begin{equation}
    \label{eq:data_unique_sol_uh}
    c_p\norm[0]{f^s}_{\Omega^s} + 2\mu c_fc_{bb}^{-1} \norm[0]{f^d}_{\Omega^d}
    < \mu^2c_{ae} \min\del[1]{c_{ae} c_w^{-1}, c_{ae}^s\delta c_{si,2}^{-1}(c_{pq}^2 + c_{si,4}^2)^{-1}}.
  \end{equation}
\end{theorem}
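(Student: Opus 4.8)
The plan is to follow the two-stage fixed-point strategy indicated just above the statement: use Brouwer's theorem to obtain a solution satisfying \cref{eq:existunique-hdg}, and Banach's theorem to upgrade to uniqueness under \cref{eq:data_unique_sol_uh}. The first ingredient is well-posedness of the linear problem defining $\boldsymbol{\Psi}_h$: for any $w_h\in X_h^s$ satisfying the hypotheses of \cref{lem:coercivity_awhvhvh}, the form $a_h(w_h;\cdot,\cdot)$ is bounded on $\boldsymbol{X}_h\times\boldsymbol{X}_h$ by \cref{lem:boundedness_ahwuv} and coercive on $\boldsymbol{Z}_h$ by \cref{lem:coercivity_awhvhvh}, while $b_h$ is bounded by \cref{lem:boundedbh} and inf-sup stable by \cref{lem:infsupbh}; the Babu\v ska--Brezzi theorem then yields a unique solution $(\boldsymbol{u}_h,\boldsymbol{p}_h)\in\boldsymbol{X}_h\times\boldsymbol{Q}_h$, so $\boldsymbol{\Psi}_h$ is well-defined, and it is continuous because the solution of a finite-dimensional linear system depends continuously on its coefficients, the dependence on $w_h$ being controlled by \cref{lem:boundedness_th}.

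Next I establish the a priori bounds \cref{eq:existunique-hdg} for the linear solution; once a fixed point is found these become the theorem's bounds. Since $\ell^d$ places a source in $\Omega^d$, $\boldsymbol{u}_h\notin\boldsymbol{Z}_h$ in general, so I use \cref{lem:infsupbh} with $\envert{\ell^d(q_h)}\le\norm{f^d}_{\Omega^d}\tnorm{\boldsymbol{q}_h}_p$ from \cref{eq:boundsfsfd} to split $\boldsymbol{u}_h=\boldsymbol{u}_h^0+\boldsymbol{u}_h^\ell$ with $\boldsymbol{u}_h^0\in\boldsymbol{Z}_h$ and $\tnorm{\boldsymbol{u}_h^\ell}_v\le c_{bb}^{-1}\norm{f^d}_{\Omega^d}$. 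Testing the linear equation with $\boldsymbol{v}_h=\boldsymbol{u}_h^0$, $\boldsymbol{q}_h=0$ annihilates $b_h(\boldsymbol{u}_h^0,\boldsymbol{p}_h)$; moving $a_h(w_h;\boldsymbol{u}_h^\ell,\boldsymbol{u}_h^0)$ to the right and applying coercivity on the left, \cref{lem:boundedness_ahwuv} and \cref{eq:boundsfsfd} on the right bounds $\tnorm{\boldsymbol{u}_h^0}_v$, whence a triangle inequality gives \cref{eq:existunique-hdg-a}. For the pressure I use \cref{lem:infsupbh} again with $b_h(\boldsymbol{v}_h,\boldsymbol{p}_h)=\ell^s(v_h)-a_h(w_h;\boldsymbol{u}_h,\boldsymbol{v}_h)$, so $c_{bb}\tnorm{\boldsymbol{p}_h}_p\le c_p\norm{f^s}_{\Omega^s}+c_{ac}\mu\tnorm{\boldsymbol{u}_h}_v$, and substituting \cref{eq:existunique-hdg-a} yields \cref{eq:existunique-hdg-b}. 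The delicate point is that $c_{ac}$ from \cref{lem:boundedness_ahwuv} depends on $\norm{w_h}_{1,h,\Omega^s}$, and one must check, using $c_e\ge1$, $\kappa_{\min}\le\kappa_{\max}$ and the explicit form of $c_f$, that $c_{ac}\le c_f$ and $c_{ae}\le c_f$ whenever $\norm{w_h}_{1,h,\Omega^s}\le\tnorm{\boldsymbol{w}_h^s}_{v,s}$ lies within the radius defining $\boldsymbol{B}_h^s$; this is where \cref{eq:assumption_fs_fd} is spent, since it is equivalent to $c_{ae}^{-1}\del[0]{\mu^{-1}c_p\norm{f^s}_{\Omega^s}+2c_fc_{bb}^{-1}\norm{f^d}_{\Omega^d}}\le\mu c_{ae}^s\delta c_{si,2}^{-1}(c_{pq}^2+c_{si,4}^2)^{-1}$, which via $\norm{w_h\cdot n}_{\Gamma^I}\le\norm{w_h^s}_{2,0,\Gamma^I}\le c_{si,2}\tnorm{\boldsymbol{w}_h^s}_{v,s}$ guarantees the hypothesis of \cref{lem:coercivity_awhvhvh} for every $w_h$ with $\boldsymbol{w}_h^s\in\boldsymbol{B}_h^s$.

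Granting this, for $\boldsymbol{w}_h^s\in\boldsymbol{B}_h^s$ the linear solution exists, $\boldsymbol{u}_h^s\in\boldsymbol{Z}_h^s$ by the divergence structure of $b_h$ (as for \cref{eq:massconservations}), and \cref{eq:existunique-hdg-a} with $\tnorm{\boldsymbol{u}_h^s}_{v,s}\le\tnorm{\boldsymbol{u}_h}_v$ shows $\boldsymbol{\Psi}_h(\boldsymbol{w}_h^s)\in\boldsymbol{B}_h^s$; since $\boldsymbol{B}_h^s$ is a nonempty closed bounded convex subset of a finite-dimensional space, hence compact, Brouwer's theorem gives a fixed point, and the associated linear solution solves \cref{eq:hdg} and obeys \cref{eq:existunique-hdg}. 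For uniqueness under \cref{eq:data_unique_sol_uh} (which is strictly stronger than \cref{eq:assumption_fs_fd}, so existence persists), let $\boldsymbol{w}_{h,1}^s,\boldsymbol{w}_{h,2}^s\in\boldsymbol{B}_h^s$ with images $\boldsymbol{u}_{h,i}^s$ and full linear solutions $(\boldsymbol{u}_{h,i},\boldsymbol{p}_{h,i})$. Subtracting the two linear problems, the difference $\boldsymbol{e}_h:=\boldsymbol{u}_{h,1}-\boldsymbol{u}_{h,2}$ lies in $\boldsymbol{Z}_h$ (the $\ell^d$ terms cancel), and only the $t_h$ part of $a_h$ sees the first argument, so testing with $\boldsymbol{v}_h=\boldsymbol{e}_h$, $\boldsymbol{q}_h=0$ gives $a_h(w_{h,1};\boldsymbol{e}_h,\boldsymbol{e}_h)=-\del[0]{t_h(w_{h,1};\boldsymbol{u}_{h,2},\boldsymbol{e}_h)-t_h(w_{h,2};\boldsymbol{u}_{h,2},\boldsymbol{e}_h)}$; bounding below by \cref{lem:coercivity_awhvhvh}, above by \cref{lem:boundedness_th}, estimating $\tnorm{\boldsymbol{u}_{h,2}}_{v,s}$ by the $\boldsymbol{B}_h^s$-radius and dividing by $\tnorm{\boldsymbol{e}_h}_v$ yields $\tnorm{\boldsymbol{e}_h}_v\le c_w c_{ae}^{-2}\mu^{-1}\del[0]{\mu^{-1}c_p\norm{f^s}_{\Omega^s}+2c_fc_{bb}^{-1}\norm{f^d}_{\Omega^d}}\tnorm{\boldsymbol{w}_{h,1}^s-\boldsymbol{w}_{h,2}^s}_{v,s}$, and the first entry of the minimum in \cref{eq:data_unique_sol_uh} is exactly the requirement that this factor be $<1$; Banach's theorem then gives a unique fixed point, and since \cref{lem:infsupbh} determines the pressure from the velocity, the solution of \cref{eq:hdg} is unique. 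The main obstacle is not the abstract machinery, which is routine, but the constant bookkeeping just described---verifying that $c_f$, the radius of $\boldsymbol{B}_h^s$, and the two smallness conditions are mutually consistent---together with the lifting decomposition needed to accommodate $f^d\ne0$, which is the new feature relative to \cite{Discacciati:2017}.
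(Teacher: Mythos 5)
Your proposal is correct and follows exactly the route the paper takes: the paper's own "proof" is precisely the sketch preceding the theorem (the fixed-point map $\boldsymbol{\Psi}_h$ on $\boldsymbol{B}_h^s$, Brouwer for existence, Banach for uniqueness, with the details deferred to \cite{Discacciati:2017}), and your write-up fills in those details faithfully, including the lifting needed for $f^d\neq 0$ and the constant bookkeeping showing that \cref{eq:assumption_fs_fd} activates \cref{lem:coercivity_awhvhvh} on $\boldsymbol{B}_h^s$ and that \cref{eq:data_unique_sol_uh} is exactly the contraction condition.
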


\section{Error analysis}
\label{s:errorAnalysis}

Besides the BDM interpolation operator
$\Pi_V:\sbr[0]{H^1(\Omega)}^{\dim} \to X_h \cap H(\text{div};\Omega)$
satisfying \cref{eq:PiVinterpOp,eq:PiVintest}, and the
$L^2$-projection
$\bar{\Pi}_V:\sbr[0]{H^1(\Omega^s)}^{\dim} \to \bar{X}_h$ used
previously in \cref{ss:prelim}, let $\Pi_Q$ and $\bar{\Pi}_Q^j$ be the
$L^2$-projection operators onto, respectively, $Q_h$, and
$\bar{Q}_h^j$, $j=s,d$. The interpolation and approximation errors are
defined as:
\begin{equation}
  \label{eq:inter_approx_errors}
\begin{aligned}
  e_u^I &= u - \Pi_Vu, & e_p^I &= p - \Pi_Qp, & \bar{e}_u^I &= \gamma(u)-\bar{\Pi}_Vu, & \bar{e}_{p,j}^I &= \gamma(p) - \bar{\Pi}_Q^jp,
  \\
  e_u^h &= u_h - \Pi_Vu, & e_p^h &= p_h - \Pi_Qp, & \bar{e}_u^h &= \bar{u}_h-\bar{\Pi}_Vu, & \bar{e}_{p,j}^h &= \bar{p}_h^j - \bar{\Pi}_Q^jp.
\end{aligned}  
\end{equation}
Note that $u - u_h = e_u^I - e_u^h$ and likewise for the other
unknowns. Similar to the notation used in previous sections, we write
$\boldsymbol{e}_u^r = (e_u^r, \bar{e}_u^r)$,
$\boldsymbol{e}_{u,s}^r = (e_{u,s}^r,\bar{e}_u^r)$,
$\boldsymbol{e}_{p,j}^r = (e_p^r, \bar{e}_{p,j}^r)$,
$\boldsymbol{e}_p^r = (e_p^r, \bar{e}_{p,s}^r, \bar{e}_{p,d}^r)$, for
$j=s,d$, $r=I,h$, and we remark that $e_{u,s}^r$ is the restriction of
$e_u^r$ to $\Omega^s$. From \cite[Lemma 8]{Cesmelioglu:2020} we have
that for $p^j \in H^l(\Omega^j)$, $0 \le l \le k$, $j=s,d$, that
\begin{equation}
  \label{eq:interpestimate_p}
  \tnorm{\boldsymbol{e}_p^I}_p \le C h^l\norm[0]{p}_{l,\Omega}.
\end{equation}
The following lemma, which is a modification of \cite[Lemma
7]{Cesmelioglu:2020}, determines the interpolation estimate for the
velocity field.

\begin{lemma}
  Suppose that $u \in \sbr[0]{H^l(\Omega)}^{\dim}$ for
  $2 \le l \le k+1$. Then
  \begin{equation}
    \label{eq:interpestimate_u}
    \tnorm{\boldsymbol{e}_u^I}_{v'} \le C h^{l-1}\norm{u}_{l,\Omega}.
  \end{equation}
\end{lemma}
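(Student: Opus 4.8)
The plan is to unfold the definition of $\tnorm{\cdot}_{v'}$ and estimate each of the constituent pieces of $\boldsymbol{e}_u^I = (e_u^I, \bar{e}_u^I)$ separately, using the BDM interpolation estimates \cref{eq:PiVintest} for the volume terms and standard trace/scaling arguments together with the $L^2$-projection error bounds for $\bar{\Pi}_V$ on the facets. Recalling that
\begin{equation*}
  \tnorm{\boldsymbol{e}_u^I}_{v'}^2 = \tnorm{\boldsymbol{e}_u^I}_{v',s}^2 + \tnorm{\boldsymbol{e}_u^I}_{v,d}^2 + \norm[0]{(\bar{e}_u^I)^t}_{\Gamma^I}^2,
\end{equation*}
I would treat the $\Omega^s$-contribution, the $\Omega^d$-contribution, and the interface tangential term in turn. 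The $\tnorm{\cdot}_{v',s}$ part contains $\sum_K \norm[0]{\nabla e_{u,s}^I}_K^2$, $\sum_K h_K^{-1}\norm[0]{e_{u,s}^I - \bar{e}_u^I}_{\partial K}^2$, and $\sum_K h_K^2 \envert{e_{u,s}^I}_{2,K}^2$; all three are handled exactly as in \cite[Lemma 7]{Cesmelioglu:2020}, giving an $O(h^{l-1}\norm{u}_{l,\Omega^s})$ bound — the first and third directly from \cref{eq:PiVintest-u} with $m=1,2$, and the jump term by writing $e_{u,s}^I - \bar{e}_u^I = (u - \Pi_Vu) - (\gamma u - \bar{\Pi}_Vu)$, applying a discrete trace inequality $\norm[0]{\cdot}_{\partial K}^2 \lesssim h_K^{-1}\norm[0]{\cdot}_K^2 + h_K\envert{\cdot}_{1,K}^2$ to $u - \Pi_Vu$ and an $L^2$-projection trace estimate to the facet term.

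The genuinely new piece — and the reason this is stated as a separate lemma — is the Darcy-side norm $\tnorm{\boldsymbol{e}_u^I}_{v,d}^2$, which by \cref{rem:different_norm} was absent from the corresponding result in \cite{Cesmelioglu:2020}. Here I would expand
\begin{equation*}
  \tnorm{\boldsymbol{e}_u^I}_{v,d}^2 = \norm[0]{e_u^I}_{\mathrm{div}}^2 + \sum_{F \in \mathcal{F}^d\backslash\mathcal{F}^I} h_F^{-1}\norm[0]{\jump{e_u^I\cdot n}}_F^2 + \sum_{K\in\mathcal{T}^d} h_K^{-1}\norm[0]{(e_u^I - \bar{e}_u^I)\cdot n}_{\partial K\cap\Gamma^I}^2.
\end{equation*}
The $\norm[0]{\cdot}_{\mathrm{div}}$ term splits into $\norm[0]{e_u^I}_{\Omega^d}^2 + \norm[0]{\nabla\cdot e_u^I}_{\Omega^d}^2$ and is bounded by $O(h^{l-1})$ using \cref{eq:PiVintest-u} with $m=0$ (giving $h^l$, more than enough) and \cref{eq:PiVintest-divu} (giving $h^l\norm[0]{\nabla\cdot u}_{l,\Omega^d}$, controlled by $\norm{u}_{l+1}$ or, keeping to the stated regularity, by $h^{l-1}\norm{u}_{l,\Omega^d}$ via $0\le l-1\le k$). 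The jump term over interior Darcy facets vanishes because $\Pi_V u \in H(\mathrm{div};\Omega^d)$ and $u$ is likewise normal-continuous, so $\jump{e_u^I\cdot n}=0$; this is the key structural observation. For the interface term I would mimic the $I_3$ estimate in the proof of \cref{lem:infsupbh}: use that $\bar{\Pi}_V$ is the $L^2$-projection and a scaled trace inequality to get $h_K^{-1}\norm[0]{(e_u^I - \bar{e}_u^I)\cdot n}_{\partial K\cap\Gamma^I}^2 \lesssim h_K^{-1}\big(h_K^{-1}\norm[0]{e_u^I}_K^2 + h_K\envert[0]{e_u^I}_{1,K}^2 + \text{projection terms}\big)$, and then \cref{eq:PiVintest-u} delivers the $O(h^{2(l-1)})$ bound; alternatively one cites \cite[Lemma 9]{Rhebergen:2020} as was done for $I_3$ earlier.

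Finally the tangential interface term $\norm[0]{(\bar{e}_u^I)^t}_{\Gamma^I}^2$: since $\bar{e}_u^I = \gamma(u) - \bar{\Pi}_V u$, projecting onto the tangential component only decreases the norm, so $\norm[0]{(\bar{e}_u^I)^t}_{\Gamma^I} \le \norm[0]{\gamma(u) - \bar{\Pi}_V u}_{\Gamma^I}$, and a facet-by-facet $L^2$-projection approximation estimate combined with a trace inequality gives $O(h^{l-1/2}\norm{u}_{l,\Omega^s}) = O(h^{l-1})$ after summing (the half power is absorbed since we are bounding by $h^{l-1}$ and $l\ge 2$ makes $h^{l-1/2}\le h^{l-1}$ up to the domain size). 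Collecting the three groups of estimates and taking square roots yields \cref{eq:interpestimate_u}. The main obstacle is bookkeeping in the Darcy interface term — making sure the scaling of $h_K^{-1}$ against the trace inequality and the $\bar{\Pi}_V$ projection error lines up to give exactly $h^{l-1}$ and no worse; everything else is a routine reassembly of the $\Omega^s$ argument from \cite[Lemma 7]{Cesmelioglu:2020} together with the observation that the new $\Omega^d$ jump contributions either vanish by $H(\mathrm{div})$-conformity or are already controlled by the $I_1,I_3$ computations appearing in \cref{lem:infsupbh}.
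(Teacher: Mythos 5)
Your proposal is correct and follows essentially the same route as the paper: the $\Omega^s$ and tangential interface contributions are delegated to \cite[Lemma 7]{Cesmelioglu:2020}, and the new Darcy part $\tnorm{\boldsymbol{e}_u^I}_{v,d}$ is bounded term by term using the BDM estimates \cref{eq:PiVintest} and the argument of \cite[Lemma 9]{Rhebergen:2020} for the interface normal term, exactly as in the paper's (terser) proof. The only detail worth making explicit is that for facets of $\mathcal{F}^d\setminus\mathcal{F}^I$ lying on $\Gamma^d$ the quantity $\jump{e_u^I\cdot n}$ reduces to $e_u^I\cdot n$, whose vanishing uses the boundary condition $u\cdot n=0$ together with \cref{eq:jump_PiV}, not merely normal-continuity of $u$ and $\Pi_V u$.
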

\begin{proof}
  It was shown in the proof of \cite[Lemma 7]{Cesmelioglu:2020} that
  \begin{equation*}
    \label{eq:interpestimate_us}
    \del[0]{\tnorm{\boldsymbol{e}_u^I}_{v',s}^2 + \norm[0]{\bar{e}_u^t}_{\Gamma^I}^2}^{1/2} \le C h^{l-1}\norm{u}_{l,\Omega^s}.
  \end{equation*}
  To complete the proof, we observe by definition, using
  \cref{eq:PiVintest}, and the proof of \cite[Lemma
  9]{Rhebergen:2020}, that
  \begin{equation*}
    \tnorm{\boldsymbol{e}_u^I}_{v,d}^2
    =
    \norm[0]{e_u^I}_{\text{div}}^2
    + \sum_{K \in \mathcal{T}^d} h_K^{-1}\norm[0]{(e_u^I - \bar{e}_u^I)\cdot n}_{\partial K \cap \Gamma^I}^2
    \le ch^{2l-2}\norm[0]{u}_{l,\Omega}^2.
  \end{equation*}
  \qed
\end{proof}

To obtain the error equation, note that because $\Pi_Q$ and
$\bar{\Pi}_Q^j$, $j=s,d$ are the $L^2$-projection operators onto,
respectively $Q_h$ and $\bar{Q}_h^j$,
$\nabla \cdot v_h|_K \in P_{k-1}(K)$ for all $K \in \mathcal{T}$,
$v_h\cdot n^j|_F \in P_k(F)$ for all $F \in \mathcal{F}^j$ and
$\bar{v}_h \cdot n^j|_F \in P_k(F)$ for all $F \in \mathcal{F}^I$,
\begin{equation}
  \label{eq:bhvhepI}
  b_h(\boldsymbol{v}_h, \boldsymbol{e}_p^I)
  = 0 \quad \forall \boldsymbol{v}_h \in \boldsymbol{X}_h.  
\end{equation}
Furthermore, by properties \cref{eq:PiVinterpOp} of the BDM
interpolation operator and the $L^2$-projection $\bar{\Pi}_V$, it
follows that
\begin{equation}
  \label{eq:bheuIqh}
  b_h(\boldsymbol{e}_u^I, \boldsymbol{q}_h)
  = 0 \quad \forall \boldsymbol{q}_h \in \boldsymbol{Q}_h.  
\end{equation}
Let us denote by $a_h^L$ the linear part of $a_h$, i.e., 
\begin{equation*}
  a_h^L(\boldsymbol{u}, \boldsymbol{v})
  := a_h^s(\boldsymbol{u}, \boldsymbol{v})
  + a^d(u, v) + a^I(\bar{u}, \bar{v}).
\end{equation*}
Subtracting the consistency equations (see \cref{lem:consistency})
from \cref{eq:hdg}, using
\cref{eq:bhvhepI,eq:bheuIqh,eq:inter_approx_errors} and rearranging,
we obtain the following error equation:
\begin{equation}
  \label{eq:errorequation}
  \begin{split}
    t_h(u;\boldsymbol{e}_u^h, \boldsymbol{v}_h)
    + a_h^L(\boldsymbol{e}_u^h, \boldsymbol{v}_h)
    =&
    a_h^L(\boldsymbol{e}_u^I, \boldsymbol{v}_h)
     - b_h(\boldsymbol{v}_h, \boldsymbol{e}_p^h)
    - b_h(\boldsymbol{e}_u^h, \boldsymbol{q}_h)
    \\
    &+ t_h(u;\boldsymbol{e}_u^I, \boldsymbol{v}_h)    
    + t_h(u;\boldsymbol{u}_h, \boldsymbol{v}_h) 
    - t_h(u_h;\boldsymbol{u}_h, \boldsymbol{v}_h).    
  \end{split}
\end{equation}

The error estimates for the HDG method for the Navier--Stokes
equations in \cite{Kirk:2019} are now extended here to the coupled
Navier--Stokes and Darcy problem along the same lines as
\cite{Discacciati:2017}.

\begin{theorem}[Energy estimate velocity and pressure error]
  \label{thm:energyestimate}
  Let $(u,p) \in \sbr[0]{H^{k+1}(\Omega)}^{\dim} \times H^k(\Omega)$
  be the solution to the Navier--Stokes/Darcy problem
  \cref{eq:system,eq:interface,eq:bcs_simple} and let
  $\boldsymbol{u} = (u, \gamma(u))$ and
  $\boldsymbol{p} = (p, \gamma(p^s), \gamma(p^d))$. Let
  $(\boldsymbol{u}_h,\boldsymbol{p}_h) \in \boldsymbol{X}_h \times
  \boldsymbol{Q}_h$ be the solution to the discrete
  Navier--Stokes/Darcy problem \cref{eq:hdg}. Let $C_w$, $C_p$, $C_f$,
  $C_{bb}$, $C_{ae}$, $C_{ae}^s$, $C_{si,2}$, and $C_{si,4}$ be the
  constants in \cref{eq:uniquenessrequirement-wf} and let $c_w$,
  $c_p$, $c_f$, $c_{bb}$, $c_{ae}$, $c_{ae}^s$, $c_{si,2}$,
  $c_{si,4}$, and $c_{pq}$ be the constants in
  \cref{eq:data_unique_sol_uh}. Let $\tilde{c}_w = \max(C_w, c_w)$,
  $\tilde{c}_p = \max(C_p, c_p)$, $\tilde{c}_f = \max(C_f, c_f)$,
  $\tilde{c}_{bb} = \min(C_{bb},c_{bb})$,
  $\tilde{c}_{ae} = \min(C_{ae},c_{ae})$,
  $\tilde{c}_{ae}^s = \min(C_{ae}^s,c_{ae}^s)$,
  $\tilde{c}_{sir} = \max(\tfrac{1}{2}C_{si,2}C_{si,4}^2,
  c_{si,2}(c_{pq}^2 + c_{si,4}^2))$, and let $0 <\delta < 1$. If
  \begin{equation}
    \label{eq:databoundfsfd}
    \tilde{c}_p\norm[0]{f^s}_{\Omega^s} + 2\mu \tilde{c}_f\tilde{c}_{bb}^{-1} \norm[0]{f^d}_{\Omega^d}
    < \tfrac{1}{2}\mu^2\tilde{c}_{ae} \min\del[1]{\tilde{c}_{ae}\tilde{c}_w^{-1}, \tilde{c}_{ae}^s\delta \tilde{c}_{sir}^{-1}},
  \end{equation}
  then
  \begin{subequations}
    \begin{align}
      \label{eq:apriori-u}
      \tnorm{\boldsymbol{u}-\boldsymbol{u}_h}_v
      &\le c_1 h^{k}\norm{u}_{k+1,\Omega},
      \\
      \label{eq:apriori-p}
      \tnorm{\boldsymbol{p}-\boldsymbol{p}_h}_p
      &\le c_2 h^k \del[1]{\, \norm[0]{p}_{k,\Omega} + \mu\norm{u}_{k+1,\Omega}},
    \end{align}
  \end{subequations}
  where $c_1,c_2>0$ are constants independent of $\mu$ and $h$.
\end{theorem}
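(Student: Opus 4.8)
The plan is the classical three-part argument for a mixed method with a convective nonlinearity, the central structural point being that the \emph{discrete} velocity error $\boldsymbol{e}_u^h$ lies in the divergence-constrained space $\boldsymbol{Z}_h$, which decouples the velocity estimate from the pressure (this is what makes the method pressure-robust). First I would test the error equation \cref{eq:errorequation} with $\boldsymbol{v}_h = \boldsymbol{e}_u^h$. Since $\boldsymbol{u}_h$ satisfies $b_h(\boldsymbol{u}_h,\boldsymbol{q}_h)=\ell^d(q_h)$ and, by the commuting-diagram properties \cref{eq:PiVinterpOp} of $\Pi_V,\bar{\Pi}_V$ together with $-\nabla\cdot u=\chi^df^d$, also $b_h((\Pi_Vu,\bar{\Pi}_Vu),\boldsymbol{q}_h)=\ell^d(q_h)$, we get $\boldsymbol{e}_u^h\in\boldsymbol{Z}_h$, so both $b_h(\boldsymbol{e}_u^h,\boldsymbol{e}_p^h)$ and $b_h(\boldsymbol{e}_u^h,\boldsymbol{q}_h)$ vanish and the left-hand side reduces to $a_h(u;\boldsymbol{e}_u^h,\boldsymbol{e}_u^h)$. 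To apply \cref{lem:coercivity_awhvhvh} to $w=u|_{\Omega^s}$ I must check its hypotheses: $u$ is classically divergence free on $\Omega^s$ (as $\chi^d\equiv0$ there) and lies in $H(\mathrm{div};\Omega^s)$, and the bound $\norm{u\cdot n}_{\Gamma^I}\le\mu c_{ae}^s\delta(c_{pq}^2+c_{si,4}^2)^{-1}$ follows from a trace inequality $\norm{u\cdot n}_{\Gamma^I}\le C\norm{u}_{\mathcal X}$, the continuous a priori bound \cref{eq:bounduXpQ-wf-a}, and the data smallness \cref{eq:databoundfsfd} (which, via the combined constants $\tilde c_\bullet$, dominates the continuous uniqueness condition \cref{eq:uniquenessrequirement-wf}). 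This gives $a_h(u;\boldsymbol{e}_u^h,\boldsymbol{e}_u^h)\ge c_{ae}\mu\tnorm{\boldsymbol{e}_u^h}_v^2$ on the left.

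The right-hand side of \cref{eq:errorequation} then consists of $a_h^L(\boldsymbol{e}_u^I,\boldsymbol{e}_u^h)$, $t_h(u;\boldsymbol{e}_u^I,\boldsymbol{e}_u^h)$, and $t_h(u;\boldsymbol{u}_h,\boldsymbol{e}_u^h)-t_h(u_h;\boldsymbol{u}_h,\boldsymbol{e}_u^h)$. The first is bounded by the continuity estimates \cref{eq:continuity-ah-sep-terms} and the interpolation estimate \cref{eq:interpestimate_u}, contributing $\lesssim\mu h^k\norm{u}_{k+1,\Omega}\tnorm{\boldsymbol{e}_u^h}_v$. For the second I would use \cref{lem:boundedness_th} with $w_2=0$ (legitimate because $\Pi_Vu$, hence $e_u^I$, is elementwise divergence free and $H(\mathrm{div})$-conforming) to get $c_w\norm{u}_{1,h,\Omega^s}\tnorm{\boldsymbol{e}_u^I}_{v,s}\tnorm{\boldsymbol{e}_u^h}_{v,s}$, with $\norm{u}_{1,h,\Omega^s}=\envert{u}_{1,\Omega^s}\le\norm{u}_{\mathcal X}$ controlled as above. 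The genuinely nonlinear term is handled by \cref{lem:boundedness_th} with $w_1=u$, $w_2=u_h$ (admissible since the strong conservation \cref{eq:massconservations} makes $u_h|_{\Omega^s}$ elementwise divergence free and $H(\mathrm{div};\Omega^s)$-conforming), giving $c_w\norm{u-u_h}_{1,h,\Omega^s}\tnorm{\boldsymbol{u}_h}_{v,s}\tnorm{\boldsymbol{e}_u^h}_{v,s}$; writing $u-u_h=e_u^I-e_u^h$, the $e_u^I$ piece is again $O(h^k)$ while the $e_u^h$ piece yields $\lesssim c_w\tnorm{\boldsymbol{u}_h}_v\tnorm{\boldsymbol{e}_u^h}_v^2$, which by the discrete a priori bound \cref{eq:existunique-hdg-a} and \cref{eq:databoundfsfd} is $\le\tfrac12 c_{ae}\mu\tnorm{\boldsymbol{e}_u^h}_v^2$ and is absorbed on the left. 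Dividing by $\tnorm{\boldsymbol{e}_u^h}_v$ and using once more that \cref{eq:databoundfsfd} forces $\norm{u}_{\mathcal X}\lesssim\mu$ (so the $\mu^{-1}$ in the a priori bounds compensates the absence of $\mu$ in the $t_h$-terms) gives $\tnorm{\boldsymbol{e}_u^h}_v\le c_1'h^k\norm{u}_{k+1,\Omega}$ with $c_1'$ independent of $\mu$ and $h$; then $\boldsymbol{u}-\boldsymbol{u}_h=\boldsymbol{e}_u^I-\boldsymbol{e}_u^h$ and \cref{eq:interpestimate_u} yield \cref{eq:apriori-u}.

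For the pressure I would use the inf-sup condition \cref{lem:infsupbh}. Taking a general $\boldsymbol{v}_h\in\boldsymbol{X}_h$ and $\boldsymbol{q}_h=0$ in \cref{eq:errorequation} and solving for $b_h(\boldsymbol{v}_h,\boldsymbol{e}_p^h)$ expresses it through $a_h^L(\boldsymbol{e}_u^I-\boldsymbol{e}_u^h,\boldsymbol{v}_h)$, $t_h(u;\boldsymbol{e}_u^I-\boldsymbol{e}_u^h,\boldsymbol{v}_h)$ and $t_h(u;\boldsymbol{u}_h,\boldsymbol{v}_h)-t_h(u_h;\boldsymbol{u}_h,\boldsymbol{v}_h)$; each is bounded with \cref{eq:continuity-ah-sep-terms} and \cref{lem:boundedness_th} exactly as above, now using the velocity error bound just obtained, and every term turns out to be $O(\mu h^k\norm{u}_{k+1,\Omega}\tnorm{\boldsymbol{v}_h}_v)$ — the $a_h^L$ contributions carry an explicit $\mu$, the $t_h$ contributions carry $\norm{u}_{1,h,\Omega^s}$ or $\tnorm{\boldsymbol{u}_h}_v$, both $\lesssim\mu$ under \cref{eq:databoundfsfd}. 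Dividing by $\tnorm{\boldsymbol{v}_h}_v$, taking the supremum and invoking \cref{eq:infsupbh} gives $\tnorm{\boldsymbol{e}_p^h}_p\le c_2'\mu h^k\norm{u}_{k+1,\Omega}$; combined with \cref{eq:interpestimate_p} (with $l=k$) and $\boldsymbol{p}-\boldsymbol{p}_h=\boldsymbol{e}_p^I-\boldsymbol{e}_p^h$ this yields \cref{eq:apriori-p}.

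I expect the main obstacle to be the quantitative bookkeeping around the two places where \cref{eq:databoundfsfd} is invoked: verifying that $u$ itself (not only $u_h$) satisfies the hypothesis of \cref{lem:coercivity_awhvhvh}, and absorbing the $\tnorm{\boldsymbol{u}_h}_v$-weighted quadratic term. This is precisely why \cref{eq:databoundfsfd} is phrased with the combined constants $\tilde c_\bullet=\max/\min$ of the continuous and discrete constants: one smallness condition must simultaneously imply the continuous uniqueness bound \cref{eq:uniquenessrequirement-wf}, the discrete one \cref{eq:data_unique_sol_uh}, and the estimates $\norm{u}_{\mathcal X}\lesssim\mu$ and $\tnorm{\boldsymbol{u}_h}_v\lesssim\mu$ that make the final constants $\mu$-independent. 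Everything else is a routine C\'ea-type manipulation together with the interpolation estimates \cref{eq:interpestimate_u} and \cref{eq:interpestimate_p} and the equivalence of $\tnorm{\cdot}_v$ and $\tnorm{\cdot}_{v'}$ on $\boldsymbol{X}_h$.
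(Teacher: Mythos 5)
Your proposal follows essentially the same route as the paper's proof: test the error equation with $\boldsymbol{v}_h=\boldsymbol{e}_u^h$ (the paper additionally takes $\boldsymbol{q}_h=-\boldsymbol{e}_p^h$, but both choices kill the $b_h$ terms since $\boldsymbol{e}_u^h\in\boldsymbol{Z}_h$, which is needed anyway to invoke \cref{lem:coercivity_awhvhvh}), apply coercivity on the left, bound $a_h(u;\boldsymbol{e}_u^I,\boldsymbol{e}_u^h)$ and the nonlinear difference via \cref{lem:boundedness_ahwuv,lem:boundedness_th}, absorb the $c_w\tnorm{\boldsymbol{u}_h}_{v,s}\tnorm{\boldsymbol{e}_u^h}_v^2$ term using \cref{eq:existunique-hdg-a} and \cref{eq:databoundfsfd}, and then recover the pressure through the inf-sup condition \cref{lem:infsupbh}. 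The only (harmless) slip is that your justification for applying \cref{lem:boundedness_th} to $t_h(u;\boldsymbol{e}_u^I,\cdot)$ points at properties of $e_u^I$, whereas the lemma's hypotheses concern the advecting fields $w_1=u$ and $w_2=0$, which are satisfied since $\nabla\cdot u=0$ in $\Omega^s$.
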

\begin{proof}
  We first prove \cref{eq:apriori-u}. Take
  $(\boldsymbol{v}_h, \boldsymbol{q}_h) = (\boldsymbol{e}_u^h,
  -\boldsymbol{e}_p^h)$ in \cref{eq:errorequation}. By coercivity of
  $a_h$ \cref{lem:coercivity_awhvhvh}, we find
  \begin{equation*}
    \begin{split}
      c_{ae} \mu \tnorm{\boldsymbol{e}_u^h}_{v}^2
      \le
      a_h(u; \boldsymbol{e}_u^h, \boldsymbol{e}_u^h)
      =&
      a_h^L(\boldsymbol{e}_u^I, \boldsymbol{e}_u^h)
      + t_h(u;\boldsymbol{e}_u^I, \boldsymbol{e}_u^h)    
      + t_h(u;\boldsymbol{u}_h, \boldsymbol{e}_u^h) 
      - t_h(u_h;\boldsymbol{u}_h, \boldsymbol{e}_u^h)
      \\
      =&
      a_h(u;\boldsymbol{e}_u^I, \boldsymbol{e}_u^h)
      + \sbr[1]{t_h(u;\boldsymbol{u}_h, \boldsymbol{e}_u^h) 
        - t_h(u_h;\boldsymbol{u}_h, \boldsymbol{e}_u^h)}      
      =: I_1 + I_2.
    \end{split}
  \end{equation*}
  We bound each term separately, starting with $I_1$. Since
  \cref{eq:databoundfsfd} holds, it follows by
  \cref{eq:bounduXpQ-wf-a} that
  \begin{equation*}
      \norm{u}_{1,h,\Omega^s}
      \le \tilde{c}_{ae}^{-1}\mu^{-1}\del[1]{ \tilde{c}_p\norm[0]{f^s}_{\Omega^s}
        + 2 \tilde{c}_{f}\mu\tilde{c}_{bb}^{-1}\norm[0]{f^d}_{\Omega^d} }
      \le
      \tfrac{1}{2}\mu \min\del{\tilde{c}_{ae}\tilde{c}_w^{-1}, \tilde{c}_{ae}^s\delta \tilde{c}_{sir}^{-1}}
      \le
      \mu \tilde{c}_{ae}^s\delta \tilde{c}_{sir}^{-1}.
  \end{equation*}
  Therefore, $c_{ac}$ in \cref{lem:boundedness_ahwuv} is bounded by
  \begin{equation*}
    c_{ac}
    =
    2c_e^2\max(c_w\mu^{-1}\norm{u}_{1,h,\Omega^s} + c_{ac}^s, \kappa_{\min}^{-1}, \alpha \kappa_{\min}^{-1/2})
    \le
    2c_e^2\max(c_w\tilde{c}_{ae}^s\delta \tilde{c}_{sir}^{-1} + c_{ac}^s, \kappa_{\min}^{-1}, \alpha \kappa_{\min}^{-1/2})
    \le c_f,
  \end{equation*}
  so that, by \cref{lem:boundedness_ahwuv},
  $I_1 \le \tilde{c}_{f} \mu
  \tnorm{\boldsymbol{e}_u^I}_{v'}\tnorm{\boldsymbol{e}_u^h}_{v}$. To
  bound $I_2$, we use \cref{lem:boundedness_th} and
  \cref{eq:dpoincareineq}:
  \begin{equation*}
    \envert[0]{t_h(u; \boldsymbol{u}_h, \boldsymbol{e}_u^h) - t_h(u_h; \boldsymbol{u}_h, \boldsymbol{e}_u^h)}
    \le c_w \norm[0]{u - u_h}_{1,h,\Omega^s} \tnorm{\boldsymbol{u}_h}_{v,s} \tnorm{\boldsymbol{e}_u^h}_{v,s}
    \le c_w\del[1]{\tnorm{\boldsymbol{e}_u^I}_{v,s} + \tnorm{\boldsymbol{e}_u^h}_{v,s}}\tnorm{\boldsymbol{u}_h}_{v,s} \tnorm{\boldsymbol{e}_u^h}_{v}.
  \end{equation*}
  Combining the bounds for $I_1$ and $I_2$,
  \begin{equation}
    \label{eq:I1I2I3ehu}
      c_{ae} \mu \tnorm{\boldsymbol{e}_u^h}_{v}^2
      \le
      \tilde{c}_{f} \mu \tnorm{\boldsymbol{e}_u^h}_{v} \tnorm{\boldsymbol{e}_u^I}_{v'}
      + c_w\tnorm{\boldsymbol{e}_u^I}_{v,s}\tnorm{\boldsymbol{u}_h}_{v,s} \tnorm{\boldsymbol{e}_u^h}_{v}
      + c_w\tnorm{\boldsymbol{u}_h}_{v,s} \tnorm{\boldsymbol{e}_u^h}_{v}^2.    
  \end{equation}
  By \cref{eq:existunique-hdg-a} and \cref{eq:databoundfsfd},
  \begin{equation}
    \label{eq:cwuhupbound}
    \begin{split}
      c_w\tnorm{\boldsymbol{u}_h}_{v,s}
      \le \tilde{c}_w\tnorm{\boldsymbol{u}_h}_v
      &\le \tilde{c}_{ae}^{-1}\mu^{-1}\tilde{c}_w\del[1]{ \tilde{c}_p\norm[0]{f^s}_{\Omega^s} + 2\tilde{c}_f\mu\tilde{c}_{bb}^{-1} \norm[0]{f^d}_{\Omega^d}}
      \\
      &\le \tfrac{1}{2}\tilde{c}_w
      \mu \min\del{ \tilde{c}_{ae} \tilde{c}_w^{-1}, \tilde{c}_{ae}^s\delta \tilde{c}_{sir}^{-1}}
      \le \tfrac{1}{2}\tilde{c}_{ae}\mu.
    \end{split}
  \end{equation}
  Combining \cref{eq:I1I2I3ehu,eq:cwuhupbound},
  \begin{equation*}
    \label{eq:erreuhv2bound}
      \tfrac{1}{2}\tilde{c}_{ae} \mu \tnorm{\boldsymbol{e}_u^h}_{v}^2
      \le
      (c_{ae}-\tfrac{1}{2}\tilde{c}_{ae}) \mu \tnorm{\boldsymbol{e}_u^h}_{v}^2
      \le
      \tilde{c}_{f} \mu \tnorm{\boldsymbol{e}_u^h}_{v} \tnorm{\boldsymbol{e}_u^I}_{v'}
      + \tfrac{1}{2}\tilde{c}_{ae}\mu\tnorm{\boldsymbol{e}_u^I}_{v,s}\tnorm{\boldsymbol{e}_u^h}_{v},
  \end{equation*}
  resulting in
  \begin{equation}
    \label{eq:approxininterpeu}
    \tnorm{\boldsymbol{e}_u^h}_{v}
    \le      
    2\tilde{c}_{f}\tilde{c}_{ae}^{-1} \tnorm{\boldsymbol{e}_u^I}_{v'}
    + \tnorm{\boldsymbol{e}_u^I}_{v,s}
    \le (1+2\tilde{c}_{f}\tilde{c}_{ae}^{-1})\tnorm{\boldsymbol{e}_u^I}_{v'}.
  \end{equation}
  Applying a triangle inequality to
  $\tnorm{\boldsymbol{u}-\boldsymbol{u}_h}_v$ and using
  \cref{eq:approxininterpeu} results in
  \begin{equation}
    \label{eq:uuheuI}
    \tnorm{\boldsymbol{u}-\boldsymbol{u}_h}_v
    \le 2(1+\tilde{c}_{f}\tilde{c}_{ae}^{-1}) \tnorm{\boldsymbol{e}_u^I}_{v'},
  \end{equation}
  so that \cref{eq:apriori-u} follows by using
  \cref{eq:interpestimate_u}.

  We proceed with proving \cref{eq:apriori-p}. Set
  $\boldsymbol{q}_h = 0$ in \cref{eq:errorequation}. Then, by
  \cref{lem:boundedness_th,lem:boundedness_ahwuv},
  \begin{equation*}
    \begin{split}
      b_h(\boldsymbol{v}_h, \boldsymbol{e}_p^h)      
      =&
      a_h^L(\boldsymbol{e}_u^I, \boldsymbol{v}_h)
      - a_h^L(\boldsymbol{e}_u^h, \boldsymbol{v}_h)
      - t_h(u;\boldsymbol{e}_u^h, \boldsymbol{v}_h)
      \\
      &+ t_h(u;\boldsymbol{e}_u^I, \boldsymbol{v}_h)    
      + t_h(u;\boldsymbol{u}_h, \boldsymbol{v}_h) 
      - t_h(u_h;\boldsymbol{u}_h, \boldsymbol{v}_h)
      \\
      =&
      a_h(u;\boldsymbol{e}_u^I, \boldsymbol{v}_h)
      - a_h(u;\boldsymbol{e}_u^h, \boldsymbol{v}_h)
      + t_h(u;\boldsymbol{u}_h, \boldsymbol{v}_h) 
      - t_h(u_h;\boldsymbol{u}_h, \boldsymbol{v}_h)
      \\
      \le & \tilde{c}_{f} \mu \del[0]{\tnorm{\boldsymbol{e}_u^I}_{v'} + \tnorm{\boldsymbol{e}_u^h}_{v}}\tnorm{\boldsymbol{v}_h}_{v}
      + c_w \norm[0]{u-u_h}_{1,h,\Omega^s}\tnorm{\boldsymbol{u}_h}_{v}\tnorm{\boldsymbol{v}_h}_{v}.
    \end{split}
  \end{equation*}
  By \cref{eq:cwuhupbound,eq:dpoincareineq},
  \begin{equation*}
    \begin{split}
      b_h(\boldsymbol{v}_h, \boldsymbol{e}_p^h)      
      \le & \tilde{c}_f \mu \del[0]{\tnorm{\boldsymbol{e}_u^I}_{v'} + \tnorm{\boldsymbol{e}_u^h}_{v}}\tnorm{\boldsymbol{v}_h}_{v}
      + \tfrac{1}{2} \tilde{c}_{ae}\mu \norm[0]{u-u_h}_{1,h,\Omega^s}\tnorm{\boldsymbol{v}_h}_{v}
      \\
      \le & \tilde{c}_f \mu \del[0]{\tnorm{\boldsymbol{e}_u^I}_{v'} + \tnorm{\boldsymbol{e}_u^h}_{v}}\tnorm{\boldsymbol{v}_h}_{v}
      + \tfrac{1}{2} \tilde{c}_{ae}\mu \del[0]{\tnorm{\boldsymbol{e}_u^I}_{v,s} + \tnorm{\boldsymbol{e}_u^h}_{v,s}} \tnorm{\boldsymbol{v}_h}_{v}
      \\
      \le & \del[0]{\tilde{c}_f + \tfrac{1}{2} \tilde{c}_{ae}}\mu \del[0]{\tnorm{\boldsymbol{e}_u^I}_{v'} + \tnorm{\boldsymbol{e}_u^h}_{v}}\tnorm{\boldsymbol{v}_h}_{v}.
    \end{split}
  \end{equation*}
  By the inf-sup condition in \cref{lem:infsupbh},
  \begin{equation}
    \label{eq:ephpbound}
     \tnorm{\boldsymbol{e}_p^h}_p \le c_{bb}^{-1}
    \sup_{\substack{\boldsymbol{v}_h \in \boldsymbol{X}_h \\ \boldsymbol{v}_h \ne 0}}
    \frac{b_h(\boldsymbol{v}_h, \boldsymbol{e}_p^h)}{\tnorm{\boldsymbol{v}_h}_{v}}
    \le \del[0]{\tilde{c}_f + \tfrac{1}{2} \tilde{c}_{ae}}c_{bb}^{-1}\mu \del[0]{\tnorm{\boldsymbol{e}_u^I}_{v'} + \tnorm{\boldsymbol{e}_u^h}_{v}}.
  \end{equation}
  Applying the triangle inequality to
  $\tnorm{\boldsymbol{p}-\boldsymbol{p}_h}_p$ and combining
  \cref{eq:ephpbound} with \cref{eq:approxininterpeu} we find 
  \begin{equation}
    \label{eq:pressureapproxerrorterm}
    \tnorm{\boldsymbol{p}-\boldsymbol{p}_h}_p
    \le \tnorm{\boldsymbol{e}_p^I}_p + \del[0]{2\tilde{c}_f + \tilde{c}_{ae}}c_{bb}^{-1}
    \del[0]{1 + \tilde{c}_f\tilde{c}_{ae}^{-1}} \mu \tnorm{\boldsymbol{e}_u^I}_{v'},
  \end{equation}
  so that \cref{eq:apriori-p} follows using
  \cref{eq:interpestimate_p,eq:interpestimate_u}. \qed
\end{proof}

\begin{remark}
  \label{rem:parameters}
  The velocity error bound \cref{eq:apriori-u} is independent of the
  pressure and independent of the inverse of the viscosity; the
  discretization is pressure-robust. However, note that
  $\tilde{c}_f\tilde{c}_{ae}^{-1} =
  \mathcal{O}(\kappa_{\max}/\kappa_{\min})$ for small $\kappa_{\min}$
  and large $\kappa_{\max}$. Therefore, for small $\kappa_{\min}$ and
  large $\kappa_{\max}$, by \cref{eq:uuheuI}, the constant in the
  velocity approximation error \cref{eq:apriori-u} increases linearly
  with $\kappa_{\max}/\kappa_{\min}$. The dependence of the pressure
  error on $\kappa_{\max}/\kappa_{\min}$ is small for small enough
  $\mu$, see \cref{eq:pressureapproxerrorterm}.
\end{remark}

\section{Numerical examples}
\label{sec:numerical_examples}

We now present numerical examples in which solutions to the
Navier--Stokes/Darcy problem \cref{eq:system,eq:interface} are
approximated by solutions to the HDG discretization
\cref{eq:hdg}. The HDG method is implemented using the finite
element software Netgen/NGSolve \cite{Schoberl:1997,Schoberl:2014}.

\subsection{Example 1: Manufactured Solution}
\label{ss:ratesconvergence}

We consider here a manufactured solution on the domain
$\overline{\Omega} = [0, 1] \times [-1, 1]$ such that
$\overline{\Omega}^s = [0, 1] \times [0, 1]$ and
$\overline{\Omega}^d = [0,1] \times [-1, 0]$. We consider two values
for $\kappa$, namely, $\kappa = \kappa_1(x_1)\mathbb{I}$ with
$\kappa_1 = \alpha^2(\pi x_1 + 1)^2/4$ and
$\kappa = \kappa_2(x)\mathbb{I}$ with
$\kappa_2 = \kappa_1(x_1)\exp(-15 \sin^2(10x_2) )$. We furthermore consider
the manufactured solution
\begin{subequations}
  \begin{align*}
    u^s
    &=
      \begin{bmatrix}
        \pi x_1\cos(\pi x_1x_2) + 1
        \\
        -\pi x_2\cos(\pi x_1x_2) + 2x_1
      \end{bmatrix},
    &&
    \\
    p^s &= \mu(1-\pi)\cos(\pi x_1 x_2) + \sin(\tfrac{1}{2}\pi x_2)/\mu,
    &
    p^d &=-\frac{8\mu x_1 x_2}{(\pi x_1 + 1)^2\alpha^2} + \mu\cos(\pi x_1x_2).
  \end{align*}
\end{subequations}
This manufactured solution is used to set
$u^d=-\kappa\mu^{-1}\nabla p^d$, the source terms, $f^s$ and $f^d$,
and inhomogeneous boundary conditions. In our simulations we set
$\alpha = 1$ and consider $\mu=10^{-1}$ and $\mu=10^{-3}$. In our
discrete function spaces we consider $k=1$ (corresponding to the
lowest order polynomial approximation in which the cell pressure is
approximated by piecewise constants and all other unknowns by
piecewise linear polynomial approximations), and higher-order accurate
approximations with $k=2$, and $k=3$. We choose the penalty parameter
in \cref{eq:ah_s} as $\beta = 8k^2$.

Define
$\norm{v}_E^2 := \del[1]{\sum_{K \in \mathcal{T}^s} |v|_{1,K}^2 +
  \norm[0]{v}_{\Omega^d}^2}$. Using $\kappa_1$ we observe in
\cref{fig:errorsuEpL2k123EuL2p} that the velocity in the
$\norm{\cdot}_E$-norm and pressure both converge at rate $k$, as
expected from \cref{thm:energyestimate}. When $k=2$ and $k=3$ the
errors in the velocity are significantly larger using $\kappa_2$
compared to $\kappa_1$. This is again as expected from
\cref{thm:energyestimate} since for $\kappa_2$ we have
$\kappa_{\max}/\kappa_{\min} \approx 5.6 \cdot 10^7$ which is
significantly larger than
$\kappa_{\max}/\kappa_{\min} \approx 1.7 \cdot 10^1$ when using
$\kappa_1$ (see also \cref{rem:parameters}). Interestingly, the
velocity and pressure errors when $k=1$ do not seem to depend on
$\kappa_{\max}/\kappa_{\min}$. Note furthermore that when reducing the
viscosity by a factor of 100 from $\mu=10^{-1}$ to $\mu=10^{-3}$, the
error in the pressure increases approximately by a factor of 100, but
the error in the velocity is unaffected by changing viscosity. This is
also as expected from \cref{thm:energyestimate}, i.e., our
discretization is pressure-robust.

In \cref{rem:parameters} we pointed out that for small enough
viscosity an increase in $\kappa_{\max}/\kappa_{\min}$ only has a
small effect on the pressure error. In the right column of plots in
\cref{fig:errorsuEpL2k123EuL2p} we indeed observe that the effect of
$\kappa_{\max}/\kappa_{\min}$ is negligible for $\mu=10^{-3}$, but
less so for $\mu=10^{-1}$ in the pre-asymptotic regime.

Finally, in \cref{fig:errorsuL2} we plot the velocity error in the
$L^2$-norm. We observe optimal $k+1$ rates of convergence when $k=2$
and $k=3$. For $k=1$, $\mu=10^{-3}$, and small
$\kappa_{\max}/\kappa_{\min}$ ratio, we observe a rate of convergence
between 1.6 and 1.9. The velocity error magnitude in the $L^2$-norm is
independent of the viscosity, but clearly increases with increasing
$\kappa_{\max}/\kappa_{\min}$ ratio.

\begin{figure}
  \centering
  \subfloat[Velocity error $\norm{u-u_h}_E$, $k=1$. \label{fig:uuhEk1}]{\includegraphics[width=0.48\textwidth]{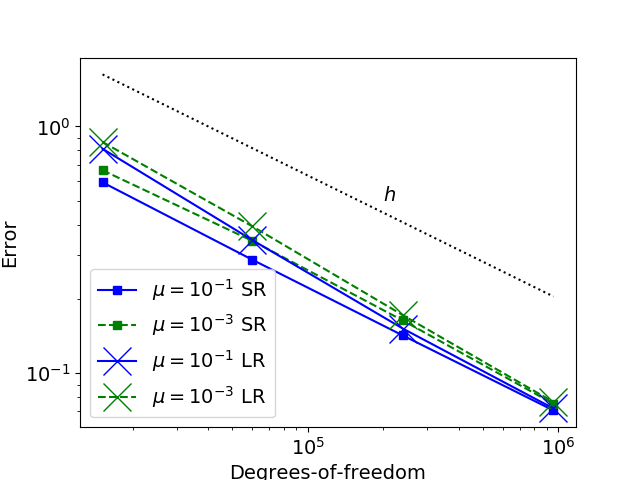}}
  \quad
  \subfloat[Pressure error $\norm{p-p_h}_{\Omega}$, $k=1$. \label{fig:pphl2k1}]{\includegraphics[width=0.48\textwidth]{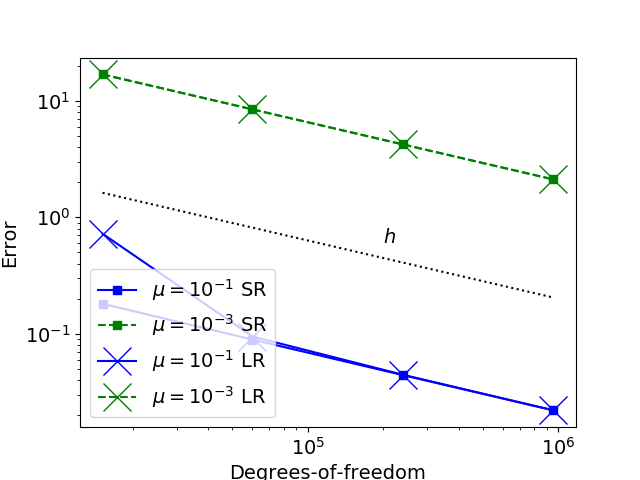}}
  \\
  \subfloat[Velocity error $\norm{u-u_h}_E$, $k=2$. \label{fig:uuhEk2}]{\includegraphics[width=0.48\textwidth]{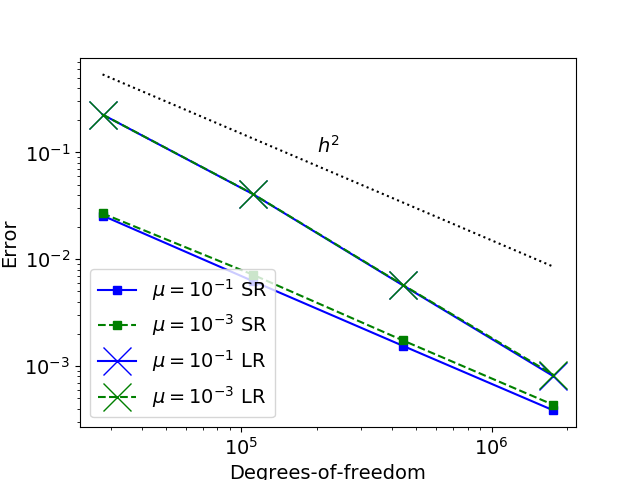}}
  \quad
  \subfloat[Pressure error $\norm{p-p_h}_{\Omega}$, $k=2$. \label{fig:pphl2k2}]{\includegraphics[width=0.48\textwidth]{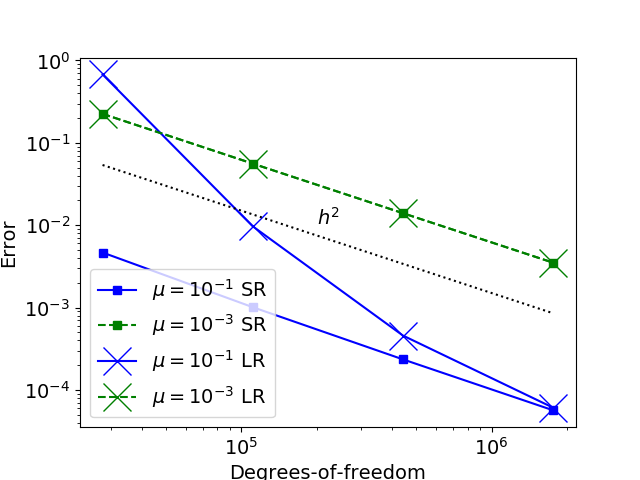}}
  \\
  \subfloat[Velocity error $\norm{u-u_h}_E$, $k=3$. \label{fig:uuhEk3}]{\includegraphics[width=0.48\textwidth]{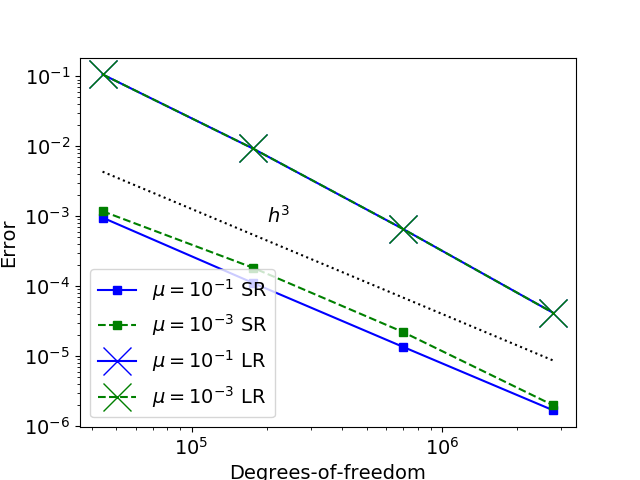}}
  \quad
  \subfloat[Pressure error $\norm{p-p_h}_{\Omega}$, $k=3$. \label{fig:pphl2k3}]{\includegraphics[width=0.48\textwidth]{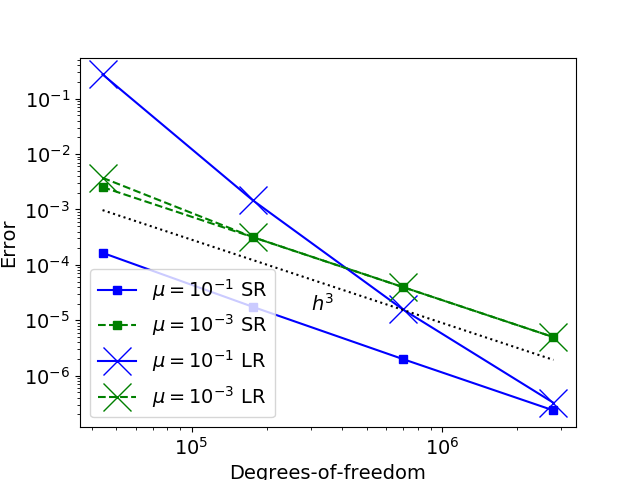}}
  \caption{The velocity error in the $\norm{\cdot}_E$- and the
    pressure error in the $L^2$-norm for the test case of
    \cref{ss:ratesconvergence}. Here the blue lines correspond to
    $\mu = 10^{-1}$ and the green dashed lines correspond to
    $\mu = 10^{-3}$. The square symbols correspond to
    $\kappa = \kappa_1\mathbb{I}$ with a small
    $\kappa_{\max}/\kappa_{\min}$ ratio (SR) while the $\times$ symbols
    correspond to $\kappa = \kappa_2\mathbb{I}$ with a large
    $\kappa_{\max}/\kappa_{\min}$ ratio (LR).}
  \label{fig:errorsuEpL2k123EuL2p}
\end{figure}

\begin{figure}
  \centering
  \subfloat[Velocity error $\norm{u-u_h}_{\Omega}$, $k=1$. \label{fig:uuhl2k1o}]{\includegraphics[width=0.48\textwidth]{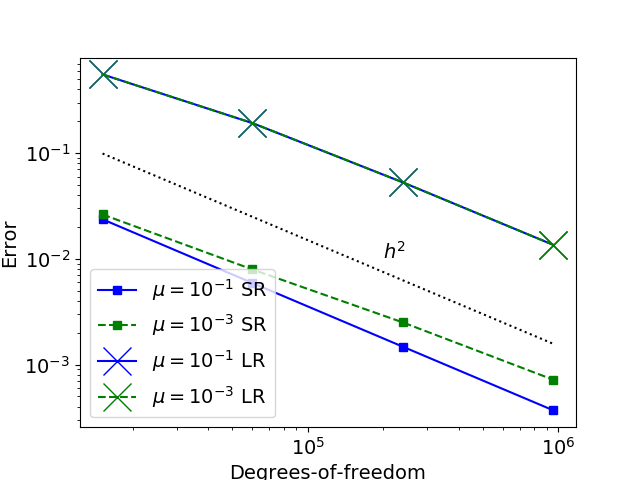}}
  \quad
  \subfloat[Velocity error $\norm{u-u_h}_{\Omega}$, $k=2$. \label{fig:uuhl2k2o}]{\includegraphics[width=0.48\textwidth]{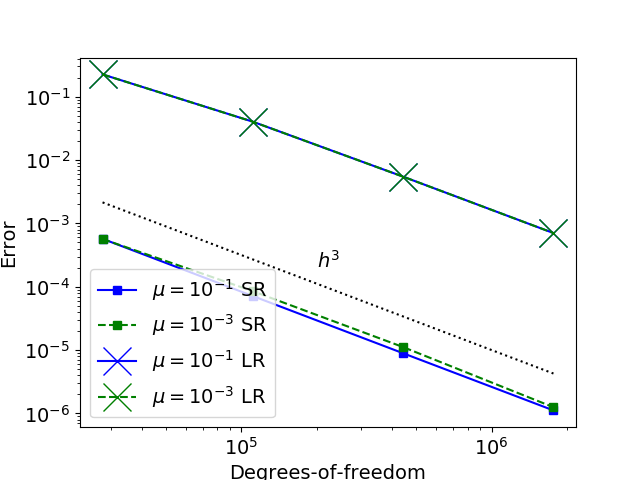}}
  \\
  \subfloat[Velocity error $\norm{u-u_h}_{\Omega}$, $k=3$. \label{fig:uuhl2k3o}]{\includegraphics[width=0.48\textwidth]{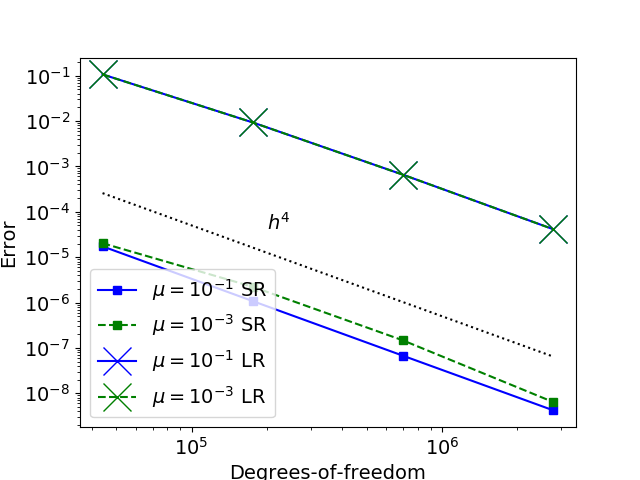}}
  \caption{The velocity error in the $L^2$-norm for the test case of
    \cref{ss:ratesconvergence}. Here the blue lines correspond to
    $\mu = 10^{-1}$ and the green dashed lines correspond to
    $\mu = 10^{-3}$. The square symbols correspond to
    $\kappa = \kappa_1\mathbb{I}$ with a small
    $\kappa_{\max}/\kappa_{\min}$ ratio (SR) while the $\times$
    symbols correspond to $\kappa = \kappa_2\mathbb{I}$ with a large
    $\kappa_{\max}/\kappa_{\min}$ ratio (LR).}
  \label{fig:errorsuL2}
\end{figure}

\subsection{Example 2: Coupled surface/subsurface flow with randomly generated permeability field}
\label{ss:surf_subsurf_r}

We consider now a Navier--Stokes/Darcy problem similar to a problem
proposed in \cite[Section 8.2]{Girault:2009}. We consider the domain
$\overline{\Omega} = [0, 1] \times [0, 1]$ such that
$\overline{\Omega}^s = [0, 1] \times [0.6, 1]$ and
$\overline{\Omega}^d = [0,1] \times [0, 0.6]$ and impose the following
boundary conditions:
\begin{align*}
  u &= (\sin( (\pi/8)(10 x_2-6))(1-x_1/5), 0) && \text{on } \Gamma^s,
  \\
  u \cdot n &= 0 && \text{on } \cbr[0]{x \in \Gamma^d:\, x_1 = 0 \text{ or } x_1 = 1},
  \\
  p &= 2-x_1 && \text{on } \cbr[0]{x \in \Gamma^d:\, x_2 = 0}.
\end{align*}
We take $\alpha = 1$, $f^s=0$, $f^d=0$, consider the solution for
$\mu=1$ and $\mu=10^{-2}$, and set the permeability on each element of
the mesh in $\Omega^d$ to a constant such that
$\mu^{-1}\kappa = 10^{-r}$ with $r$ a random number in the interval
$[2, 6]$ (see \cref{fig:permeability}). We furthermore set $k=2$,
$\beta=8k^2$, and compute our solution on a mesh consisting of 92,672
triangles (corresponding to a total of 2,143,476
degrees-of-freedom).

\begin{figure}
  \centering
  \includegraphics[width=0.48\textwidth]{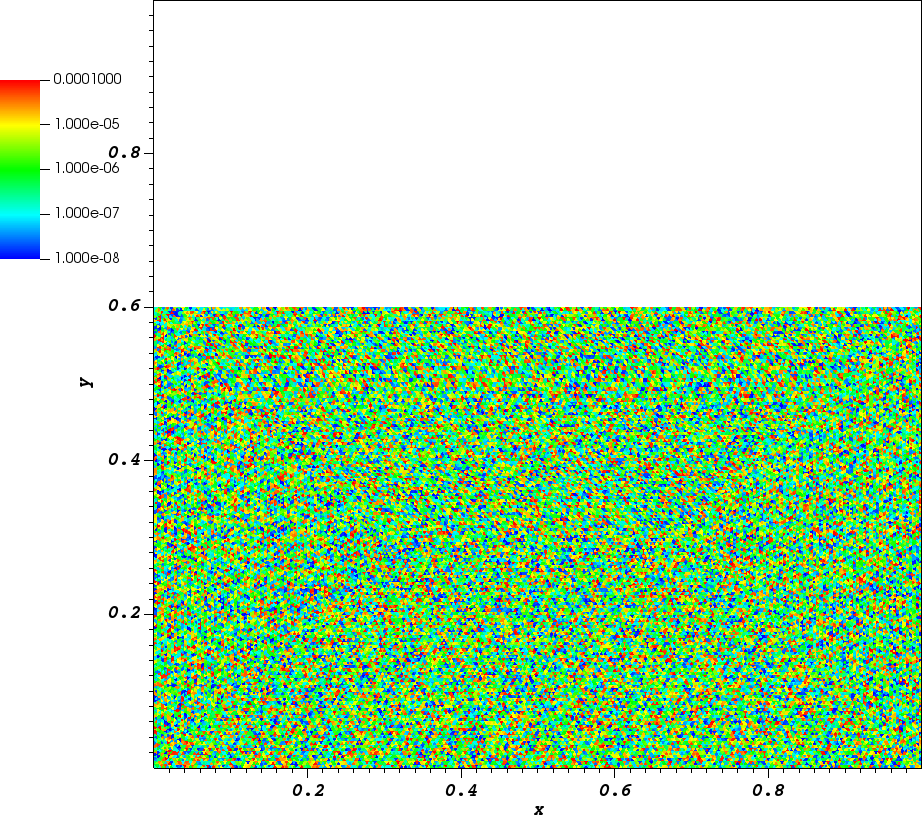}
  \caption{The random permeability when $\mu = 0.01$ for the test case
    described in \cref{ss:surf_subsurf_r}.}
  \label{fig:permeability}
\end{figure}

In \cref{fig:surf_subsurf_r} we plot the magnitude and streamlines of
the velocity and pressure fields computed using $\mu=1$ and
$\mu=0.01$. We observe, for both values of viscosity, that away from
the interface $\Gamma^I$ the fluid flows freely in $\Omega^s$. Fluid
in $\Omega^s$ close to the interface percolates through into the
subsurface region $\Omega^d$. The flow patterns observed in
\cref{fig:surf_subsurf_r} are similar to those observed in
\cite[Section 8.3]{Girault:2009}. Let us finally remark that for
$\mu=10^{-2}$, $\kappa \in [10^{-8},10^{-4}]$. Like the DG method
proposed in \cite{Girault:2009}, our HDG method is able to handle
highly discontinuous permeability.

\begin{figure}
  \centering
  \subfloat[Velocity ($\mu=1$). \label{fig:tc_velr_mu1}]{\includegraphics[width=0.48\textwidth]{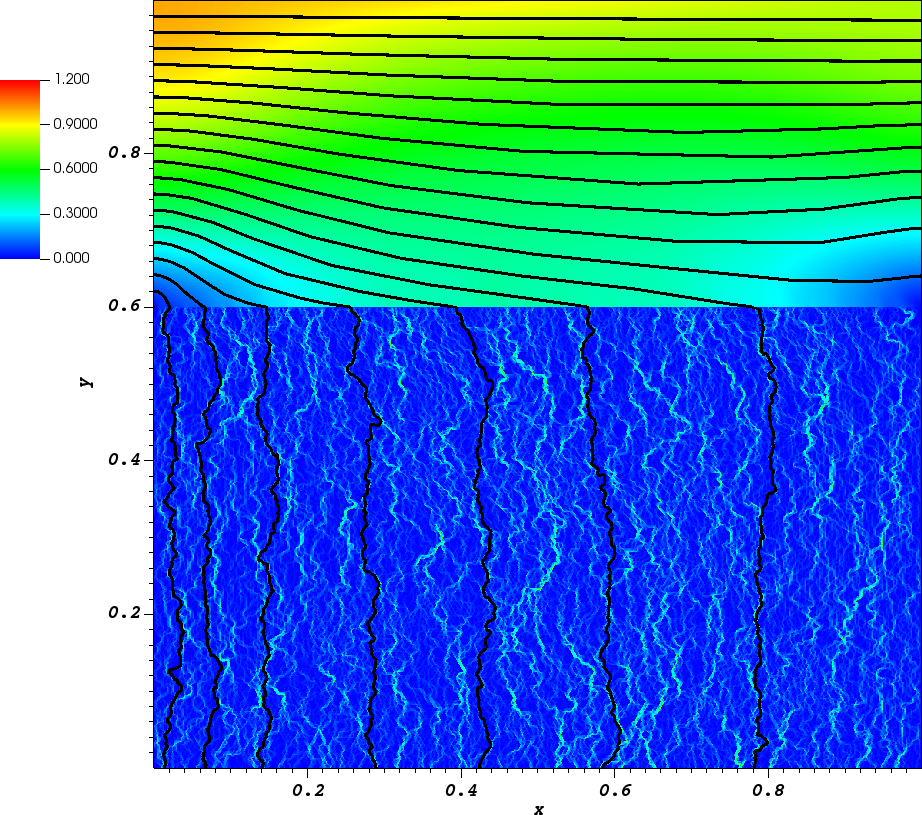}}
  \quad
  \subfloat[Velocity ($\mu=0.01$). \label{fig:tc_velr_mu001}]{\includegraphics[width=0.48\textwidth]{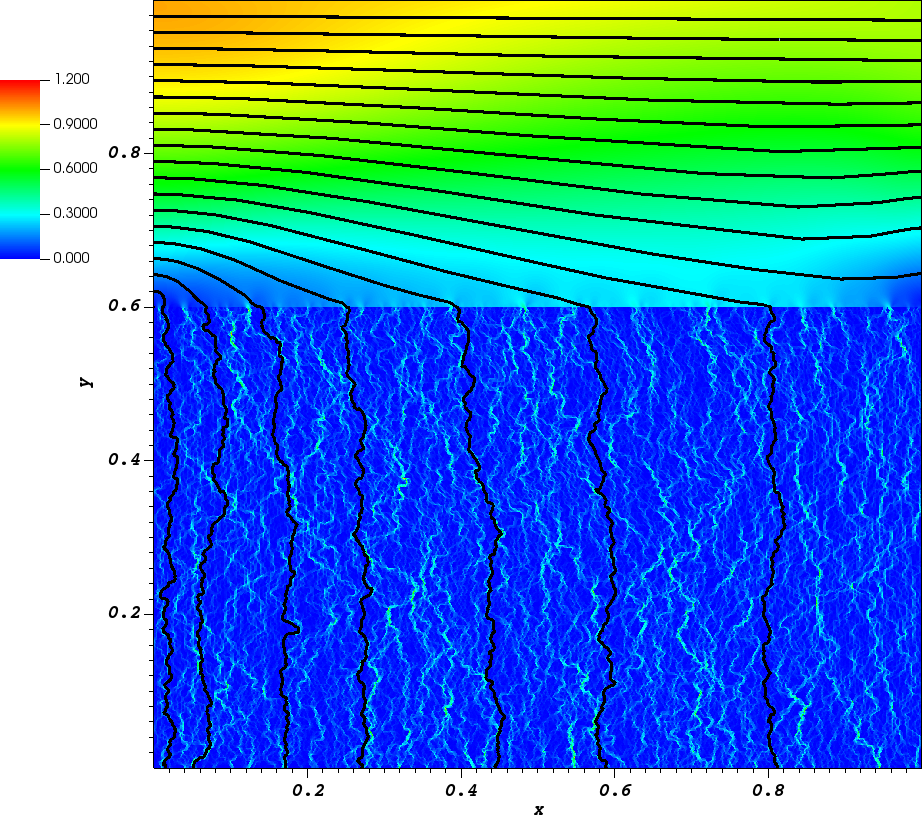}}
  \\
  \subfloat[Pressure ($\mu=1$). \label{fig:tc_presr_mu1}]{\includegraphics[width=0.48\textwidth]{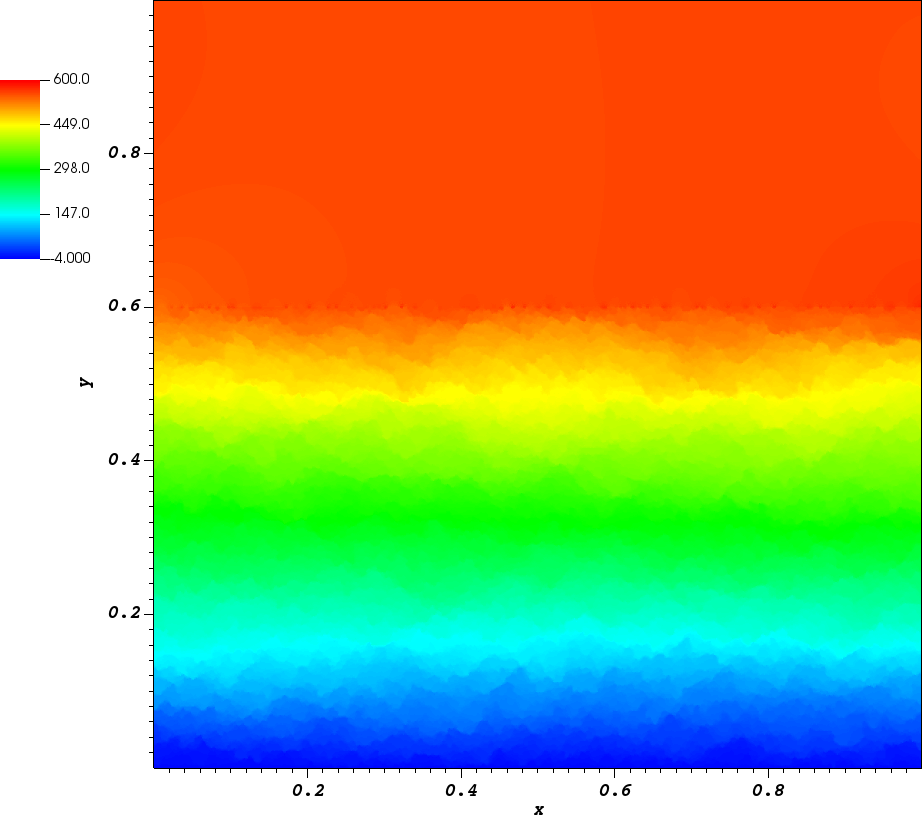}}
  \quad
  \subfloat[Pressure ($\mu=0.01$). \label{fig:tc_presr_mu001}]{\includegraphics[width=0.48\textwidth]{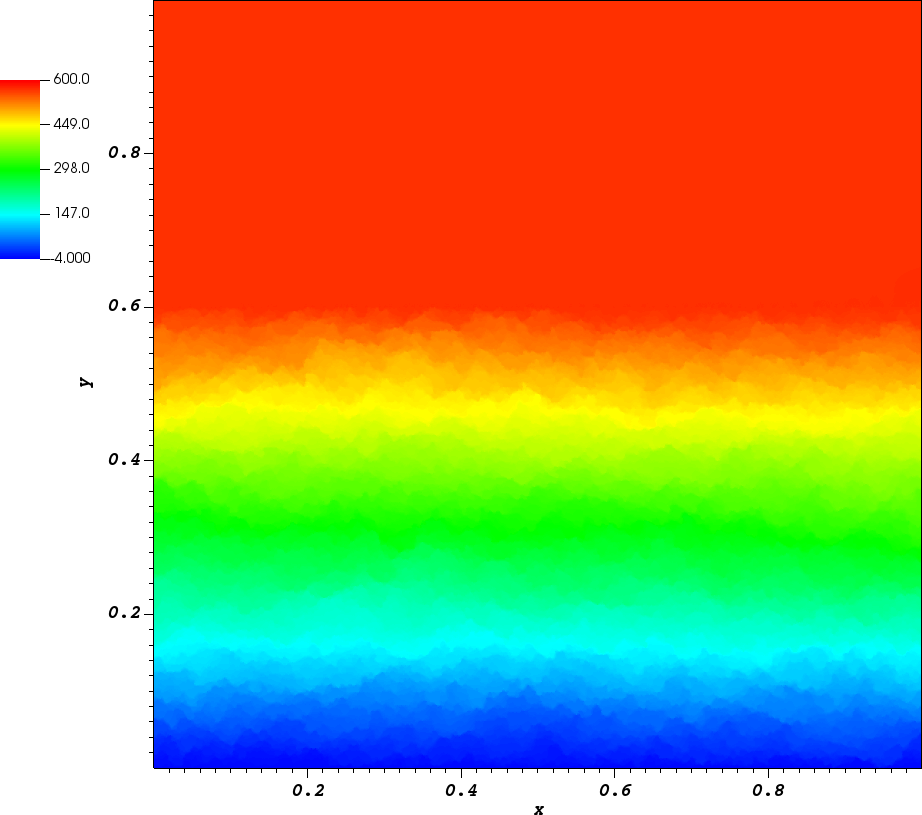}}
  \caption{The magnitude and streamlines of the velocity (top row) and
    pressure magnitude (bottom row) when $\mu=1$ and $\mu=0.01$ for
    the test case described in~\cref{ss:surf_subsurf_r}. We remark
    that the streamlines should be followed from left to right.}
  \label{fig:surf_subsurf_r}
\end{figure}

\section{Conclusions}
\label{ss:conclusion}

We have introduced and analyzed a strongly conservative HDG method for
the Navier--Stokes equations coupled to the Darcy equations by the
Beavers--Joseph--Saffman interface condition. The discretization
results in a velocity field that is globally divergence-conforming and
pointwise divergence-free in the Navier--Stokes region. This allows
for a locally momentum conserving discretization of the Navier--Stokes
equation. (If the divergence-free constraint is satisfied only weakly,
local momentum conservation needs to be sacrificed for the
discretization to be stable \cite{Cockburn:2004b}.) A further property
of the discretization is that the mass equation in the Darcy region is
satisfied pointwise if the source/sink term lies in the discrete
pressure space.

Optimal rates of convergence were proven for the velocity and
pressure. Additionally, the velocity error is independent of the
pressure and viscosity, i.e., the coupled discretization is
pressure-robust.

\subsubsection*{Acknowledgements}

SR gratefully acknowledges support from the Natural Sciences and
Engineering Research Council of Canada through the Discovery Grant
program (RGPIN-05606-2015).

\bibliographystyle{abbrvnat}
\bibliography{references}

\begin{thebibliography}{41}
\providecommand{\natexlab}[1]{#1}
\providecommand{\url}[1]{\texttt{#1}}
\expandafter\ifx\csname urlstyle\endcsname\relax
  \providecommand{\doi}[1]{doi: #1}\else
  \providecommand{\doi}{doi: \begingroup \urlstyle{rm}\Url}\fi

\bibitem[Adams and Fournier(2003)]{Adams:book}
R.~A. Adams and J.~F. Fournier.
\newblock \emph{Sobolev Spaces}.
\newblock Pure And Applied Mathematics Series. Academic Press, Amsterdam,
  Boston, etc., second edition, 2003.

\bibitem[Badea et~al.(2010)Badea, Discacciati, and Quarteroni]{Badea:2010}
L.~Badea, M.~Discacciati, and A.~Quarteroni.
\newblock Numerical analysis of the {N}avier--{S}tokes/{D}arcy coupling.
\newblock \emph{Numer. Math.}, 115\penalty0 (2):\penalty0 195--227, 2010.
\newblock \doi{10.1007/s00211-009-0279-6}.

\bibitem[Beavers and Joseph(1967)]{Beavers:1967}
G.~S. Beavers and D.~D. Joseph.
\newblock Boundary conditions at a naturally impermeable wall.
\newblock \emph{J. Fluid. Mech}, 30\penalty0 (1):\penalty0 197--207, 1967.
\newblock \doi{10.1017/S0022112067001375}.

\bibitem[Brenner and Scott(2010)]{brenner:book}
S.~C. Brenner and L.~R. Scott.
\newblock \emph{The Mathematical Theory of Finite Element Methods}.
\newblock Springer, 3rd edition, 2010.

\bibitem[Caucao et~al.(2017)Caucao, Gatica, Oyarz{\'u}a, and
  {\v{S}}ebestov{\'a}]{Caucao:2017}
S.~Caucao, G.~N. Gatica, R.~Oyarz{\'u}a, and I.~{\v{S}}ebestov{\'a}.
\newblock A fully-mixed finite element method for the navier--stokes/darcy
  coupled problem with nonlinear viscosity.
\newblock \emph{Journal of Numerical Mathematics}, 25\penalty0 (2):\penalty0
  55--88, 2017.
\newblock \doi{10.1515/jnma-2015-0121}.

\bibitem[Caucao et~al.(2020)Caucao, Discacciati, Gatica, and
  Oyarz{\'u}a]{Caucao:2020}
S.~Caucao, M.~Discacciati, G.~N. Gatica, and R.~Oyarz{\'u}a.
\newblock A conforming mixed finite element method for the
  {N}avier--{S}tokes/{D}arcy--{F}orchheimer coupled problem.
\newblock \emph{ESAIM: Mathematical Modelling and Numerical Analysis},
  54\penalty0 (5):\penalty0 1689--1723, 2020.
\newblock \doi{10.1051/m2an/2020009}.

\bibitem[{\c{C}}e{\c{s}}melio{\u{g}}lu and Rivi{\`e}re(2008)]{Cesmelioglu:2008}
A.~{\c{C}}e{\c{s}}melio{\u{g}}lu and B.~Rivi{\`e}re.
\newblock Analysis of time-dependent {N}avier--{S}tokes flow coupled with
  {D}arcy flow.
\newblock \emph{J. Numer. Math.}, 16\penalty0 (4):\penalty0 249--280, 2008.
\newblock \doi{10.1515/JNUM.2008.012}.

\bibitem[{\c{C}}e{\c{s}}melio{\u{g}}lu and Rivi{\`e}re(2009)]{Cesmelioglu:2009}
A.~{\c{C}}e{\c{s}}melio{\u{g}}lu and B.~Rivi{\`e}re.
\newblock Primal discontinuous {G}alerkin methods for time-dependent coupled
  surface and subsurface flow.
\newblock \emph{J. Sci. Comput.}, 40\penalty0 (1):\penalty0 115--140, 2009.
\newblock \doi{10.1007/s10915-009-9274-4}.

\bibitem[Cesmelioglu et~al.(2017)Cesmelioglu, Cockburn, and
  Qiu]{Cesmelioglu:2017}
A.~Cesmelioglu, B.~Cockburn, and W.~Qiu.
\newblock Analysis of a hybridizable discontinuous {G}alerkin method for the
  steady-state incompressible {N}avier--{S}tokes equations.
\newblock \emph{Math. Comp.}, 86:\penalty0 1643--1670, 2017.
\newblock \doi{10.1090/mcom/3195}.

\bibitem[Cesmelioglu et~al.(2020)Cesmelioglu, Rhebergen, and
  Wells]{Cesmelioglu:2020}
A.~Cesmelioglu, S.~Rhebergen, and G.~N. Wells.
\newblock An embedded--hybridized discontinuous {G}alerkin method for the
  coupled {S}tokes--{D}arcy system.
\newblock \emph{Journal of Computational and Applied Mathematics},
  367:\penalty0 112476, 2020.
\newblock \doi{10.1016/j.cam.2019.112476}.

\bibitem[Chaabane et~al.(2017)Chaabane, Girault, Puelz, and
  Riviere]{Chaabane:2017}
N.~Chaabane, V.~Girault, C.~Puelz, and B.~Riviere.
\newblock Convergence of {IPDG} for coupled time-dependent {N}avier--{S}tokes
  and {D}arcy equations.
\newblock \emph{J. Comput. Appl. Math.}, 324:\penalty0 25--48, 2017.
\newblock \doi{10.1016/j.cam.2017.04.002}.

\bibitem[Chidyagwai and Rivi\`{e}re(2009)]{Chidyagwai:2009}
P.~Chidyagwai and B.~Rivi\`{e}re.
\newblock On the solution of the coupled {N}avier-{S}tokes and {D}arcy
  equations.
\newblock \emph{Comput. Methods Appl. Mech. and Eng.}, 198\penalty0
  (47):\penalty0 3806--3820, 2009.
\newblock \doi{https://doi.org/10.1016/j.cma.2009.08.012}.

\bibitem[Chidyagwai and Rivi\`{e}re(2010)]{Chidyagwai:2010}
P.~Chidyagwai and B.~Rivi\`{e}re.
\newblock Numerical modelling of coupled surface and subsurface flow systems.
\newblock \emph{Adv. Water Resour.}, 33\penalty0 (1):\penalty0 92--105, 2010.
\newblock \doi{10.1016/j.advwatres.2009.10.012}.

\bibitem[Cockburn and Sayas(2014)]{Cockburn:2014b}
B.~Cockburn and F.~J. Sayas.
\newblock Divergence-conforming {HDG} methods for {S}tokes flows.
\newblock \emph{Math. Comp.}, 83:\penalty0 1571--1598, 2014.
\newblock \doi{10.1090/S0025-5718-2014-02802-0}.

\bibitem[Cockburn et~al.(2004)Cockburn, Kanschat, and
  Sch\"otzau]{Cockburn:2004b}
B.~Cockburn, G.~Kanschat, and D.~Sch\"otzau.
\newblock A locally conservative {LDG} method for the incompressible
  {N}avier--{S}tokes equations.
\newblock \emph{Math. Comp.}, 74\penalty0 (251):\penalty0 1067--1095, 2004.
\newblock \doi{10.1090/S0025-5718-04-01718-1}.

\bibitem[Cockburn et~al.(2007)Cockburn, Kanschat, and
  Sch{\"o}tzau]{Cockburn:2007a}
B.~Cockburn, G.~Kanschat, and D.~Sch{\"o}tzau.
\newblock A note on discontinuous {Galerkin} divergence-free solutions of the
  {Navier}--{Stokes} equations.
\newblock \emph{J. Sci. Comput.}, 31\penalty0 (1-2):\penalty0 61--73, 2007.
\newblock \doi{10.1007/s10915-006-9107-7}.

\bibitem[Cockburn et~al.(2009)Cockburn, Gopalakrishnan, and
  Lazarov]{Cockburn:2009a}
B.~Cockburn, J.~Gopalakrishnan, and R.~Lazarov.
\newblock Unified hybridization of discontinuous {G}alerkin, mixed, and
  continuous {G}alerkin methods for second order elliptic problems.
\newblock \emph{SIAM J. Numer. Anal.}, 47\penalty0 (2):\penalty0 1319--1365,
  2009.
\newblock \doi{10.1137/070706616}.

\bibitem[Di~Pietro and Ern(2012)]{Pietro:book}
D.~A. Di~Pietro and A.~Ern.
\newblock \emph{Mathematical aspects of discontinuous {G}alerkin methods},
  volume~69 of \emph{Math\'ematiques et Applications}.
\newblock Springer--Verlag Berlin Heidelberg, 2012.

\bibitem[Discacciati and Oyarz\'ua(2017)]{Discacciati:2017}
M.~Discacciati and R.~Oyarz\'ua.
\newblock A conforming mixed finite element method for the
  {N}avier--{S}tokes/{D}arcy coupled problem.
\newblock \emph{Numer. Math.}, 135:\penalty0 571--606, 2017.
\newblock \doi{10.1007/s00211-016-0811-4}.

\bibitem[Discacciati and Quarteroni(2009)]{Discacciati:2009}
M.~Discacciati and A.~Quarteroni.
\newblock {N}avier--{S}tokes/{D}arcy coupling: modeling, analysis, and
  numerical approximation.
\newblock \emph{Rev. Mat. Complut.}, 22\penalty0 (2):\penalty0 315--426, 2009.
\newblock \doi{10.5209/rev_REMA.2009.v22.n2.16263}.

\bibitem[Du and Sayas(2019)]{Du:book}
S.~Du and F.-J. Sayas.
\newblock \emph{An Invitiation to the Theory of the Hybridizable Discontinuous
  Galerkin Method}.
\newblock Springer Briefs in Mathematics. Springer, 2019.

\bibitem[Fu(2019)]{Fu:2019}
G.~Fu.
\newblock An explicit divergence-free {DG} method for incompressible flow.
\newblock \emph{Comput. Methods Appl. Mech. Eng.}, 345:\penalty0 502--517,
  2019.
\newblock \doi{10.1016/j.cma.2018.11.012}.

\bibitem[Fu and Lehrenfeld(2018)]{Fu:2018}
G.~Fu and C.~Lehrenfeld.
\newblock A strongly conservative hybrid {DG}/mixed {FEM} for the coupling of
  {S}tokes and {D}arcy flow.
\newblock \emph{J. Sci. Comput.}, 2018.
\newblock \doi{10.1007/s10915-018-0691-0}.

\bibitem[Girault and Rivi\`{e}re(2009)]{Girault:2009}
V.~Girault and B.~Rivi\`{e}re.
\newblock {DG} approximation of coupled {N}avier--{S}tokes and {D}arcy
  equations by {B}eaver--{J}oseph--{S}affman interface condition.
\newblock \emph{SIAM J. Numer. Anal.}, 47\penalty0 (3):\penalty0 2052--2089,
  2009.
\newblock \doi{10.1137/070686081}.

\bibitem[Girault et~al.(2013)Girault, Kanschat, and Rivi{\`e}re]{Girault:2013}
V.~Girault, G.~Kanschat, and B.~Rivi{\`e}re.
\newblock On the coupling of incompressible {S}tokes or {N}avier--{S}tokes and
  {D}arcy flows through porous media.
\newblock In \emph{Modelling and simulation in fluid dynamics in porous media},
  pages 1--25. Springer, 2013.

\bibitem[Girault et~al.(2014)Girault, Kanschat, and Rivi\`{e}re]{Girault:2014}
V.~Girault, G.~Kanschat, and B.~Rivi\`{e}re.
\newblock Error analysis for a monolithic discretization of coupled {D}arcy and
  {S}tokes problems.
\newblock \emph{J. Numer. Math.}, 22\penalty0 (2):\penalty0 109--142, 2014.
\newblock \doi{10.1515/jnma-2014-0005}.

\bibitem[Hansbo and Larson(2002)]{Hansbo:2002}
P.~Hansbo and M.~G. Larson.
\newblock Discontinuous {G}alerkin methods for incompressible and nearly
  incompressible elasticity by {N}itsche's method.
\newblock \emph{Comput. Methods Appl. Mech. Engrg.}, 191:\penalty0 1895--1908,
  2002.
\newblock \doi{10.1016/S0045-7825(01)00358-9}.

\bibitem[Howell and Walkington(2011)]{Howell:2011}
J.~S. Howell and N.~J. Walkington.
\newblock Inf-sup conditions for twofold saddle point problems.
\newblock \emph{Numer. Math.}, 118:\penalty0 663--693, 2011.
\newblock \doi{10.1007/s00211-011-0372-5}.

\bibitem[John et~al.(2017)John, Linke, Merdon, Neilan, and Rebholz]{John:2017}
V.~John, A.~Linke, C.~Merdon, M.~Neilan, and L.~G. Rebholz.
\newblock On the divergence constraint in mixed finite element methods for
  incompressible flows.
\newblock \emph{SIAM Rev.}, 59\penalty0 (3):\penalty0 492--544, 2017.
\newblock \doi{10.1137/15M1047696}.

\bibitem[Kanschat and Rivi\`{e}re(2010)]{Kanschat:2010}
G.~Kanschat and B.~Rivi\`{e}re.
\newblock A strongly conservative finite element method for the coupling of
  {S}tokes and {D}arcy flow.
\newblock \emph{J. Comput. Phys.}, 229\penalty0 (17):\penalty0 5933--5943,
  2010.
\newblock \doi{10.1016/j.jcp.2010.04.021}.

\bibitem[Kirk and Rhebergen(2019)]{Kirk:2019}
K.~Kirk and S.~Rhebergen.
\newblock Analysis of a pressure-robust hybridized discontinuous {G}alerkin
  method for the stationary {N}avier--{S}tokes equations.
\newblock \emph{J. Sci. Comput.}, 81:\penalty0 881--897, 2019.
\newblock \doi{10.1007/s10915-019-01040-y}.

\bibitem[Lehrenfeld and Sch\"oberl(2016)]{Lehrenfeld:2016}
C.~Lehrenfeld and J.~Sch\"oberl.
\newblock High order exactly divergence-free hybrid discontinuous {G}alerkin
  methods for unsteady incompressible flows.
\newblock \emph{Comput. Methods Appl. Mech. Engrg.}, 307:\penalty0 339--361,
  2016.
\newblock \doi{10.1016/j.cma.2016.04.025}.

\bibitem[Linke(2012)]{Linke:2012}
A.~Linke.
\newblock A divergence-free velocity reconstruction for incompressible flows.
\newblock \emph{C.R. Math. Acad. Sci. Paris}, 350:\penalty0 837--840, 2012.
\newblock \doi{10.1016/j.crma.2012.10.010}.

\bibitem[Linke(2014)]{Linke:2014}
A.~Linke.
\newblock On the role of the {H}elmholtz decomposition in mixed methods for
  incompressible flows and a new variational crime.
\newblock \emph{Comput. Methods Appl. Mech. Engrg.}, 268:\penalty0 782--800,
  2014.
\newblock \doi{10.1016/j.cma.2013.10.011}.

\bibitem[Rhebergen and Wells(2018)]{Rhebergen:2018a}
S.~Rhebergen and G.~N. Wells.
\newblock A hybridizable discontinuous {G}alerkin method for the
  {N}avier--{S}tokes equations with pointwise divergence-free velocity field.
\newblock \emph{J. Sci. Comput.}, 76\penalty0 (3):\penalty0 1484--1501, 2018.
\newblock \doi{10.1007/s10915-018-0671-4}.

\bibitem[Rhebergen and Wells(2020)]{Rhebergen:2020}
S.~Rhebergen and G.~N. Wells.
\newblock An embedded--hybridized discontinuous {G}alerkin finite element
  method for the {S}tokes equations.
\newblock \emph{Comput. Methods Appl. Mech. Engrg.}, 358:\penalty0 112619,
  2020.
\newblock \doi{10.1016/j.cma.2019.112619}.

\bibitem[Saffman(1971)]{Saffman:1971}
P.~Saffman.
\newblock On the boundary condition at the surface of a porous media.
\newblock \emph{Stud. Appl. Math.}, 50:\penalty0 292--315, 1971.

\bibitem[Sch\"oberl(1997)]{Schoberl:1997}
J.~Sch\"oberl.
\newblock An advancing front 2{D}/3{D}-mesh generator based on abstract rules.
\newblock \emph{J. Comput. Visual Sci.}, 1\penalty0 (1):\penalty0 41--52, 1997.
\newblock \doi{10.1007/s007910050004}.

\bibitem[Sch\"oberl(2014)]{Schoberl:2014}
J.~Sch\"oberl.
\newblock {C}++11 implementation of finite elements in {NGS}olve.
\newblock Technical Report ASC Report 30/2014, Institute for Analysis and
  Scientific Computing, Vienna University of Technology, 2014.
\newblock URL
  \url{http://www.asc.tuwien.ac.at/~schoeberl/wiki/publications/ngs-cpp11.pdf}.

\bibitem[Wang and Ye(2007)]{Wang:2007}
J.~Wang and X.~Ye.
\newblock New finite element methods in computational fluid dynamics by
  {H(div)} elements.
\newblock \emph{SIAM J. Numer. Anal.}, 45\penalty0 (3):\penalty0 1269--1286,
  2007.
\newblock \doi{10.1137/060649227}.

\bibitem[Wells(2011)]{Wells:2011}
G.~N. Wells.
\newblock Analysis of an interface stabilized finite element method: the
  advection-diffusion-reaction equation.
\newblock \emph{SIAM J. Numer. Anal.}, 49\penalty0 (1):\penalty0 87--109, 2011.
\newblock \doi{10.1137/090775464}.

\end{thebibliography}
\end{document}